\newtheorem{proposition}{\sc Proposition}[section]
\newtheorem{lemma}[proposition]{\sc Lemma}
\newtheorem{theorem}[proposition]{\sc Theorem}
\newtheorem{definition}[proposition]{\sc Definition}
\theoremstyle{definition}
\theoremstyle{remark}
\newtheorem{remark}[proposition]{\sc Remark}
\newcommand{\can}{\operatorname{can}}
\newcommand{\alg}{\operatorname{Alg}}
\newcommand{\id}{\operatorname{id}}
\newcommand{\Ker}{\operatorname{Ker}}
\renewcommand{\ker}{\operatorname{Ker}}
\def\sw#1{{\sb{(#1)}}}
\def\eps{{\epsilon}}
\newcommand{\red}{\mbox{${}_B\!\mathop{\mbox{\rm Red}}^{H/J}(P)$}}
\newcommand{\ot}{\otimes}
\renewcommand{\phi}{\varphi}
\renewcommand{\epsilon}{\varepsilon}
\def\lco{\!\!\!\phantom{I}^{co H/J}\!H}
\def\o{\sp{[1]}}
\def\t{\sp{[2]}}
\renewcommand{\[}{\begin{equation}}
\renewcommand{\]}{\end{equation}}
\newcommand{\proj}[2]{\mathbb{#1}P_\mathcal{T}^#2}
\newcommand{\comul}{\Delta}
\newcommand{\counit}{\varepsilon}
\newcommand{\co}{\,\mathrm{co}\,}
\newcommand{\ev}{\mathrm{ev}}
\def\C{{\Bbb C}}
\def\R{{\Bbb R}}
\def\im{{\rm Im}}
\def\id{{\rm id}}
\newcommand{\Sta}{{}^{\langle 1\rangle}}
\newcommand{\Stb}{{}^{\langle 2\rangle}}
\newcommand{\ls}[1]{\ell(#1)^{\langle 1\rangle}}
\newcommand{\rs}[1]{\ell(#1)^{\langle 2\rangle}}
\newcommand{\mZ}{{\mathbb{Z}}}
\newcommand{\tplz}{{\mathcal{T}}}
\newcommand{\ztwo}{{\mathbb Z/2\mathbb Z}}
\newcommand{\qcube}{{C(S^2_{\B R\mathcal{T}})}}
\newcommand{\B}[1]{{\mathbb #1}}
\newcommand{\qcp}[1]{{C(\B R P^{#1}_\mathcal{T})}}
\begin{document}
\numberwithin{proposition}{section}
\numberwithin{equation}{section}
\baselineskip=16pt
\author{Piotr~M.~Hajac}
\address{Instytut Matematyczny, Polska Akademia Nauk, ul.~\'Sniadeckich 8, Warszawa, 00--656 Poland} 
\email{pmh@impan.pl}
\author{Jan Rudnik}
\address{Instytut Matematyczny, Polska Akademia Nauk, ul.~\'Sniadeckich 8, Warszawa, 00--656 Poland}
\email{yarood@gmail.com}
\author{Bartosz~Zieli\'nski}
\address{Department of Computer Science, Faculty of Physics and Appplied Informatics, University of \L{}\'od\'z, Pomorska 149/153,
 \L{}\'od\'z, 90--236 Poland}
\email{bartosz.zielinski@uni.lodz.pl}
\title[Reductions of piecewise-trivial  
comodule algebras]{{\Large Reductions of piecewise-trivial}\\
\vspace*{2mm} 
{\Large principal comodule algebras}}
\maketitle
\begin{abstract}
Let $G'$ be a closed subgroup of a topological group $G$. A principal 
$G$-bundle $X$ is reducible to a locally trivial principal 
$G'$-bundle $X'$
if and only if there exists a local trivialisation of $X$ such that all 
transition functions take values in~$G'$.
We prove a noncommutative-geometric counterpart of this theorem. 
To this end, we employ the concept of a piecewise-trivial principal 
comodule algebra as a  
replacement of a locally trivial compact principal bundle. 
To illustrate  our theorem, first we define a new noncommutative
deformation of the $\mathbb{Z}/2\mathbb{Z}$-principal bundle
$S^2\rightarrow \mathbb{R}P^2$ that yields a piecewise-trivial
principal comodule algebra. It is the C*-algebra of a quantum cube
whose each face is given by the Toeplitz algebra. The 
$\mathbb{Z}/2\mathbb{Z}$-invariant subalgebra defines the C*-algebra
of a quantum $\mathbb{R}P^2$. It is given as a triple-pullback of
Toeplitz algebras. Next, we prolongate this noncommutative 
$\mathbb{Z}/2\mathbb{Z}$-principal bundle to a noncommutative
$U(1)$-principal bundle, so that the former becomes a reduction of
the latter thus instantiating our theorem. Moreover, using K-theory results, 
we prove that
the prolongated noncommutative bundle is not trivial.
%To enclose natural and geometrically interesting noncommutative 
%examples, 
%we use smash products (cocycle-free crossed products) 
%rather than tensor products as a generalisation of trivial principal 
%bundles. 
%These examples serve as a testing ground for our reduction theorem.
\end{abstract}

\tableofcontents

\section{Introduction and preliminaries}
\noindent
The concept of a reduction of a principal bundle is a fundamental tool of gauge theory. It is  crucial 
in topology as a measurement of reducibility/non-triviality of principal bundles, and 
pivotal in differential geometry
because many important  
structures on manifolds
can be formulated as reductions of their frame bundles. 
For instance, 
an orientation, a volume form and a metric  on a manifold $M$
correspond to reductions of the frame bundle $FM$ to a 
$GL_+(n,\R)$-, $SL(n,\R)$- and $O(n,\R)$-bundle, 
respectively (see \cite{kn63} for more details).  

It also plays an important role in physics
where physical fields as well as twistors are considered as sections of fiber bundles. In particular,
a Higgs field is described as a section of a fiber bundle that corresponds to a principal-bundle reduction
via Theorem~\ref{hus} (see~\cite{nr84}). Furthermore, the idea of holonomy
links reductions of principal bundles with physics: principal bundles are always reducible to holonomy
subbundles~\cite{kn63} and  
the Aharonov--Bohm experiment indicates that it is holonomy rather than  a Yang--Mills potential (i.e.\ 
connection) or a field strength (i.e.\ curvature) that corresponds directly to a given physical situation 
and can be measured~\cite{ct86,a-j83,a-j86}.

In noncommutative geometry, a quantum-group gauge theory is based on the concept of a compact
quantum principal bundle whose algebraic backbone is a principal comodule algebra (see \cite{bdh17}
and references therein). In this Hopf-algebraic framework, reductions of quantum principal bundles
are handled through the Hopf--Galois
Reduction Theorem \cite{s-p99,g-r99,qsng} which remarkably recovers classical 
Theorem~\ref{hus} by establishing the equivalence
of reduction ideals $I$ and appropriate equivariant algebra homomorphism.
The latter have a geometric meaning of global sections of the fibre
bundle associated to a principal $G$-bundle via the canonical action
$G\times G/G'\rightarrow G/G'$, where $G'$ is a reducing subgroup of $G$.
They turn out to be far more manageable than reduction ideals, and allow us to unravel
reductions of piecewise-trivial principal comodule algebras~\cite{HKMZ}.

The aim of this article is to provide a critirion for a reducibility
of piecewise-trivial principal comodule algebras. More precisely, given
a Hopf algebra $H$ with bijective antipode, an appropriate Hopf ideal 
$J$, and 
%a finite family of smash
% products $\{B_i\rtimes H\}_i$ assembled into a multipullback 
a principal $H$-comodule
algebra $P$, we claim that:

\noindent {\sc Theorem} 
{\em 
There exists an
ideal $I\subseteq P$ such that $P/I$ is a piecewise-trivial
principal $H/J$-comodule algebra if and only if there exists a 
piecewise trivialisation of $P$ (with respect to the same covering) such that all the associated transition
functions anihilate $J$, and its associated actions of $J$ on the algebras 
covering the subalgebra of coaction invariants are 
trivial.} 

Our paper is in the Hopf-algebraic framework of unital algebras. In this framework, or in its coalgebraic
extension~\cite{bh99,bh04,z-b05}, noncommutative fiber bundles received recently much 
attention~\cite{bs19a,bs19b,bs19c,ms20}. This algebraic setting forces us to consider
noncommutative analogues of piecewise triviality (finite closed coverings) 
of compact principal bundles rather than their local triviality (arbitrary open coverings).
As explained in detail in \cite{bhms07}, the former concept is more general than the latter when
going beyond Lie groups. Only recently, ideas from the celebrated Elliott's classification program
of C*-algebras (the Rokhlin dimension) 
allowed the breakthrough of discovering a working definition of the local triviality of compact 
quantum principal bundles~\cite{ghtw18}. Formulating and proving our main reduction theorem
in the C*-algebraic framework of locally trivial compact quantum principal bundles, linked to
Hopf--Galois theory via the Peter--Weyl functor~\cite{bdh17}, is a key research problem stemming from
this paper. Finally, let us also mention that, in the spirit of localizations as in algebraic geometry, 
locally cleft and locally trivial Hopf--Galois extensions were studied in~\cite{r-d98,afl19}.

We work over a fixed ground field~$k$. The unadorned tensor product stands
for the tensor product over this filed. The comultiplication, the counit
and the antipode of a Hopf algebra $H$  are denoted by $\Delta$, 
$\varepsilon$ and $S$, respectively. Our standing assumption is that $S$
is invertible. 
A right $H$-comodule algebra $P$ is a unital associative algebra  equipped with an $H$-coaction 
$ \Delta_P : P \rightarrow P \otimes H$ that is an algebra map.  
For a comodule algebra $P$, we call 
\begin{equation}
P^{\co H}:=\left\{p \in P\,|\,\Delta_P(p)=p \otimes 1\right\}
\end{equation}
the subalgebra of coaction-invariant elements in $P$. A left coaction
on $V$ is denoted by ${}_V\Delta$.
For comultiplications and coactions, 
we often employ the Heynemann-Sweedler
notation with the summation symbol suppressed: 
\begin{equation}
\Delta(h)=:h_{(1)}\otimes h_{(2)},\quad
\Delta_P(p)=:p_{(0)}\otimes p_{(1)},\quad
{}_V\Delta(v)=:v_{(-1)}\otimes v_{(0)}.
\end{equation}
The convolution product of $f$ and $g$ is denoted by
\begin{equation}
(f*g)(h):=f(h_{(1)})g(h_{(2)}).
\end{equation}
 Finally, 
we use the convention that ${}_A^C\mathrm{Hom}_B^D$ signifies 
$k$-linear homomorphisms that are left $A$-linear, right $B$-linear,
left $C$-colinear and right $D$-colinear.
If $M$ is a right comodule over a coalgebra $C$ and $N$ is a left $C$-comodule,
then we define their {\em cotensor product} as
\[
M\overset{C}{\Box}N:=\{t\in M\otimes N\;|\;(\Delta_M\otimes\id)(t)=
(\id\otimes{}_N\Delta)(t)\}.
\]
In particular, for a right $H$-comodule algebra $P$ and a left 
$H$-comodule $V$, we observe that $P\Box^H V$ is a left $P^{\co H}$-module
in a natural way.

Following this introduction, we proceed to preliminaries concerning reductions and prolongations
both for classical principal bundles and for principal comodule algebras. Then we move to the main section
containing the multi-lemma 
proof of our main theorem. We end the paper with two sections devoted to geometrically
motivated
examples of reductions of piecewise-trivial principal comodule algebras. The first example is built
on new noncommutative deformations of the real projective plane and the two-dimensional sphere.
More precisely, replacing the C*-algebra of functions on the unit disc by the Toeplitz algebra
viewed as the C*-algebra of functions on a quantum disc~\cite{kl93}, we define a triple-pullback 
noncommutative deformation of the principal $U(1)$-bundle
\[
S^2\underset{\ztwo}{\times}U(1)\longrightarrow \mathbb{R}P^2,
\]
and prove that it is a non-trivial but piecewise trivial comodule algebra reducible to the triple-pullback 
noncommutative deformation of the principal $\ztwo$-bundle $S^2\to\mathbb{R}P^2$. The second example
shows that, unlike in the classical setting, a trivial principal comodule algebra need not be always reducible.

\subsection{Reductions, prolongations and the local triviality
 of classical principal bundles} 

Let $X\rightarrow M$ be a principal $G$-bundle over $M$, and $G'$ be 
a closed subgroup of $G$. A $G'$-{\em reduction} of $X\rightarrow M$
is a subbundle $X'\subseteq X$ 
over $M$ that is a principal $G'$-bundle over $M$ via the restriction
of the $G$-action on $X$. Reductions of a principal bundle to a given subgroup need not exist,
and when they exist, they are, typically, highly non-unique. In particular, a reduction to the trivial
subgroup is tantamount to a global section. Only trvializable principal bundles admit  global sections,
and there are as many global sections as trivializations.
\begin{proposition}\label{trivial}
A principal $G$-bundle $X$ is isomorphic as a $G$-space with
$X/G\times G$ if and only if there exists a continuous $G$-equivariant map $\Phi:X\rightarrow G$.
Then the isomorphism is given explicitly by
\begin{gather}
X\ni x\longmapsto([x],\Phi(x))\in X/G\times G,\quad X/G\times G\ni ([x],g)\longmapsto x\Phi(x)^{-1}g\in X.
\end{gather}
\end{proposition}

Much like differentiation is the inverse operation to integration, which is much easier and more algorithmic
than integration, the prolongation of a principal bundle is the always possible and uniqely determined
inverse operation to a reduction. More precisely, if $X'\to M$ is a principal $G'$-bundle, and 
$G'$ is a closed subgroup of $G$, then the \emph{$G$-prolongation} of $X'$ is the principal $G$-bundle
\begin{equation}
X'\underset{G'}{\times}G:=(X'\times G)/G',\quad \text{where\ }\forall\;x\in X', g\in G, h\in G'\colon
 (x,g)h:=(xh,h^{-1}g).
\end{equation}

For instance, the Ehresmann groupoid $G\times_{G'}G$  can be thought of as the $G$-prolongation of 
$G$ treated as the
principal $G'$-bundle. It is trivializable as a $G$-bundle due to  Proposition~\ref{trivial} because
we can define  
\begin{equation}
\Phi\colon G\underset{G'}{\times}G\ni [(g,h)]\longmapsto gh\in G.
\end{equation} 

Now we can state:
\begin{proposition}
Let $G'$ be a closed subgroup of $G$. Assume that  a principal $G$-bundle $X\to M$ is reducible 
to a principal $G'$-bundle $X'\to M$.
Then 
\begin{gather}
X\ni x\longmapsto [(x',g)]\in X'\underset{G'}{\times}G,\quad \text{where\ } x'g=x,\label{isoeresman}\\
X'\underset{G'}{\times}G\ni [(x',g)]\longmapsto x'g,
\end{gather}
is a pair of mutually inverse gauge isomorphisms ($G$-equivariant homeomorphisms inducing identity
on $M$).
\end{proposition}

Next, recall that reductions of principal bundles are classified 
by  global sections 
of appropriate associated fibre bundles \cite[Theorem~2.3]{h-d94} (see Theorem~\ref{hus}). 
More precisely, a $G$-principal bundle $X\rightarrow M$ 
can be reduced to a $G'$-subbundle if and only if there exists 
a global section of the associated fibre bundle $X/G'\rightarrow M$. There is a natural way 
to provide a one-to-one correspondence between the $G'$-reductions of $X$ and the global sections of 
$X/G'$. This correspondence 
 supports the geometric intuition of a  $G'$-subbundle as a $G'$-thick global section 
of~$X$. 
%Furthermore, the group 
%inverse allows us to identify $G/G'$ with $G'\backslash G$ and $G$-equivariant maps into $G/G'$ 
%with $G$-equivariant maps into $G'\backslash G$: $f:X\rightarrow G'\backslash G$, $f(xg)=f(x)g$. 
\begin{theorem}\label{hus}
Let $G'$ be a closed subgroup of $G$. 
A principal $G$-bundle $X$ is reducible to a principal $G'$-bundle $X'$ if and only if
there exists a continuous $G$-equivariant map \mbox{$f:X\rightarrow G'\backslash G$}.
Explicitly, given the map $f$, the reduced subbundle can be recovered as 
\begin{equation*}
X':=f^{-1}([e]). 
\end{equation*}
Vice versa, having a $G'$-reduction $X'$, we can construct an appropriate map 
$f$ by composing the isomorphism
\eqref{isoeresman} with the projection on the second component and the quotient map:
\begin{equation*}
X\ni x\longmapsto [(x',g)]\longmapsto [g]\in G'\backslash G.
\end{equation*}
\end{theorem}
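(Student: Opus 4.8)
The plan is to prove the two implications separately, in each case by checking that the explicit recipes indicated in the statement actually work.

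For the ``only if'' direction, assume $X'\subseteq X$ is a $G'$-reduction. The preceding Lemma already supplies a gauge isomorphism $X\cong X'\times_{G'}G$, $x\mapsto[x',g]$ with $x'g=x$; composing it with the projection onto the second factor and the quotient map $G\to G'\backslash G$ yields the candidate map, which concretely reads $f(x):={}_{G'}[g]$ for any decomposition $x=x'g$ with $x'\in X'$ (such a decomposition exists because $X'$ meets every fibre and the $G$-action is transitive on fibres). First I would verify that $f$ is well defined: if $x'g=x=x''g''$ with $x',x''\in X'$, then $x'$ and $x''$ lie in the same fibre of $X'\to M$, so $x''=x'h$ for some $h\in G'$, whence $g=hg''$ by freeness of the $G$-action and ${}_{G'}[g]={}_{G'}[g'']$. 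Right $G$-equivariance is then immediate from $f(xg_0)=f\big(x'(gg_0)\big)={}_{G'}[gg_0]=\big({}_{G'}[g]\big)g_0=f(x)g_0$, and continuity follows by trivialising $X'$ over an open set $U\subseteq M$: a trivialisation $X'|_U\cong U\times G'$ induces $X|_U\cong(U\times G')\times_{G'}G\cong U\times G$, under which $f$ becomes the second projection followed by $G\to G'\backslash G$. Finally $f^{-1}([e])=X'$: if $x\in X'$ then the decomposition $x=x\cdot e$ gives $f(x)=[e]$, while $f(x)=[e]$ forces $g\in G'$ in any decomposition $x=x'g$, so $x\in X'$ since $X'$ is $G'$-invariant.

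For the ``if'' direction, given a right $G$-map $f:X\to G'\backslash G$, set $X':=f^{-1}([e])$. It is $G'$-invariant, since for $g'\in G'$ one has $f(xg')=f(x)g'=[e]g'=[g']=[e]$, and the induced $G'$-action on $X'$ is free because the $G$-action on $X$ is free. I would then check that $\pi|_{X'}\colon X'\to M$ is surjective and that every fibre $X'\cap\pi^{-1}(m)$ is a single $G'$-orbit: if $x\in\pi^{-1}(m)$ and $f(x)=[g]$, then $xg^{-1}\in X'$; and if $x_1,x_2\in X'\cap\pi^{-1}(m)$, then writing $x_2=x_1g$ with $g\in G$ uniquely determined, the identity $[e]=f(x_2)=f(x_1)g=[g]$ forces $g\in G'$.

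The genuinely substantive step is the local triviality of $X'\to M$, and it is exactly here that the closedness of $G'$ is used, via the fact that the quotient map $q\colon G\to G'\backslash G$ admits local sections. Concretely, fix $m\in M$, trivialise $X$ over an open set $U\ni m$ so that $X|_U\cong U\times G$, and put $\phi(u):=f(u,e)\in G'\backslash G$; then $G$-equivariance gives $f(u,g)=\phi(u)g$, so that $X'|_U=\{(u,g)\mid\phi(u)g=[e]\}$. Choosing a local section $s$ of $q$ defined on a neighbourhood of $\phi(m)$, shrinking $U$ so that $\phi(U)$ lies in the domain of $s$, and using continuity of $u\mapsto s(\phi(u))$, one checks that $(u,g')\mapsto\big(u,s(\phi(u))^{-1}g'\big)$ is a $G'$-equivariant homeomorphism $U\times G'\to X'|_U$ over $U$, with inverse $(u,g)\mapsto\big(u,s(\phi(u))g\big)$. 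This exhibits $X'\to M$ as a locally trivial principal $G'$-bundle, and by construction $X'=f^{-1}([e])$, completing the proof. I expect this last step to be the main obstacle: it is the only place where the topology of the inclusion $G'\subseteq G$ enters, and it requires care in matching the domain of the section of $q$ with the image of $\phi$.
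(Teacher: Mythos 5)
The paper itself gives no argument for this statement: it is quoted from Husemoller, and the only ``proof content'' in the paper is the pair of explicit formulas in the statement (the map $x\mapsto[(x',g)]\mapsto{}_{G'}[g]$ built from the gauge isomorphism \eqref{isoeresman}, and the recovery $X'=f^{-1}([e])$). Your verification of exactly these two recipes — well-definedness via freeness and the fact that fibres of $X'$ are single $G'$-orbits, equivariance, $f^{-1}([e])=X'$, and conversely $G'$-invariance, freeness, surjectivity onto $M$ and one-orbit fibres of $f^{-1}([e])$ — is correct and is the standard textbook argument, so in that sense you are fleshing out precisely the route the paper points to. Two remarks on economy: in the ``only if'' direction, continuity of $f$ is immediate from the gauge isomorphism $X\cong X'\times_{G'}G$ of the preceding Lemma composed with the (continuous, $G'$-invariantly induced) map $[x',g]\mapsto{}_{G'}[g]$; your detour through local trivialisations of $X'$ is unnecessary and quietly assumes $X'$ is locally trivial, which the paper's definition of a reduction (and this Theorem, as opposed to the later Proposition on transition functions) does not require.

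The one genuine over-claim is in your ``substantive step'': the assertion that closedness of $G'$ yields local sections of $q\colon G\to G'\backslash G$. That implication holds for Lie groups (closed subgroup theorem), which is the setting the classical examples of the paper live in, but it is false for general topological groups, and the Theorem here is stated for a closed subgroup of a topological group; in Husemoller's own treatment the existence of local cross-sections of $G\to G'\backslash G$ is an extra hypothesis (or one works with Lie groups), not a consequence of closedness. Note also that for the statement as the paper phrases it you do not need local triviality of $X'\to M$ at all — what closedness buys in the bare topological setting is, e.g., that $[e]$ is closed so $X'=f^{-1}([e])$ is a closed sub-$G'$-space — so your local-triviality argument proves more than is asked, at the price of an assumption that the stated hypotheses do not supply. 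Either restrict to the Lie/locally trivial context (and say so), or drop that step and prove only what the weaker notion of principal $G'$-bundle requires.
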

\noindent
The Hopf--Galois Reduction Theorem (Theorem~\ref{hogare}) is a noncommutative counterpart of the above
result.

All the foregoing discussion of reductions and prologations
is valid for arbitrary Cartan principal bundles~\cite{c-h67}, 
i.e.\ without the assumption of
local triviality. The local triviality of a principal bundle allows us to phrase its reducibility
 in terms of 
transition functions (cf.\ \cite{kn63}, Proposition~I.5.3):
\begin{theorem}
Let $G'$ be a closed subgroup of  $G$. A principal $G$-bundle $X\to M$ is reducible to 
a locally trivial principal $G'$-bundle $X'\to M$
if and only if there exists a local trivialisation of $X$ (with respect to the same covering as that of $X'$)
such that all transition functions take values in~$G'$.
\end{theorem}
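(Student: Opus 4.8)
The plan is to obtain the statement from the Proposition of \cite{kn63} (I.5.3) quoted above, of which it is a mild reformulation: there the reduction is matched against a fixed common covering, while here we only ask for the existence of \emph{some} local trivialisation of $X$ with $G'$-valued transition functions. So it suffices to prove the two implications directly, and the argument is the classical one phrased in terms of the lemmas already recorded.

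For the ``only if'' direction, assume $X$ is reducible to a locally trivial principal $G'$-bundle $X'\subseteq X$ over $M=X/G$. Pick a covering $\{U_i\}$ of $M$ with trivialisations $\psi'_i\colon\pi^{-1}(U_i)\cap X'\to U_i\times G'$, and let $g'_{ji}\colon U_i\cap U_j\to G'$ be the associated transition functions. By the gauge isomorphism \eqref{isoeresman} one has $X\cong X'\times_{G'}G$ as a $G$-space over $M$, and over each $U_i$ the local model gives $(U_i\times G')\times_{G'}G\cong U_i\times G$ via $[(u,g'),g]\mapsto(u,g'g)$. Composing these, we obtain local trivialisations $\psi_i\colon\pi^{-1}(U_i)\to U_i\times G$ of $X$ over the \emph{same} covering whose restriction to $X'$ is exactly $\psi'_i$ followed by the inclusion $U_i\times G'\hookrightarrow U_i\times G$. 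Hence the transition functions of $\{\psi_i\}$ agree with the $g'_{ji}$ and, in particular, take values in $G'$.

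For the ``if'' direction, suppose $X$ carries a local trivialisation $\{(\psi_i,U_i)\}$ all of whose transition functions $g_{ji}\colon U_i\cap U_j\to G$ take values in $G'$. Set $X':=\bigcup_i\psi_i^{-1}(U_i\times G')$. The key point is that this is consistent across overlaps: if $x\in\pi^{-1}(U_i\cap U_j)$ with $\psi_i(x)=(u,g)\in U_i\times G'$, then $\psi_j(x)=(u,g_{ji}(u)g)$ and $g_{ji}(u)g\in G'\cdot G'=G'$, so $x\in\psi_j^{-1}(U_j\times G')$ as well. Consequently each $\psi_i$ restricts to a homeomorphism $\pi^{-1}(U_i)\cap X'\to U_i\times G'$, so $X'$ is locally trivial with structure group $G'$ and transition functions $g_{ji}$; it is $G'$-invariant (the $G'$-action only moves the second coordinate inside $G'$) and it projects onto all of $M$. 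Therefore $X'\subseteq X$ is a $G'$-reduction of $X$.

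The only genuinely nonroutine point, and the one I would treat with care, is the well-definedness of $X'$ in the ``if'' direction, together with the check that the subspace topology and local structure inherited from $X$ coincide with those prescribed by the $G'$-bundle charts; everything else is a direct unwinding of \eqref{isoeresman} and of the definition of transition functions.
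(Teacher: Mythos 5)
Your proposal is correct, and it is essentially the argument the paper relies on: the paper states this theorem without proof, referring to the classical result (Kobayashi--Nomizu, Proposition I.5.3) whose standard proof is exactly your two-step scheme --- extend trivialisations of $X'$ to $X$ via the gauge isomorphism $X\cong X'\times_{G'}G$ of \eqref{isoeresman} in one direction, and recover $X'$ as $\bigcup_i\psi_i^{-1}(U_i\times G')$, glued consistently because the transition functions are $G'$-valued, in the other. Both implications check out, including the overlap-consistency point you single out, so there is nothing to add beyond noting that the closedness of $G'$ enters only through the quoted lemma giving \eqref{isoeresman}.
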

\noindent
A noncommutative-algebraic counterpart of this theorem, i.e.\ Theorem~\ref{mainres},
 is the main result of this paper. 
%We refer to it
%as the Hopf--Galois Reduction Theorem (Theorem~\ref{mainres}). 
We derive it by restricting the above
theorem to compact Hausdorff spaces and replacing local triviality with the more general concept of
piecewise triviality~\cite{bhms07} (finite closed coverings instead of arbitrary open coverings), 
then using the Peter--Weyl functor \cite{bhms07,bh14} to
translate everything into algebraic terms of Hopf--Galois 
theory, and finally proving it
for arbitrary principal comodule algebras.

In particular, the structure groups of trivial principal bundles can be reduced 
to arbitrary subgroups. However, a reduction of a trivial principal bundle need not be trivial.
For instance, the boundary of the M\"obius strip viewed as a non-trivial 
$\mathbb{Z}/2\mathbb{Z}$-bundle over the unit circle $S^1$  
can be obtained as a reduction of the trivial $ U(1) $-bundle over $S^1$. 
According to Theorem~\ref{hus}, the reductions of $S^1\times U(1)$ are in one to one correspondence with
continuous $U(1)$-equivariant maps $f:S^1\times U(1)\rightarrow (\mZ/2\mZ)\backslash U(1)$.
Let us consider two choices of such maps:
\begin{gather}
f_1:S^1\times U(1)\ni (s,u)\longmapsto [su]\in (\mZ/2\mZ)\backslash U(1),\\
f_2:S^1\times U(1)\ni (s,u)\longmapsto [s^{1/2}u]\in (\mZ/2\mZ)\backslash U(1).
\end{gather}
It is easy to verify that $f_1^{-1}([e])\cong S^1\times \mZ/2\mZ$. Explicitly,
$f_1^{-1}([e])=\{(\pm u,u^{-1})\;|\;u\in U(1)\}$, where we identify $S^1$ with $U(1)$.
Note that the action of $\mZ/2\mZ$ on $f_1^{-1}([e])$ sends an element of one circle $(u,u^{-1})$
to the element $(u,-u^{-1})=(-(-u),(-u)^{-1})$ which belongs to the other circle.
In the other case, $(s,u)\in f_2^{-1}([e])$ if and only if $s^{-1/2}u=\pm e$, i.e.\ $s=u^2$, whence
$f_2^{-1}([e])$ is homeomorphic with $S^1$, with an explicit formula given by $u\mapsto (u^2,u)$.
This time, the action of $\mZ/2\mZ$ sends $u$ to $-u$. 
It is easy to see that $S^1$ with this action
is the edge of the M\"obius strip viewed as a principal $\mZ/2\mZ$-bundle.

Therefore, one has to bear in mind that a local trivialisation
of a principal $G$-bundle $X$ when restricted to a reduced 
$G'$-subbundle $X'$ need not be a trivialisation of~$X'$.
The point here is that 
the principal bundle $U(1) \rightarrow U(1)/(\mathbb{Z}/2\mathbb{Z})$ 
is not trivial. 
Its triviality 
would be a sufficient condition for the triviality of the reduction:
\begin{proposition}\label{simple}
If $ G \rightarrow G/G'$ is trivial as a principal $G'$-bundle, then any 
$G'$-reduction of a~trivial 
$G$-bundle is trivial. 
\end{proposition}

\subsection{Reductions and prolongations of principal comodule algebras} 
\label{principalprel}
Let $H$ be a Hopf algebra, $P$ be a right $H$-comodule algebra, and 
let $B:=P^{\co H}$ be the coaction-invariant
subalgebra. The $H$-comodule algebra $P$ is called a {\em principal} 
\cite{bh04} iff: 
\begin{enumerate}
\item
the canonical map
$P{\otimes}_B P\ni p \otimes q \mapsto\can(p \otimes q):=
pq_{(0)} \otimes q_{(1)}\in P \otimes H$
is bijective,
\item
$\exists\; s\in {}_B\mathrm{Hom}^H(P,B\otimes P):\; m\circ s=\id$,
where $m$ is the multiplication map,
\item
the antipode of $H$ is bijective.
\end{enumerate}
Here (1) is the Hopf--Galois (freeness) condition, (2) means the equivariant
projectivity of $P$ (equivalent to faithful flatness), and (3) ensures a left-right symmetry of the
 definition (everything can be re-written for left comodule
algebras).
The inverse of the canonical map can be written explicitly using the
Heynemann-Sweedler like notation: 
$\can^{-1}(p\otimes h):=ph\o\otimes_B h\t$. The restriction of this map
\begin{equation}
H\ni h\longmapsto\can^{-1}(1\otimes h)
=:h\o\underset{B}{\otimes} h\t
\in P\underset{B}{\otimes}P
\end{equation}
is called the {\em translation map}. It enjoys the following property which
we will use later on:
\begin{equation}\label{TransProp}
h\o h\t=\counit(h).
\end{equation}
%For more details see, e.g., \cite{m-s93,s-hj94}. 

If $H$ is a Hopf algebra with bijective antipode and $P$ is 
a right $H$-comodule algebra, then one can show (cf.~\cite{bh04})
that it is
principal if and only if there exists a linear map
\begin{equation}
\ell:H\longrightarrow P\otimes P, \quad h\longmapsto
\ell(h)=:\ls{h}\otimes \rs{h},
\end{equation}
that, for all $h\in H$, satisfies:
\begin{gather}
\ls{h}\rs{h}\sw{0}\otimes\rs{h}\sw{1}=1\otimes h,\\
S(h\sw{1})\otimes\ls{h\sw{2}}\otimes \rs{h\sw{2}}=
\ls{h}\sw{1}\otimes\ls{h}\sw{0}\otimes\rs{h},\\
\ls{h\sw{1}}\otimes\rs{h\sw{1}}\otimes h\sw{2}
=\ls{h}\otimes\rs{h}\sw{0}\otimes\rs{h}\sw{1}.
\end{gather}
%$\ls{h}\rs{h}=\counit(h)$
Any such a map $\ell$ can be made unital \cite{bh04}. It is 
then called a {\em strong connection} \cite{h-pm96,dgh01,bh04}, 
and can be thought of as a unital bicolinear lifting of 
the translation map. To any strong connection we can associate the 
left $B$-linear right $H$-colinear splitting of the multiplication map:
\begin{equation}\label{splitting}
s:P\ni p\longmapsto p\sw0\ell(p\sw1)\Sta\otimes \ell(p\sw1)\Stb\in B\otimes P.
\end{equation}

A particular class of principal comodule algebras is distinguished by the 
existence of a cleaving map. A cleaving map is defined as a unital 
right $H$-colinear
convolution-invertible map \mbox{$j:H\rightarrow P$}. Having a cleaving map,
one can define a strong connection as
\[\label{cleftstrong}
\ell:=(j^{-1}\otimes j)\circ\Delta
,
\]
where $j^{-1}$ stands for the convolution inverse of~$j$.
Comodule algebras admitting
a cleaving map are called {\em cleft}. All modules associated with cleft
comodule algebras are always free. Also, one can show that a cleaving map is
automatically injective. Therefore, as the value of a cleaving map on
a group-like element is invertible, we can conclude that the existence of
a non-trivial group-like in $H$ necessitates the existence of an invertible
element in $P$ that is not a multiple of~$1$. Hence, one of the ways to prove
the non-cleftness of a principal comodule algebra over a Hopf algebra with a 
non-trivial group-like is to show the lack of non-trivial invertibles in
the comodule algebra.

If $j:H\rightarrow P$ is a right $H$-colinear algebra homomorphism, then
it is automatically convolution-invertible and unital. A cleft comodule
algebra admitting a cleaving map that is an algebra homomorphism is called
a {\em smash product}. All commutative smash products reduce to the
tensor algebra $P^{\co H}\otimes H$, so 
smash products play the role of trivial bundles. A cleaving map defines
a left action of $H$ on $P^{\co H}$ making it a left $H$-module algebra:
$h\triangleright p:=j(h_{(1)})pj^{-1}(h_{(2)})$. Conversely, if $B$ is
a left $H$-module algebra, one can construct a smash product 
$B\rtimes H$ by equipping the vector space $B\otimes H$ with 
the multiplication
\[
(a\otimes h)(b\otimes k):=a\,(h_{(1)}\triangleright b)\otimes h_{(2)}\,k, \ \ 
a,b\in B,\ h,k\in H,
\]
and coaction $\Delta_{B\rtimes H}:=\id\otimes\Delta$. Then a cleaving map
is simply given by $j(h)=1\otimes h$. Plugging it into the formula 
\eqref{cleftstrong} yields
 a  strong connection defined by
\begin{equation}
\label{smashstrong}
\ell:H\longrightarrow(B\rtimes H)\otimes(B\rtimes H),\quad 
h\longmapsto(1\otimes S(h\sw{1}))\otimes(1\otimes h\sw{2}).
\end{equation}

To end with, let us recall crucial facts about reductions and prolongations of principal comodule algebras.
\begin{definition}[\cite{g-r99,s-p99,qsng}]\label{red}
Let $P$ be a principal $H$-comodule  algebra with $B:=P^{\co H}$ and $J$
 be a Hopf ideal of $H$ such that $H$ is a principal left $H/J$-comodule
algebra. 
We say that an ideal $I$ of $P$ is a \emph{$J$-reduction}
 of $P$ iff the following conditions are satisfied:
\begin{enumerate}
\item $I$ is an $H/J$-subcomodule of $P$,
\item $P/I$ with the induced coaction is a principal  $H/J$-comodule algebra,
\item $(P/I)^{\co H/J}=B$.
\end{enumerate}
\end{definition}
\noindent
Losely speaking, $J$ plays the role of the ideal of functions vanishing on a subgroup and 
$I$ plays the role of the ideal of functions vanishing on a subbundle. Thus, $H/J$ works as the algebra of the 
reducing subgroup and $P/I$ works as the algebra of the reduced bundle. 
The coaction invariant subalgebra $B$ 
remains intact --- the base space of a subbundle coincides with the base space of the bundle. 

Prolongations of principal comodule algebras are given as cotensor products naturally describing
the associated fiber bundle construction~\cite{bhms07}. 
As in the classical case, if $P/I$ is a principal $H/J$-comodule algebra such that $I$ is a $J$-reduction, then
\begin{equation}
P/I\overset{H/J}{\Box}H\cong P\quad\text{as H-comodule algebras.}
\end{equation}

The space of all such $J$-reducing ideals we denote by 
${}_B\mbox{Red}^{H/J}(P)$. It can happen that this set contains only 
the zero ideal, as for a given non-zero $J$ there need not exist a reduction. If no 
non-zero $J$ admits a reduction, 
we say that the principal comodule algebra is {\em irreducible}. 
The thus defined reductions have a clear conceptual meaning but are
 difficult
to handle. Following the classical case (see Theorem~\ref{hus}), one can 
prove that they are equivalent to right
$H$-colinear 
algebra homomorphisms from the left coaction-invariant subalgebra 
${}^{co H/J}H$
to the centralizer subalgebra
\begin{equation}
Z_P(B):=\{p\in P\;|\; pb=bp,\;\forall\; b\in B\}
\end{equation}
 that are compatible with the
Miyashita--Ulbrich action.
 The latter condition (trivial in the commutative case) 
means that 
\begin{equation}
f(S(h\sw1)kh\sw2)=h\o f(k)h\t,\;\forall\; k\in\lco,\; h\in H.
\end{equation}
The space of all such homomorphisms we denote by
$\mbox{Alg}^H_H(\!\!\!\phantom{I}^{co H/J}\!H,Z_P(B))$.
Note that $S(h\sw1)kh\sw2\in \lco$ for all $k\in\lco$, $h\in H$.
%  Note also that
% $\alg^H_H(\lco,Z_P(B))$ is  an 
% \ga-space via the action $f\mapsto F\circ f$.

\begin{theorem}[Hopf--Galois Reduction \cite{g-r99,s-p99,qsng}]
\label{hogare}
Let $P$ be a 
principal $H$-comodule algebra, and $B:=P^{\co H}$.
Then the formulas
\begin{eqnarray}
\alg^H_H(\lco,Z_P(B))\ni f&\longmapsto& I_f:=Pf(\lco\cap\Ker\varepsilon)\in\red,\\
\red\ni I&\longmapsto& f_I\in\alg^H_H(\lco,Z_P(B)),\nonumber\\
f_I(k)&:=&S^{-1}(k)\o (i_B\circ\pi_I)(S^{-1}(k)\t),\\
i_B(\pi_I(b+x))&:=&b,\;\;\; i_B: (B\oplus I)/I\rightarrow B,\;\;\; b\in B,\;\;\; x\in I,\nonumber
\end{eqnarray}
define mutually inverse bijections.
\end{theorem}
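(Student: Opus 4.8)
The theorem packages two constructions, so the plan is to verify four things: that $f\mapsto I_f$ sends $\alg^H_H(\lco,Z_P(B))$ into $\red$, that $I\mapsto f_I$ sends $\red$ into $\alg^H_H(\lco,Z_P(B))$, and that the two assignments are mutually inverse, i.e.\ $f_{I_f}=f$ and $I_{f_I}=I$. Write $\bar H:=H/J$ with quotient map $\pi\colon H\to\bar H$, $L:=\lco$, and $L^+:=L\cap\Ker\counit$. I would lean throughout on four facts. (a)~The translation property \eqref{TransProp}, $h\o h\t=\counit(h)$, together with its colinearity companions governing the two legs, $\Delta_P(h\o)=h\sw2\o\otimes S(h\sw1)$ and $\Delta_P(h\t)=h\sw1\t\otimes h\sw2$. (b)~Because $P$ is principal, for every $z\in Z_P(B)$ and $q\in P$ one has the Miyashita--Ulbrich identity $zq=q\sw0\,q\sw1\o z\,q\sw1\t$, obtained by inserting $z$ into $\can^{-1}(\can(1\otimes q))=1\otimes_B q$, which is legitimate precisely because $z$ centralizes $B$. (c)~Right $H$-colinearity of $f$ reads $\Delta_P(f(k))=f(k\sw1)\otimes k\sw2$ with $k\sw1\in L$. (d)~Applying $\id\otimes\counit$ to the coinvariance relation $\pi(k\sw1)\otimes k\sw2=1\otimes k$ of $k\in L$ gives $\pi(k)=\counit(k)1$; hence $\pi$ kills $L^+$, so $f(L^+)\subseteq I_f$ and $\pi_{I_f}\circ f=\counit(\cdot)1$ on $L$.

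For the first direction I would first check that $I_f=Pf(L^+)$ is two-sided. It is a left ideal by construction; to see it is a right ideal, take $k\in L^+$, $q\in P$, and compute, using (b) and the Miyashita--Ulbrich compatibility $q\sw1\o f(k)q\sw1\t=f(S(q\sw1)kq\sw2)$, that $f(k)q=q\sw0\,f(S(q\sw1)kq\sw2)$; since $S(q\sw1)kq\sw2\in L^+$ (it stays in $L$ by the observation recorded just before the statement, and lies in $\Ker\counit$ because $\counit(k)=0$), this lies in $I_f$. Condition~\ref{red}(1), that $I_f$ is an $\bar H$-subcomodule, follows from (c): on a generator $(\id\otimes\pi)\Delta_P(f(k))=f(k\sw1)\otimes\pi(k\sw2)$, and after splitting off the counit part of $f(k\sw1)$ the only summand not manifestly in $I_f\otimes\bar H$ is $1\otimes\pi(k)$, which vanishes by (d). The substantive conditions are \ref{red}(2) and \ref{red}(3), treated below.

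For the second direction, the first point is that $f_I$ is even defined: one needs $\pi_I(S^{-1}(k)\t)$ to be $\bar H$-coinvariant so that $i_B$ applies. Using the right-leg coaction from (a) together with $\Delta\circ S^{-1}=(S^{-1}\otimes S^{-1})\circ\tau\circ\Delta$, one gets $\Delta_P(S^{-1}(k)\t)=S^{-1}(k\sw2)\t\otimes S^{-1}(k\sw1)$; projecting and invoking the coinvariance of $k$ in the form $\pi(S^{-1}(k\sw1))\otimes k\sw2=1\otimes k$ collapses the $\bar H$-leg to $1$, so $\pi_I(S^{-1}(k)\t)$ is coinvariant, and condition~\ref{red}(3), $(P/I)^{\co\bar H}=B$, lets $i_B$ act. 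That $f_I$ is a unital algebra map, right $H$-colinear, is then extracted from (a): the left-leg coaction $\Delta_P(S^{-1}(k)\o)=S^{-1}(k\sw1)\o\otimes k\sw2$ produces exactly the $k\sw1\otimes k\sw2$ pattern required for colinearity, while \eqref{TransProp} underlies multiplicativity and unitality. That the image lies in $Z_P(B)$ and that Miyashita--Ulbrich compatibility holds I would derive from these same leg-coaction identities combined with the coinvariance of the $B$-valued factor.

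The real weight sits in two places. The \emph{main obstacle} is condition~\ref{red}(2): that $P/I_f$ is again a \emph{principal} $\bar H$-comodule algebra. Here I would exhibit a strong connection on $P/I_f$ by transporting the strong connection $\ell$ of $P$ through $\pi_{I_f}\otimes\pi_{I_f}$, lifting elements of $\bar H$ to $H$ via a strong connection of $H$ over $\bar H$ (available since $H$ is assumed principal as a left $\bar H$-comodule algebra) and correcting by $f$, then verifying the three defining identities on the quotient; condition~\ref{red}(3), $(P/I_f)^{\co\bar H}=B$, I would establish alongside, the equivariant splitting \eqref{splitting} of $P$ descending to furnish both the injectivity $B\cap I_f=0$ and surjectivity onto the coinvariants. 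Finally, for $f_{I_f}=f$, the identity \eqref{TransProp} and $\pi_{I_f}\circ f=\counit(\cdot)1$ give $f_{I_f}(k)\equiv f(k)$ modulo $I_f$ at once; upgrading this congruence to equality in $Z_P(B)$ is the delicate step, which I would close by using right $H$-colinearity of both maps together with $B\cap I_f=0$ to force the colinear, $I_f$-valued difference to vanish. For $I_{f_I}=I$ I would prove both inclusions: $f_I(L^+)\subseteq I$ gives $I_{f_I}\subseteq I$, and the reverse follows by expressing a general element of $I$ through the translation map and the principality of $P/I$. I expect the strong-connection construction on $P/I_f$ to carry essentially all the difficulty.
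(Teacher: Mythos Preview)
The paper does not prove Theorem~\ref{hogare}. It is stated as a cited result (the bracket in the theorem head reads ``Hopf-Galois Reduction \cite{g-r99,s-p99,qsng}'') and no proof environment follows; the paper immediately proceeds to Section~\ref{redupt} and uses the theorem as a black box. So there is no ``paper's own proof'' to compare your sketch against.

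For what it is worth, your outline is broadly sensible and hits the expected pressure points: two-sidedness of $I_f$ via the Miyashita--Ulbrich action, the $\bar H$-subcomodule property via colinearity of $f$ and $\pi(L^+)=0$, well-definedness of $f_I$ via the leg-coaction identities of the translation map, and the identification of the principality of $P/I_f$ as the main obstacle. If you want to check the details against an actual proof, you would need to consult \cite{g-r99}, \cite{s-p99}, or \cite{qsng} rather than this paper.
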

\noindent
One can treat this theorem as a source of the alternative definition of reductions of principal 
comodule algebras given as elements of $\alg^H_H(\lco,Z_P(B))$.
 
\section{Reductions of piecewise-trivial comodule algebras}\label{redupt}

\subsection{Piecewise triviality revisited}

%\begin{definition} (cf. \cite[Definition~3.7]{HKMZ})
A family of surjective algebra homomorphisms\linebreak
\mbox{$\{\pi_i:P\rightarrow P_i\}_{i\in\{1,\ldots,N\}}$} is 
called a {\em covering} \cite[Definition~3.6]{HKMZ} when
\begin{enumerate}
\item $\bigcap_{i\in\{1,\ldots,N\}}\ker\pi_i=\{0\}$,
\item the family of ideals $(\ker\pi_i)_{i\in\{1,\ldots,N\}}$ generates a distributive lattice with 
$+$ and $\cap$ as meet and join, respectively.
\end{enumerate}
%\end{definition}

Next,
let $\{\pi_i:P\rightarrow P_i\}_{i}$ be a covering. We define
the family of canonical surjections
\begin{equation}\label{pij}
\pi^i_j:P_i\rightarrow P/(\ker\pi_i+\ker\pi_j),
\quad \pi_i(p)\mapsto p+\ker\pi_i+\ker\pi_j,
\end{equation}
and denote by $P^c$ the multipullback of $P_i$'s along $\pi^i_j$'s: 
\begin{equation}\label{pcdef}
P^c:=\{(p_i)_i\in\bigoplus_iP_i\;|\;\pi^i_j(p_i)=\pi_i^j(p_j)\}.
\end{equation}
The following Proposition states the relationship between $P$ and $P^c$. 
\begin{proposition}[\cite{CM00}] \label{pullisom2}%(cf. \cite[Proposition~2]{CM00}) 
Let $\{\pi_i:P\rightarrow P_i\}_{i\in\{1,\ldots,N\}}$ be a covering. 
Then the map 
\begin{equation}\label{pullisom}
\chi:P\longrightarrow P^c,\quad p\longmapsto(\pi_i(p))_i\,,
\end{equation}
is an algebra isomorphism. (If $P$ and all the $P_i$'s are $H$-comodule
 algebras for some 
Hopf algebra $H$, and all the $\pi_i's$ are  colinear,
 then so is $\chi$.)
\end{proposition}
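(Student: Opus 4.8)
The plan is to check that $\chi$ is a well-defined algebra homomorphism, deduce injectivity from axiom~(1) of a covering, and then extract surjectivity from an induction on $N$ that is driven by the distributivity in axiom~(2). Throughout I would abbreviate $I_i:=\ker\pi_i$. First I would observe that $\chi$ lands in $P^c$: for $p\in P$ one has $\pi^i_j(\pi_i(p))=p+I_i+I_j=\pi^j_i(\pi_j(p))$ because $I_i+I_j=I_j+I_i$, so the compatibility relations cutting out $P^c$ hold automatically. Being a tuple of algebra maps, $\chi$ is an algebra map, and the colinear addendum is then purely formal: if each $\pi_i$ is $H$-colinear, then $\Delta_{P_i}\circ\pi_i=(\pi_i\otimes\id)\circ\Delta_P$ for every $i$, and since $P^c$ carries the coaction restricted from $\prod_iP_i$, this says exactly that $\chi$ is colinear. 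Injectivity is immediate, since $\chi(p)=0$ forces $p\in\bigcap_iI_i=\{0\}$ by axiom~(1).

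The substance is surjectivity. Fix $(p_i)_i\in P^c$ and, using surjectivity of each $\pi_i$, choose lifts $\tilde p_i\in P$ with $\pi_i(\tilde p_i)=p_i$; the defining relations of $P^c$ then read $\tilde p_i-\tilde p_j\in I_i+I_j$ for all $i,j$. For $N=1$ axiom~(1) makes $\pi_1$ an isomorphism and there is nothing to prove. For the inductive step I set $K:=I_1\cap\dots\cap I_{N-1}$. Each $\pi_i$ with $i\le N-1$ factors as a surjection $\bar\pi_i:P/K\to P_i$ with $\ker\bar\pi_i=I_i/K$, and since $\bigcap_{i\le N-1}(I_i/K)=\{0\}$ while the sublattice generated by the $I_i/K$ is the image of a distributive sublattice under the lattice isomorphism $A\mapsto A/K$ (defined on ideals containing $K$, all of which do), the family $\{\bar\pi_i\}_{i\le N-1}$ is again a covering. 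The induction hypothesis then produces $q\in P/K$, with a lift $\hat q\in P$, satisfying $\pi_i(\hat q)=p_i$ for all $i\le N-1$.

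It remains to correct $\hat q$ on the last patch without disturbing the first $N-1$. I would search for $p=\hat q+k$ with $k\in K$ and $p\equiv\tilde p_N\pmod{I_N}$; since adding an element of $K=\bigcap_{i\le N-1}I_i$ leaves all of $\pi_1(\hat q),\dots,\pi_{N-1}(\hat q)$ unchanged, any such $p$ solves the problem. The existence of a suitable $k$ amounts to $\tilde p_N-\hat q\in K+I_N$, and here distributivity enters decisively through the identity
\begin{equation*}
K+I_N=\bigl(I_1\cap\dots\cap I_{N-1}\bigr)+I_N=(I_1+I_N)\cap\dots\cap(I_{N-1}+I_N),
\end{equation*}
so it suffices to verify $\tilde p_N-\hat q\in I_i+I_N$ for each $i\le N-1$. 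Writing $\tilde p_N-\hat q=(\tilde p_N-\tilde p_i)+(\tilde p_i-\hat q)$, the first summand lies in $I_i+I_N$ by compatibility of the lifts, while the second lies in $I_i$ because $\pi_i(\hat q)=p_i=\pi_i(\tilde p_i)$. This closes the induction and shows $\chi$ surjective, hence bijective.

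I expect the one genuinely delicate point to be the bookkeeping in the inductive step: isolating the role of distributivity in the single identity $K+I_N=\bigcap_{i\le N-1}(I_i+I_N)$, and confirming that the quotient family $\{\bar\pi_i\}_{i\le N-1}$ really satisfies both covering axioms in $P/K$ so that the induction hypothesis applies. Everything else — well-definedness, the homomorphism property, injectivity, and the colinear statement — is routine.
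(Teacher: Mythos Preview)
Your argument is correct. The paper does not supply its own proof of this proposition; it is stated with a citation to \cite{CM00} and used as a black box thereafter, so there is nothing to compare against beyond confirming that your reasoning stands on its own.

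One small point worth making explicit in the inductive step: before invoking the induction hypothesis, you should note that the truncated tuple $(p_1,\dots,p_{N-1})$ actually lies in the multipullback associated to the covering $\{\bar\pi_i:P/K\to P_i\}_{i\le N-1}$. This is immediate once you observe that the maps $\bar\pi^i_j:P_i\to (P/K)/(I_i/K+I_j/K)$ are identified, via $(P/K)/((I_i+I_j)/K)\cong P/(I_i+I_j)$, with the original $\pi^i_j$ for $i,j\le N-1$, so the compatibility conditions are inherited from $(p_i)_i\in P^c$. Everything else---well-definedness, injectivity, the distributive identity $K+I_N=\bigcap_{i\le N-1}(I_i+I_N)$, and the correction $p=\hat q+k$---is handled cleanly.
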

\noindent
The isomorphism~\eqref{pullisom} is what makes the notion of the covering
 so much useful, as it often allows us to glue the properties of the parts of $P$ (the $P_i$'s) into the properties 
of the whole~$P$.

We recall now the notion of a quantum version of piecewise triviality of the bundle (which is like local triviality, 
but with respect to closed subsets):

\begin{definition}[{\cite[Definition~3.8]{HKMZ}}] \label{PiecePrinc}
An $H$-comodule algebra $P$ is called piecewise trivial
 if there exists a 
family of surjective $\{\pi_i:P\rightarrow P_i\}_{i\in\{1,\ldots,N\}}$ 
$H$-colinear maps such that:
\begin{enumerate}
\item the restrictions 
$\pi_i|_{P^{\co H}}:P^{\co H}\rightarrow P_i^{\co H}$ form a covering,
\item the $P_i$'s are  smash products ($P_i\cong P_i^{\co H}\rtimes H$ 
as $H$-comodule algebras).
\end{enumerate}
\end{definition}
\noindent
Note that, if the antipode of $H$ is bijective, then it follows from
the main result of \cite{HKMZ} that $P$ is principal --- this is an important instance of gluing of properties 
mentioned above. 
To emphasize
this fact and stay in touch with the classical terminology, we frequently
use the phrase ``piecewise-trivial principal comodule algebra".

Note also that the consequence of principality of $P$ is that 
$\{\pi_i:P\rightarrow P_i\}_{i\in\{1,\ldots,N\}}$ is a covering of $P$. 
To see this, one can use \cite[Proposition~3.4]{HKMZ} which states that $K\mapsto K\cap P^{\co H}$
is a lattice monomorphism between the lattice of ideals in $P$ which are right $H$-comodules and the lattice of 
ideals in
$P^{\co H}$. Indeed, we have  that 
\begin{equation}
P^{\co H}\cap\bigcap_i\ker\pi_i=\bigcap_i(\ker\pi_i\cap P^{\co H})
=0
\end{equation}
 by assumption,  so $\bigcap_i\ker\pi=0$ by the injectivity of $P^{\co H}\cap{\cdot}$.
The distributivity follows much in the same way as $P^{\co H}\cap{\cdot}$ maps monomorphically 
the lattice generated by $\ker\pi_i's$ into a distributive lattice.

The following lemma is the slight generalization of  the result implicit in the proof of
 \cite[Proposition~3.4]{HKMZ}.
It is used in the proof of our main result, but it is also interesting on its own.
\begin{lemma}\label{lplemma}
Let $P$ be a principal $H$-comodule algebra and $B=P^{\co H}$. Let $K$ be an ideal and
a right $H$-subcomodule of $P$, and let $L$ be an ideal in $B$.
Then $L=K\cap B$ if and only if $K=LP$.
%Let $P$ be a principal $H$-comodule algebra and $B=P^{\co H}$. Suppose that an ideal and a right $H$-subcomodule $K$
%of $P$ is of the form $K=LP$ where $L$ is an ideal in $B$. Then $L=K\cap B$.
\end{lemma}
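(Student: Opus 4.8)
The plan is to prove both implications directly, leaning on the principality of $P$ and the lattice monomorphism $K \mapsto K \cap B$ from \cite[Proposition~3.4]{HKMZ}, which says that the assignment sending a right $H$-subcomodule ideal $K$ of $P$ to $K \cap B$ is injective on the lattice of such ideals. The key auxiliary fact I will use is that, for a principal $H$-comodule algebra, $P$ is faithfully flat (indeed flat with a suitable splitting) over $B = P^{\co H}$, together with the strong-connection splitting $s$ of the multiplication map from \eqref{splitting}.

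First I would prove the ``if'' direction: assume $K = LP$ and show $L = K \cap B$. The inclusion $L \subseteq K \cap B$ is immediate since $L \subseteq B$ and $L = L\cdot 1 \subseteq LP = K$. For the reverse inclusion, take $b \in K \cap B = LP \cap B$, so $b = \sum_\alpha l_\alpha p_\alpha$ with $l_\alpha \in L$, $p_\alpha \in P$. Apply the splitting $s : P \to B \otimes P$ of the multiplication, or more directly apply the conditional-expectation-type argument: since $b \in B$, one has $\Delta_P(b) = b \otimes 1$, and applying $(\id \otimes \varepsilon) \circ s$ or the $B$-bilinear projection built from the strong connection $\ell$ to the equation $b = \sum_\alpha l_\alpha p_\alpha$ pushes each $p_\alpha$ into $B$ while keeping $l_\alpha \in L$ on the left; concretely, $b = m(s(b)) $ lives in $L \cdot s_{(2)}(b)$-type combinations but since $s$ is left $B$-linear and $L$ is a left ideal of $B$, we land in $L$. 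I would phrase this as: apply $s$ to $b = \sum l_\alpha p_\alpha$, use that $s$ is a left $B$-module map so $s(b) = \sum l_\alpha s(p_\alpha) \in L \otimes P$, then multiply back to get $b = m(s(b)) \in L \cdot B = L$. Here I also need that $K = LP$ really is a right $H$-subcomodule ideal of $P$ — it is clearly a right ideal, it is a left ideal because $L$ is an ideal of $B$ which is central enough via the module structure (or simply because $LP$ is a two-sided ideal as $P L \subseteq PB\cdot\text{span} \subseteq \ldots$; more safely, note $LP$ is the two-sided ideal generated by $L$ since $L \subseteq B$ commutes past nothing automatically — I should instead observe $PL P = LP$ using that $P$ is generated over $B$ appropriately, or just define things so $LP$ denotes the two-sided ideal generated by $L$), and it is a subcomodule because $L \subseteq B$ is coaction-invariant, so $\Delta_P(LP) \subseteq LP \otimes H$.

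Next the ``only if'' direction: assume $L = K \cap B$ and show $K = LP$. By the previous paragraph (applied to the ideal $L = K \cap B$), $LP$ is a right $H$-subcomodule ideal of $P$ with $LP \cap B = L = K \cap B$. Now both $K$ and $LP$ are right $H$-subcomodule ideals of $P$ having the same intersection with $B$, so by the injectivity of $K \mapsto K \cap B$ on this lattice \cite[Proposition~3.4]{HKMZ}, we conclude $K = LP$. This is clean provided I have already established that $LP$ is of the right type, which is why I do the ``if'' direction first.

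The main obstacle I anticipate is the reverse inclusion $K \cap B \subseteq L$ in the ``if'' direction: it genuinely requires the equivariant projectivity / faithful flatness of $P$ over $B$ (it is false for general comodule algebras), so I must be careful to invoke the splitting $s$ of \eqref{splitting} and use its left $B$-linearity essentially. A secondary technical point is pinning down that $K = LP$ is a two-sided ideal and an $H$-subcomodule — this is routine but I should state it explicitly, using $L \subseteq B = P^{\co H}$ for colinearity and, for the two-sided property, either work with the two-sided ideal generated by $L$ throughout or note that in the cases of interest $B$ is central (or use $PL \subseteq LP$ via the strong connection, writing $pl = p_{(0)} \ell(p_{(1)})^{\langle 1 \rangle} \ell(p_{(1)})^{\langle 2 \rangle} l \cdots$, though the cleanest route is simply to read $LP$ as the generated ideal and check $LP \cap B = L$ still goes through verbatim). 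Everything else is bookkeeping with Heynemann--Sweedler notation.
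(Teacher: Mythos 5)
Your ``if'' direction has the right mechanism in mind (the splitting $s$ of \eqref{splitting} lands in $B\otimes P$ and is left $B$-linear), but the concrete concluding step you wrote down fails: from $s(b)=\sum_\alpha l_\alpha s(p_\alpha)\in L\otimes P$ you ``multiply back'' and claim $b=m(s(b))\in L\cdot B=L$. The second tensor leg of $s$ lies in $P$, not in $B$, so multiplying back only returns $b\in LP$, which is where you started; likewise ``$(\id\otimes\varepsilon)\circ s$'' does not type-check, since $\varepsilon$ is the counit of $H$ while the second leg of $s$ is in $P$. The missing device -- and the one the paper uses -- is to apply $\id\otimes f$ for an arbitrary unital \emph{linear functional} $f$ on $P$: since $b\in B$ and $\ell$ is unital, $s(b)=b\otimes 1$, hence $(\id\otimes f)(s(b))=b$, while $(\id\otimes f)\bigl(\textstyle\sum_\alpha l_\alpha s(p_\alpha)\bigr)=\sum_\alpha l_\alpha\,(\id\otimes f)(s(p_\alpha))\in LB\subseteq L$ because the first leg of $s$ is in $B$. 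With that one extra line this direction closes; note also that it nowhere uses that $LP$ is an ideal or a subcomodule.

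The ``only if'' direction is where the real gap sits. To invoke the injectivity of $K\mapsto K\cap B$ from \cite[Proposition~3.4]{HKMZ} you must know \emph{a priori} that $LP$ belongs to the lattice in question, i.e.\ that it is a two-sided ideal of $P$ (colinearity is indeed automatic from $L\subseteq B$). This is exactly the point you flag and never settle, and it genuinely fails for arbitrary ideals $L$ of $B$: in a smash product $B\rtimes H$ one has $(1\otimes h)(l\otimes 1)=(h_{(1)}\triangleright l)\otimes h_{(2)}$, so $PL\subseteq LP$ forces $L$ to be stable under the $H$-action. For $L=K\cap B$ two-sidedness of $LP$ is true only a posteriori, because $LP=K$ -- which is what you are trying to prove. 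Your fallback of reading $LP$ as the two-sided ideal generated by $L$ changes the statement (the lemma is used later with $LP$ the product set, e.g.\ in \eqref{kers} and in Lemma~\ref{idsumcot}), and your splitting argument would not give $\langle L\rangle\cap B=L$ since the generators $plq$ have $l$ in the middle, where left $B$-linearity of $s$ does not reach. The paper avoids all of this by proving the hard inclusion directly: for $p\in K$ the splitting gives $p_{(0)}\ell(p_{(1)})^{\langle 1\rangle}\otimes\ell(p_{(1)})^{\langle 2\rangle}\in(B\otimes P)\cap(K\otimes P)=(B\cap K)\otimes P$, because $K$ is a right ideal and a subcomodule, whence $p=p_{(0)}\ell(p_{(1)})^{\langle 1\rangle}\ell(p_{(1)})^{\langle 2\rangle}\in(B\cap K)P=LP$. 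I recommend replacing your appeal to \cite[Proposition~3.4]{HKMZ} by this direct argument, which needs no properties of $LP$ at all.
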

\begin{proof}
Assume first that $K=LP$. 
 It is obvious that
$L\subseteq B\cap K$. To prove the converse inclusion, take
any $p:=\sum_il_ip_i\in K\cap B$, where $l_i\in L$, $p_i\in P$,
 for all $i$. Taking advantage of the splitting \eqref{splitting}
provided by a strong connection and 
any unital linear functional $f$ on $P$,  we compute
\begin{equation}
p=p\ell(1)\Sta f(\ell(1)\Stb)=p\sw0\ell(p\sw1)\Sta f(\ell(p\sw1)\Stb)=\sum_il_ip_i\sw0\ell(p_i\sw1)\Sta 
f(\ell(p_i\sw1)\Stb). 
\end{equation}
Hence, $p\in L$ as $p_i\sw0\ell(p_i\sw1)\Sta f(\ell(p_i\sw1)\Stb)\in B$ and $L$ is an ideal in $B$.

Conversely, assume that $L=B\cap K$. 
The inclusion $LP\subseteq K$ is obvious because
$K$ is an ideal in $P$. To show the opposite inclusion,
apply the splitting~\eqref{splitting} to any $p\in K$. Then
\begin{equation}
B\otimes P\ni p\sw0\ell(p\sw1)\Sta\otimes\ell(p\sw1)\Stb\in K\otimes P
\end{equation}
 because $K$ is a subcomodule and an ideal in $P$.
Therefore, 
\begin{equation}
p=p\sw0\eps(p\sw1)=p\sw0\ell(p\sw1)\Sta\ell(p\sw1)\Stb\in 
(B\cap K)P= LP,
\end{equation}
as needed.
\end{proof}

Finally, we recall  quantum versions of the the concepts
of a piecewise trivialisation and transition functions:
\begin{definition}
Let $\{\pi_i:P\to P_i\}_i$ be a covering by right $H$-colinear maps
of a  principal right $H$-comodule algebra
$P$ such that the restrictions 
$\pi_i|_{P^{\co H}}:P^{\co H}\rightarrow P_i^{\co H}$ also
form a covering.
A {\em piecewise trivialisation} of $P$ with respect to 
the covering $\{\pi_i:P\to P_i\}_i$  is a family
$\{\gamma_i:H\to P_i\}_i$ of right $H$-colinear algebra homomorphisms
(cleaving maps).
\end{definition}
\noindent
It is clear that a principal comodule algebra is piecewise-trivial
if and only if it admits a piecewise trivialisation.
With each piecewise trivialisation of $P$ we can associate the {\em
transition functions} 
\begin{equation}
\label{trfundef}
T_{ij}:=(\pi^i_j\circ\gamma_i)*(\pi^j_i\circ\gamma_j\circ S):\;
H\longrightarrow P/(\ker\pi_i+\ker\pi_j),
\end{equation}
where $\pi^i_j$'s are given by~\eqref{pij}.
It follows directly from the colinearity of $\pi^i_j$'s and $\gamma_j$'s
 that the elements in the images of all the $T_{ij}$'s are
  coaction invariant. Combining this with the fact that intersecting
kernels of $\pi_j$'s with coaction invariant subalgebra defines
a homomorphism of lattices \cite[Proposition~3.4]{HKMZ}, we conclude
 that the image of each $T_{ij}$
  is contained in 
$P^{\co H}/(\ker\pi_i|_{P^{\co H}}+\ker\pi_j|_{P^{\co H}})$.

As in the classical setting, transition functions can be used to 
assemble a principal comodule algebra from trivial pieces. Indeed,
\eqref{pcdef} can be rewritten as
\begin{equation}
\label{pctrans}
P^c=\{(p_i)_i\in\prod_iP_i\;|\;\pi^i_j(p_i\sw{0}\gamma_i(S(p_i\sw{1})))
T_{ij}(p_i\sw{2})
\otimes p_i\sw{3}
=\pi^j_i(p_j\sw{0}\gamma_j(S(p_j\sw{1})))\otimes p_j\sw{2}\}.
\end{equation}
Since, for any $i$ and $j$, we have
\begin{equation}
\im\;T_{ij}\subseteq P^{\co H}/(\ker\pi_i|_{P^{\co H}}+\ker\pi_j|_{P^{\co H}})
\end{equation}
 and 
$p\sw{0}\gamma_i(S(p\sw{1}))\in P_i^{\co H}$, the 
compatibility conditions defining $P^c$ all take place at the 
base-space (coaction invariant) algebras.

We are now ready to state the main result of this paper:
\begin{theorem}\label{mainres}
Let $P$ be a principal right $H$-comodule algebra, 
and $J$ a Hopf ideal of $H$ such that $H$ is a principal left 
$H/J$-comodule algebra. 
Then there exists a $J$-reduction of $P$ to a piecewise-trivial principal right $H/J$-comodule algebra if and only if
there exists a 
piecewise trivialisation of $P$ (with respect to the same covering 
$\{B\rightarrow B_i\}_{i\in\{1,\cdots,N\}}$
as that of the $J$-reduction)
 such that  $T_{ij}(J)=0$ 
for all the associated transition
functions $T_{ij}$ and $J\triangleright_i B_i=0$ for all the actions
$H\otimes B_i\rightarrow B_i$, 
$h\triangleright_i b:=\gamma_i(h_{(1)})b\gamma_i(S(h_{(2)}))$.
% for all $h\in J$, $b\in B_i$, for any index $i$.
\end{theorem}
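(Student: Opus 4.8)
The plan is to transport everything through the Hopf-Galois Reduction Theorem~\ref{hogare}, which identifies $J$-reductions $I$ of $P$ with morphisms $f\in\alg^H_H(\lco,Z_P(B))$, and then to read off what such an $f$ looks like on a piecewise-trivial $P$. First I would fix a piecewise trivialisation $\{\gamma_i:H\to P_i\}_i$ of $P$ (which exists by piecewise triviality), yielding via Proposition~\ref{pullisom2} the identification $P\cong P^c$ as in \eqref{pctrans}, together with the induced covering $\{B\to B_i\}_i$ of $B=P^{\co H}$. The key observation is that under the smash-product presentation $P_i\cong B_i\rtimes H$ with cleaving map $\gamma_i$, the translation map and hence the strong connection on each $P_i$ is given by the explicit formula \eqref{smashstrong}, $\ell_i(h)=\gamma_i(S(h\sw1))\otimes\gamma_i(h\sw2)$; the Miyashita-Ulbrich action on $B_i$ is then exactly $h\triangleright_i b=\gamma_i(h\sw1)\,b\,\gamma_i(S(h\sw2))$, which is precisely the $\triangleright_i$ in the statement. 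Since $Z_P(B)\cong\prod_i Z_{P_i}(B_i)$ compatibly with the gluing (the centraliser condition can be checked componentwise because $B\to B_i$ is surjective), an element $f\in\alg^H_H(\lco,Z_P(B))$ is the same as a compatible family $f_i:=\pi_i\circ f:\lco\to Z_{P_i}(B_i)$ of $H$-colinear algebra maps each satisfying the Miyashita-Ulbrich compatibility for $\triangleright_i$.

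Next I would analyse one component $f_i$. The domain $\lco=H^{co\,H/J}$ is the coaction-invariant subalgebra for the left $H/J$-coaction, so $H$-colinearity of $f_i$ forces its image into $B_i$; the Miyashita-Ulbrich compatibility for $f_i$ then says exactly that $f_i(S(h\sw1)kh\sw2)=\gamma_i(h\sw1)f_i(k)\gamma_i(S(h\sw2))=h\triangleright_i f_i(k)$. Now here is the crucial reformulation: giving an $H$-colinear algebra map $f_i:\lco\to B_i$ compatible with $\triangleright_i$ is, by the smash-product bijection applied \emph{again} at the level of the reducing group, the same as giving a right $H/J$-colinear algebra map, equivalently an $H/J$-reduction datum on $P_i$ itself; and by Theorem~\ref{hogare} applied to the principal $H$-comodule algebra $P_i$ (with $B_i=P_i^{\co H}$), this corresponds to a $J$-reduction $I_i$ of $P_i$ to a \emph{smash product} $P_i/I_i\cong B_i\rtimes(H/J)$, with the residual cleaving map being $\pi_i\circ\gamma_i$ composed with $H\to H/J$ --- which is well-defined precisely when $\gamma_i$ kills enough of $J$. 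The cleanest way to package the ``enough of $J$'' condition so that the \emph{glued} object $P/I$ is again piecewise trivial (not just each $P_i/I_i$) is to demand that the transition functions of the reduced trivialisation vanish; since $T_{ij}$ for $H/J$ is obtained from $T_{ij}$ for $H$ by precomposing with $H\to H/J$, the reduced transition functions are well-defined and glue iff $T_{ij}(J)=0$, and additionally the residual action on $B_i$ must be trivial on $J$, i.e.\ $J\triangleright_i B_i=0$, for the local pieces $P_i/I_i$ to be smash products over $H/J$ with the \emph{same} base $B_i$ (condition (3) of Definition~\ref{red}, that $(P/I)^{\co H/J}=B$).

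Concretely, the forward direction: given a $J$-reduction of $P$ to a piecewise trivial $P/I$, it comes with a piecewise trivialisation $\{\bar\gamma_i:H/J\to (P/I)_i\}_i$; lifting each $\bar\gamma_i$ along $H\to H/J$ and along $P_i\to(P/I)_i$ (using projectivity/the explicit smash structure, after possibly correcting by a convolution-invertible $1$-cochain, which does not affect the vanishing conditions) produces $\gamma_i:H\to P_i$ whose transition functions visibly annihilate $J$ and whose $\triangleright_i$ kills $J$ because $\bar\gamma_i$ is defined on $H/J$. The converse: given $\{\gamma_i\}_i$ with $T_{ij}(J)=0$ and $J\triangleright_i B_i=0$, the maps $f_i:\lco\to B_i$ defined as the smash-product translation composites descend to give a well-defined $f\in\alg^H_H(\lco,Z_P(B))$ (the gluing compatibility for $f$ is exactly $T_{ij}(J)=0$ read through \eqref{pctrans}), and Theorem~\ref{hogare} turns $f$ into the desired $J$-reduction $I=I_f$; piecewise triviality of $P/I$ follows because each $P_i/I_i$ is a smash product over $H/J$ (here $J\triangleright_i B_i=0$ is what guarantees the base stays $B_i$) and Proposition~\ref{pullisom2} glues them.

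The main obstacle I anticipate is the gluing bookkeeping in the converse direction: verifying that the locally-defined maps $f_i$ (or the locally-defined reduction ideals $I_i\subseteq P_i$) are genuinely compatible along the overlaps $P/(\ker\pi_i+\ker\pi_j)$, i.e.\ that $\pi^i_j(I_i)=\pi^j_i(I_j)$, so that $I:=\chi^{-1}(\prod_i I_i\cap P^c)$ is an honest ideal of $P$ with $\pi_i(I)=I_i$. This is exactly where the hypothesis $T_{ij}(J)=0$ must be used: writing $I_i=P_i\,\gamma_i(J)$ (the smash-product form of $I_{f_i}$, using Lemma~\ref{lplemma} to pass between ideals of $P_i$ and of $B_i$), the overlap condition unwinds, via the rewritten pullback description \eqref{pctrans}, to the statement that $\pi^i_j\circ\gamma_i$ and $\pi^j_i\circ\gamma_j$ agree on $J$ up to the transition function, which is precisely $T_{ij}|_J=0$. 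Making this cochain-level argument precise --- and checking that the Miyashita-Ulbrich compatibility of the glued $f$ follows from the componentwise ones --- is the technical heart; the rest is assembling already-established machinery (Theorems~\ref{hogare}, Propositions~\ref{pullisom2}, Lemma~\ref{lplemma}, and the smash-product strong connection \eqref{smashstrong}).
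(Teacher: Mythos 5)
Your direction ``trivialisation with $T_{ij}(J)=0$ and $J\triangleright_i B_i=0$ $\Rightarrow$ reduction'' follows the paper's route (restrict the $\gamma_i$ to $\lco$, glue the resulting local maps using $T_{ij}(J)=0$, feed the glued map into Theorem~\ref{hogare}, and use Lemma~\ref{lplemma} to see that the covering and the trivialisations descend to $P/Pf(J)$), although one intermediate claim is wrong: right $H$-colinearity does \emph{not} force $f_i$ to take values in $B_i$, since $\lco$ is only the space of left $H/J$-coinvariants and its right $H$-coaction is nontrivial; the correct target is $Z_{P_i}(B_i)$ (this is exactly what Lemma~\ref{trivredl} establishes, using $D^+\subseteq J$ and $J\triangleright_i B_i=0$). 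The passage to maps with values in $B_i$ is a separate bijection (Proposition~\ref{trivred}) and is not needed here.

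The genuine gap is in the forward direction. You propose to take the cleaving maps $\bar\gamma_i:H/J\to(P/I)_i$ of the reduced algebra and ``lift'' them along $H\to H/J$ and along $P_i\to(P/I)_i$ using projectivity and a convolution-invertible correction. This does not work as stated, for three reasons. First, the pieces $P_i$ of a covering of $P$ compatible with the given covering of $P/I$ are not part of the data; they have to be \emph{constructed}, and the paper does this by invoking the prolongation isomorphism $P\cong (P/I)\,\square_{H/J}H$ (Lemma~\ref{cotensisomlem}) and setting $P_i:=\bar P_i\square_{H/J}H$, where surjectivity of the induced maps needs the coflatness of $H$ over $H/J$. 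Second, lifting a right $H$-colinear \emph{algebra} homomorphism along an algebra surjection is obstructed; projectivity gives only linear sections, and cochain twisting does not repair multiplicativity or colinearity in general. Moreover the map you would obtain factors through $H/J$, whereas a trivialisation of $P_i$ must be a cleaving map for the full $H$-coaction; the correct formula, $\gamma_i(k)=\bar\gamma_i(\pi(k\sw{1}))\otimes k\sw{2}$ on the cotensor product, retains the full $H$-leg and does not factor through $H/J$ (Lemma~\ref{almostinv}). Third, the claim that the lifted transition functions ``visibly'' annihilate $J$ is not automatic: in the paper this is a computation that requires identifying the overlap algebras $(\bar P\square_{H/J}H)/(\ker(\bar\pi_i\square\,\id)+\ker(\bar\pi_j\square\,\id))$ with $(\bar P/(\ker\bar\pi_i+\ker\bar\pi_j))\square_{H/J}H$, which in turn rests on the distributivity of the cotensor product over sums of comodule ideals (Lemma~\ref{idsumcot}, itself proved via Lemma~\ref{lplemma}). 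Without the cotensor-product (prolongation) mechanism, this half of the equivalence is not established by your argument.
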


\subsection{A proof of the main theorem}

Our proof consists of two parts each of which establishes one of the implications of the asserted equivalence.
Both parts are
divided into several lemmas. 
 First, we provide lemmas needed for proving the implication
``the existence of a trivialisation with some properties implies that there exists a reduction to a 
piecewise-trivial comodule algebra''.

Our first lemma is a certain general statement needed in the second 
lemma.
\begin{lemma}[\cite{qsng}]\label{jl}
Let $L$ be a bialgebra and $\overline{L}$ be a coalgebra and a left $L$-module. Assume that there 
exists a surjective left $L$-linear coalgebra map $\pi: L\rightarrow \overline{L}$, and view $L$ 
as a left $\overline{L}$-comodule with the coaction ${}_L\Delta = (\pi\ot\id)\circ \Delta$. 
Then 
\begin{equation}
\label{desc.coin} D:=\,^{co\overline{L}}\!L
 = \{d\in L\; |\;{}_L\Delta(d) = \pi(1)\otimes d\}
\end{equation} 
is a right $L$-comodule subalgebra of $L$, i.e.\ 
$\Delta(D)\subseteq D\otimes L$. Furthermore, the augmentation ideal
$D^+:=D\cap \ker\eps$ is contained in
$\ker\pi$ and $\Delta(d)-1\ot d\in D^+\ot L$ for all $d\in D$. 
\end{lemma}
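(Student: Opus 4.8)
The plan is to work throughout with the defining coinvariance relation of $D$ written in Sweedler notation, namely $\pi(d_{(1)})\otimes d_{(2)}=\pi(1)\otimes d$ for $d\in D$, and to extract each assertion by applying an appropriate $k$-linear map to both sides of it; the exactness of $-\otimes_k L$ over the ground field will be used freely to pass between ``an element lies in $D\otimes L$'' and the corresponding equational condition. As a preliminary I would note that ${}_L\Delta=(\pi\otimes\id)\circ\Delta$ is indeed a left $\overline L$-coaction: the counit axiom reduces to $\eps_{\overline L}\circ\pi=\eps_L$ (valid since $\pi$ is a coalgebra map), and coassociativity follows from $\Delta_{\overline L}\circ\pi=(\pi\otimes\pi)\circ\Delta$ together with coassociativity in $L$. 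Only the coalgebra-map property of $\pi$ enters here; its left $L$-linearity is needed solely for the subalgebra claim.

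For $\Delta(D)\subseteq D\otimes L$: given $d\in D$, apply $\id_{\overline L}\otimes\Delta$ to $\pi(d_{(1)})\otimes d_{(2)}=\pi(1)\otimes d$; by coassociativity the left side becomes $({}_L\Delta\otimes\id)(\Delta(d))$ and the right side becomes $\pi(1)\otimes\Delta(d)$, which is exactly the condition for $\Delta(d)$ to lie in $D\otimes L$. To see that $D$ is a subalgebra, observe that $1\in D$ is immediate, and for $e\in D$ and arbitrary $a\in L$ the left $L$-linearity of $\pi$ gives ${}_L\Delta(ae)=\pi(a_{(1)}e_{(1)})\otimes a_{(2)}e_{(2)}=\bigl(a_{(1)}\triangleright\pi(e_{(1)})\bigr)\otimes a_{(2)}e_{(2)}$; substituting the coinvariance relation $\pi(e_{(1)})\otimes e_{(2)}=\pi(1)\otimes e$ into the $e$-dependent factors collapses this to $\bigl(a_{(1)}\triangleright\pi(1)\bigr)\otimes a_{(2)}e=\pi(a_{(1)})\otimes a_{(2)}e$. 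Taking $a=d\in D$, this equals $\pi(1)\otimes de$, so $de\in D$. Combined with the previous step, $\Delta$ restricts to $D$ and makes it a right $L$-comodule subalgebra of $L$.

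The last two statements come from feeding the coinvariance relation through counit maps. Applying $\id_{\overline L}\otimes\eps$ to $\pi(d_{(1)})\otimes d_{(2)}=\pi(1)\otimes d$ yields $\pi(d)=\eps(d)\,\pi(1)$; hence if $d\in D^+=D\cap\ker\eps$ then $\pi(d)=0$, i.e.\ $D^+\subseteq\ker\pi$. For the final claim, fix $d\in D$: both $\Delta(d)$ and $1\otimes d$ lie in $D\otimes L$ (the latter because $1\in D$), so their difference does; applying $\eps\otimes\id$ to $\Delta(d)-1\otimes d$ gives $d-d=0$, so the difference also lies in $\ker\eps\otimes L$, and therefore in $(D\otimes L)\cap(\ker\eps\otimes L)=(D\cap\ker\eps)\otimes L=D^+\otimes L$, as required.

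I do not expect a genuinely hard step here --- the argument is short Sweedler bookkeeping --- but the point most easily overlooked is the subalgebra closure: because ${}_L\Delta$ is \emph{not} an algebra map (its target $\overline L\otimes L$ carries no compatible algebra structure), one cannot simply invoke the usual principle that coinvariants of a comodule algebra form a subalgebra, and must instead exploit the left $L$-linearity of $\pi$ to produce the mixed identity ${}_L\Delta(ae)=\pi(a_{(1)})\otimes a_{(2)}e$ valid for $e\in D$. A secondary caveat is the repeated, tacit appeal to exactness of $-\otimes_k L$ over $k$ to move between submodule membership and equational statements, for instance in $(D\otimes L)\cap(\ker\eps\otimes L)=D^+\otimes L$.
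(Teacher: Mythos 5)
Your proof is correct. Note that the paper itself gives no proof of this lemma (it is imported from \cite{qsng}), so there is no in-paper argument to compare against; your Sweedler-notation computation is the standard one and establishes all three claims, correctly isolating the only two points needing care: the left $L$-linearity of $\pi$, used solely to get ${}_L\Delta(ae)=\pi(a_{(1)})\otimes a_{(2)}e$ for $e\in D$ and hence closure of $D$ under multiplication, and the exactness of $-\otimes_k L$ over the field, used to identify $D\otimes L$ with the kernel of $\bigl({}_L\Delta-\pi(1)\otimes(-)\bigr)\otimes\id_L$ and to justify $(D\otimes L)\cap(\ker\varepsilon\otimes L)=D^+\otimes L$.
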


In the following lemma, we prove the existence of a 
reduction of a trivial (smash product) comodule algebra
when the trivialising map satisfies certain condition.
\begin{lemma}
\label{trivredl}
Let $P$ be a smash product $H$-comodule algebra, $B:=P^{\co H}$,
and  $\gamma:H\rightarrow P$ be a cleaving map.
Let $J$ be a  Hopf ideal of $H$ such that 
$h\triangleright b:=\gamma(h_{(1)})b\gamma(S(h_{(2)}))=0$ 
for all $h\in J$ and $b\in B$. 
Then $\gamma$ restricts to an element of
$\alg^H_H({}^{\co H/J}H,Z_P(B))$. 
\end{lemma}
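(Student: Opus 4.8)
The goal is to show that if $P = B\rtimes H$ is a smash product with cleaving map $\gamma$ and $J$ is a Hopf ideal with $J\triangleright B = 0$, then $\gamma$ restricted to ${}^{\co H/J}H$ lands in $Z_P(B)$ and respects the Miyashita–Ulbrich action, i.e.\ $\gamma|_{{}^{\co H/J}H}\in\alg^H_H({}^{\co H/J}H,Z_P(B))$. The plan is to verify, in order: (i) $\gamma$ is a right $H$-colinear algebra homomorphism (given), so its restriction to the subalgebra $D:={}^{\co H/J}H$ is again such a homomorphism, once we know $D$ is a subalgebra — but that is exactly Lemma~\ref{jl} applied with $L=H$, $\overline L = H/J$, and $\pi$ the quotient map; (ii) $\gamma(D)\subseteq Z_P(B)$; (iii) $\gamma|_D$ is compatible with the Miyashita–Ulbrich action.

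For (ii), I would take $d\in D$ and $b\in B$ and compute $\gamma(d)b$. Since $\gamma$ is an algebra map, $\gamma(d)b = \gamma(d)b\gamma(S(d_{(2)}))\gamma(d_{(3)})\varepsilon$-collapsed appropriately; more precisely, using $d_{(1)}S(d_{(2)})\otimes d_{(3)} = 1\otimes d$ one rewrites $\gamma(d)b$ as $(d_{(1)}\triangleright b)\gamma(d_{(2)})$ where $\triangleright$ is the action $h\triangleright b=\gamma(h_{(1)})b\gamma(S(h_{(2)}))$. Now Lemma~\ref{jl} tells us that for $d\in D$ one has $\Delta(d) - 1\otimes d\in D^+\otimes H$ with $D^+ = D\cap\ker\varepsilon\subseteq\ker\pi = J$. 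Hence $d_{(1)}\triangleright b\otimes d_{(2)}$ splits as $1\triangleright b\otimes d$ plus a term in $(J\triangleright B)\otimes H = 0$. Therefore $\gamma(d)b = b\gamma(d)$, giving $\gamma(d)\in Z_P(B)$. (The bookkeeping with the coproduct identities is the only mildly fiddly part, but it is routine Sweedler manipulation.)

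For (iii), I must check $\gamma(S(h_{(1)})kh_{(2)}) = k\o\gamma(k)k\t$ for $k\in D$, $h\in H$ — wait, more precisely the condition in the displayed equation is $f(S(h_{(1)})kh_{(2)}) = h\o f(k)h\t$ with $h\o\otimes_B h\t$ the translation map of $P$. For a smash product the strong connection is \eqref{smashstrong}, so the translation map is $h\mapsto (1\otimes S(h_{(1)}))\otimes_B(1\otimes h_{(2)}) = \gamma(S(h_{(1)}))\otimes_B\gamma(h_{(2)})$. Thus the right-hand side is $\gamma(S(h_{(1)}))\gamma(k)\gamma(h_{(2)}) = \gamma(S(h_{(1)})kh_{(2)})$ since $\gamma$ is an algebra map. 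So the Miyashita–Ulbrich compatibility is automatic from multiplicativity of $\gamma$ and the explicit form of the translation map; one only needs that both sides are well defined, i.e.\ that $S(h_{(1)})kh_{(2)}\in D$ for $k\in D$, which is noted in the excerpt right before Theorem~\ref{hogare}.

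The main obstacle — really the only nontrivial point — is step (ii): one must feed the hypothesis $J\triangleright B=0$ into the computation at precisely the right moment, and for that one needs the structural fact from Lemma~\ref{jl} that $\Delta(d)-1\otimes d\in D^+\otimes H$ and $D^+\subseteq J$. Everything else (that $\gamma|_D$ is a colinear algebra homomorphism, that it respects Miyashita–Ulbrich) follows formally from $\gamma$ being a cleaving map that is an algebra homomorphism together with the explicit translation map of a smash product. I would therefore organize the write-up as: invoke Lemma~\ref{jl} to get $D$ a subalgebra and the key containment; do the one-line colinearity/multiplicativity remark; do the $Z_P(B)$ computation; close with the translation-map identification for the Miyashita–Ulbrich condition.
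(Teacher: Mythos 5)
Your proposal is correct and follows essentially the same route as the paper: both establish the Miyashita--Ulbrich compatibility via the explicit translation map $h\mapsto\gamma(S(h\sw{1}))\otimes_B\gamma(h\sw{2})$ together with multiplicativity of $\gamma$, and both prove $\gamma(D)\subseteq Z_P(B)$ by writing $\gamma(d)b=(d\sw{1}\triangleright b)\gamma(d\sw{2})$ and killing the correction term using $\Delta(d)-1\otimes d\in D^+\otimes H$ and $D^+\subseteq J$ from Lemma~\ref{jl}. The only difference is cosmetic bookkeeping in the Sweedler manipulation.
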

\begin{proof}
Denote for brevity $D:={}^{\co H/J}H$. By definition, $\gamma$ restricted to $D$ is in
$\alg^H(D,P)$. 
The translation map can be written in terms of $\gamma$ as
follows:
 $h^{[1]}\otimes_B h^{[2]}=\gamma(S(h\sw{1}))\otimes_B \gamma(h\sw{2})$.
Hence, the $H$-linearity of $\gamma$ for the Miyashita--Ulbrich
action follows directly from the fact that $\gamma$ is an algebra map.  It remains to show that
$\gamma(h)\in Z_P(B)$ for all $h\in D$. To this end, note that  
$D^+\subseteq J$ and $\Delta(D)\subseteq D\otimes H$ by Lemma~\ref{jl}.
 %Also by Lemma~\ref{jl},. \
%Hence it is enough to show that $\gamma(h)\in Z_P(B)$ for all $h\in J$.
Now, let $h\in D$ and $b\in B$. Then, using
 $\nu:D\ni h \mapsto h -\varepsilon(h)1_H\in D^+$, we obtain
\begin{equation}
\gamma(h)b=(h\sw{1}\triangleright b)\gamma(h\sw{2})
=b\gamma(h)+(\nu(h\sw{1})\triangleright b)\gamma(h\sw{2})
%\gamma(\nu(h\sw{1})\sw{1})b\gamma(S(\nu(h\sw{1})\sw{2}))\gamma(h\sw{3})
=b\gamma(h).
\end{equation}
This ends the proof.
\end{proof}

The next lemma  provides a way in which reductions can be combined 
together in a piecewise-trivial comodule algebra.
\begin{lemma}\label{redcomp}
Let $H$ be a Hopf algebra with bijective antipode and  
$J$ be a Hopf ideal of $H$
such that the antipode of $H/J$ is also bijective. Let
 $P$ be a piecewise-trivial principal 
$H$-comodule algebra with a covering
$\{\pi_i:P\rightarrow P_i\}_{i\in \{1,\ldots,N\}}$.
Denote $B_i:=P_i^{\co H}$ and $B:=P^{\co H}$. Then,
if there exists a family of maps 
$f_i\in \alg^H_H(\lco,Z_{P_i}(B_i))$, $i\in\{1,\ldots,N\}$, 
such that $\pi^i_j\circ f_i=\pi^j_i\circ f_j$ for all $i,j$,
the following map defined with the help of \eqref{pullisom}
\begin{equation}
f:\lco\longrightarrow P,\quad h\longmapsto\chi^{-1}((f_i(h))_i),
\end{equation}
is an element of $\alg^H_H(\lco,Z_{P}(B))$.
\end{lemma}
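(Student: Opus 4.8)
The plan is to verify that the map $f\colon\lco\to P$ defined via the multipullback isomorphism $\chi$ of Proposition~\ref{pullisom2} lands in $\alg^H_H(\lco,Z_P(B))$, that is, that it is (i)~an algebra homomorphism, (ii)~right $H$-colinear, and (iii)~compatible with the Miyashita-Ulbrich action, and moreover that its image lies in the centralizer $Z_P(B)$. First I would note that $f$ is well defined: the compatibility hypothesis $\pi^i_j\circ f_i=\pi^j_i\circ f_j$ is exactly the condition that the tuple $(f_i(h))_i$ belongs to the multipullback $P^c$ for every $h\in\lco$, so $\chi^{-1}$ can be applied. Then I would exploit the key structural fact that $\chi$ is an isomorphism of $H$-comodule algebras (Proposition~\ref{pullisom2}), so that checking (i), (ii), (iii) for $f$ is equivalent to checking the corresponding statements for the componentwise map $h\mapsto(f_i(h))_i$ into $P^c\cong P$. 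Each $f_i$ is an algebra map by hypothesis, hence so is the componentwise map; each $f_i$ is right $H$-colinear, and since the coaction on $P^c$ (equivalently on $P$) is induced componentwise, colinearity transfers; similarly the Miyashita-Ulbrich compatibility $f_i(S(h_{(1)})kh_{(2)})=k^{[1]}f_i(k)k^{[2]}$ — wait, more precisely $f_i(S(h\sw1)kh\sw2)=h\o f_i(k)h\t$ — holds in each $P_i$, and since translation maps are also transported along the colinear algebra isomorphism $\chi$, the same identity holds for $f$ in $P$.

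The one point requiring a little more care, and the step I expect to be the main (though modest) obstacle, is showing that $f(h)\in Z_P(B)$ for all $h\in\lco$. I would argue as follows. Take $h\in\lco$ and $b\in B$. Under $\chi$, the element $b\in B=P^{\co H}$ maps to $(\pi_i(b))_i$, and since $\pi_i$ restricts to $B\to B_i$, each component $\pi_i(b)$ lies in $B_i=P_i^{\co H}$. Since $f_i(h)\in Z_{P_i}(B_i)$ by hypothesis, we get $f_i(h)\,\pi_i(b)=\pi_i(b)\,f_i(h)$ in $P_i$ for every $i$. Therefore $(f_i(h))_i\cdot(\pi_i(b))_i=(\pi_i(b))_i\cdot(f_i(h))_i$ in $P^c$, and applying the algebra isomorphism $\chi^{-1}$ gives $f(h)b=bf(h)$ in $P$. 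Hence $f(h)\in Z_P(B)$, as needed.

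Finally I would assemble these observations into the conclusion. Because $\chi$ is simultaneously an algebra isomorphism and an $H$-comodule isomorphism, and because $B$ corresponds to $\prod_iB_i\cap P^c$ under $\chi$, all four required properties of $f$ reduce to the componentwise statements, each of which is an immediate consequence of the hypotheses on the $f_i$ together with the transport of the translation map along $\chi$. This shows $f\in\alg^H_H(\lco,Z_P(B))$ and completes the proof. I do not anticipate any serious difficulty beyond bookkeeping: the heart of the argument is simply that the multipullback isomorphism is compatible with all the structure in sight, so properties glue componentwise, and the only genuinely new input is the elementary observation that $\chi$ identifies $B$ with tuples whose components lie in the $B_i$, which is what lets the centralizer condition propagate.
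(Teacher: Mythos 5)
Your proof is correct and follows essentially the same route as the paper: use the comodule-algebra isomorphism $\chi$ to reduce everything to componentwise checks, verify the centralizer condition via $\pi_i(B)\subseteq B_i$ and $f_i(h)\in Z_{P_i}(B_i)$, and transfer the Miyashita--Ulbrich compatibility using the fact that the translation map of $P$ projects to those of the $P_i$'s (the paper states this precisely as $(\pi_i\otimes\pi_i)\circ\tau$ being the translation map of $P_i$, which is the exact form of your ``transport'' remark). No gaps beyond that minor phrasing.
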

\begin{proof}
It is immediate that $f\in \alg^H(\lco,P)$. Furthermore,
 for any $h\in\lco$ and $b\in B$,
\begin{equation*}
bf(h)=b\chi^{-1}((f_i(h))_i)=\chi^{-1}((\pi_i(b)f_i(h))_i)=\chi^{-1}((f_i(h)\pi_i(b))_i)
=\chi^{-1}((f_i(h))_i)b=f(h)b,
\end{equation*}
so  $f(h)\in Z_P(B)$. 
Finally, if $\tau:H\rightarrow P\otimes_B P$ is the 
translation map for $P$, then $(\pi_i\otimes\pi_i)\circ \tau$ is the translation map for
 $P_i$ and, for any $k\in H$ and $h\in\lco$, we can compute:
\begin{align}
k^{[1]}f(h)k^{[2]}&=k^{[1]}\chi^{-1}((f_i(h))_i)k^{[2]}\\
&=\chi^{-1}((\pi_i(k^{[1]})f_i(h)\pi_i(k^{[2]}))_i)\nonumber\\
&=\chi^{-1}((f_i(Sk\sw{1}hk\sw{2}))_i)\nonumber\\
&=f(Sk\sw{1}hk\sw{2}).\nonumber
\end{align}
Hence, $f$ is an element of $\alg^H_H(\lco,Z_{P}(B))$.
\end{proof}

To combine the above two lemmas, we need the following.
\begin{lemma}
\label{trivcor}
Let $J$ be a Hopf ideal of $H$ and 
$\{\gamma_i:H\rightarrow P_i\}_{i\in\{1,\cdots,N\}}$ 
be a  piecewise trivialisation
of a principal $H$-comodule algebra~$P$. Then, 
$\forall\; i,j\in\{1,\cdots,N\}:$
\[
T_{ij}(J)=0
\;\Rightarrow\;
\forall\; h\in{}^{\co H/J}H: 
\pi^i_j(\gamma_i(h))=\pi^j_i(\gamma_j(h)),
\]
where $\pi^i_j$'s are the canonical surjections of~\eqref{pij}
and $T_{ij}$'s are the transition functions of~\eqref{trfundef}.
\end{lemma}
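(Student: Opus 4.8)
\textbf{Proof plan for Lemma~\ref{trivcor}.}
The strategy is to exploit the description of $D:={}^{\co H/J}H$ from Lemma~\ref{jl}: for $h\in D$ we have $\Delta(h)-1\otimes h\in D^+\otimes H$ and $D^+\subseteq J$. The transition function is the convolution product $T_{ij}=(\pi^i_j\circ\gamma_i)*(\pi^j_i\circ\gamma_j\circ S)$, so the natural move is to convolve $T_{ij}$ with $\pi^j_i\circ\gamma_j$ on the right and use coassociativity together with the fact that $\gamma_j$ is a unital algebra map (so $\gamma_j(S(h\sw1))\gamma_j(h\sw2)=\gamma_j(\varepsilon(h)1)=\varepsilon(h)1$). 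Concretely, $\bigl(T_{ij}*(\pi^j_i\circ\gamma_j)\bigr)(h)=\pi^i_j(\gamma_i(h\sw1))\,\pi^j_i\bigl(\gamma_j(S(h\sw2))\gamma_j(h\sw3)\bigr)=\pi^i_j(\gamma_i(h\sw1))\varepsilon(h\sw2)=\pi^i_j(\gamma_i(h))$, where in the last step I identify the images of $\pi^i_j$ and $\pi^j_i$ inside the common quotient $P/(\ker\pi_i+\ker\pi_j)$. So the identity $\pi^i_j(\gamma_i(h))=\bigl(T_{ij}*(\pi^j_i\circ\gamma_j)\bigr)(h)$ holds for all $h\in H$ with no hypothesis at all.

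Next I would feed the hypothesis $T_{ij}(J)=0$ into this identity for $h\in D$. Write $\Delta(h)=1\otimes h + \sum_a d_a\otimes h_a$ with $d_a\in D^+\subseteq J$, as furnished by Lemma~\ref{jl}. Then
\begin{equation*}
\pi^i_j(\gamma_i(h))=\bigl(T_{ij}*(\pi^j_i\circ\gamma_j)\bigr)(h)
=T_{ij}(1)\,\pi^j_i(\gamma_j(h))+\sum_a T_{ij}(d_a)\,\pi^j_i(\gamma_j(h_a)).
\end{equation*}
Since each $d_a\in J$ and $T_{ij}(J)=0$, the sum vanishes; and $T_{ij}(1)=\pi^i_j(\gamma_i(1))\pi^j_i(\gamma_j(S(1)))=1$ because both $\gamma$'s and $S$ are unital. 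Hence $\pi^i_j(\gamma_i(h))=\pi^j_i(\gamma_j(h))$, which is exactly the claim.

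The only genuinely delicate point is bookkeeping the various algebras in which these equalities live: $T_{ij}$, $\pi^i_j\circ\gamma_i$ and $\pi^j_i\circ\gamma_j$ all take values in $P/(\ker\pi_i+\ker\pi_j)$ (the colinearity remark after \eqref{trfundef} even places $\im T_{ij}$ in the coaction-invariant part of that quotient), so the convolution $T_{ij}*(\pi^j_i\circ\gamma_j)$ is computed there and the cancellation $\gamma_j(S(h\sw1))\gamma_j(h\sw2)=\varepsilon(h)1$ happens inside $P_j$ before applying $\pi^j_i$. I expect this — correctly threading the Sweedler legs through $\Delta$, $\pi^i_j$, $\pi^j_i$ and the algebra-map property of the $\gamma$'s — to be the main (though still routine) obstacle; once the identity $\pi^i_j(\gamma_i(h))=\bigl(T_{ij}*(\pi^j_i\circ\gamma_j)\bigr)(h)$ is in place, the implication is immediate from $D^+\subseteq J$ and $T_{ij}(J)=0$.
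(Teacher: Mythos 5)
Your argument is correct and is essentially the paper's proof: both rest on the convolution identity $\pi^i_j\circ\gamma_i=T_{ij}*(\pi^j_i\circ\gamma_j)$ (valid because $\gamma_j$ is a unital algebra map, so $\pi^j_i\circ\gamma_j\circ S$ is its convolution inverse) together with $D^+\subseteq J$ from Lemma~\ref{jl} and $T_{ij}(J)=0$, $T_{ij}(1)=1$. The only cosmetic difference is that you feed in the clause $\Delta(h)-1\otimes h\in D^+\otimes H$ of Lemma~\ref{jl}, whereas the paper decomposes $h=\varepsilon(h)1+(h-\varepsilon(h)1)$ with $h-\varepsilon(h)1\in D^+\subseteq J$ to get $T_{ij}(h)=\varepsilon(h)$ on $D$ and then convolves back; these are the same computation organized slightly differently.
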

\begin{proof}
Denote for brevity $D:={}^{\co H/J}H$. 
For all $i$, $j$, the equality
 $\pi^i_j(\gamma_i(h))=\pi^j_i(\gamma_j(h))$ is equivalent
to $T_{ij}(h)=\varepsilon(h)$ because 
%taking the convolution product with 
$\gamma_j*(\gamma_j\circ S)=\epsilon=(\gamma_j\circ S)\circ\gamma_j$. 
Furthermore, $T_{ij}(J)=0$ by assumption and 
 $D^+\subseteq J$ by Lemma~\ref{jl}, so, for any
$h\in D$, we obtain
\begin{equation}
T_{ij}(h)=\varepsilon(h)+T_{ij}(h-\varepsilon(h))=\varepsilon(h).
\end{equation}
\end{proof}

The preceding three lemmas combined with
 Theorem~\ref{hogare} yield that $P/Pf(J)$ is an $H/J$-principal
comodule algebra. It remains to show that $P/Pf(J)$ is piecewise trivial. To this end, we apply 
Lemma~\ref{lplemma}
to show that a covering of $P$ induces a covering of $P/Pf(J)$.
For brevity, denote $Pf(J)$ by $I$. Let $[\cdot]:P\rightarrow P/I$
stand for the canonical surjection. 
Define $\bar{P}_i:=P_i/\pi_i(I)$ for all $i$.
The surjections $\pi_i$ descend to
$\bar{\pi}_i:P/I\rightarrow \bar{P}_i$. 
From Lemma~\ref{lplemma}, 
we conclude that
\begin{equation}\label{kers}
\ker\bar\pi_i=[\ker\pi_i]=[\ker\pi_i|_BP]=[\ker\pi_i|_B][P].
\end{equation}
Furthermore, since $P/I$
is also a principal comodule algebra, and $[B]=[P]^{co H/J}$
by Theorem~\ref{hogare},
we infer from Lemma~\ref{lplemma} 
 that $[B]\cap ([\ker\pi_i|_B][P])=[\ker\pi_i|_B]$ for any $i$.
Combining this with \eqref{kers} and remembering $B\cong [B]$ 
by Theorem~\ref{hogare}, we compute
\begin{equation*}
\bigcap_{i\in\{1,\cdots,N\}}\!\ker\bar{\pi_i}|_{[B]}
=\!\bigcap_{i\in\{1,\cdots,N\}}\!([B]\cap\ker\bar{\pi_i})
%=\bigcap_{i\in\{1,\cdots,N\}}[B]\cap ([\ker\pi_i|_B][P])
=\!\bigcap_{i\in\{1,\cdots,N\}}\![\ker\pi_i|_B]
= \Big[\bigcap_{i\in\{1,\cdots,N\}}\!\ker\pi_i|_B\Big]
=0.
\end{equation*}
%Furthermore, by \cite[Proposition~3.4]{HKMZ}, 
%the map from the lattice of ideals in $P/I$ that are also right 
%$H/J$-comodules to the lattice of all ideals in $[B]$ given by
% $K\mapsto [B]\cap K$ is a monomorphism of 
%lattices. Hence $\bigcap_{i\in\{1,\cdots,N\}}\ker\bar{\pi_i}=0$.
It also follows that the lattice generated by $\ker\bar{\pi_i}|_{[B]}$'s
is distributive because the lattice generated by $\ker\pi_i|_B$'s is distributive and 
$\ker\bar{\pi_i}|_{[B]}=[\ker\pi_i|_B]\cong \ker\pi_i|_B$ for all $i$. Hence 
$\{\bar\pi_i|_{[B]}\}_i$ is a covering of $[B]$ as needed.

% It also follows that the lattice generated by $\ker\bar{\pi_i}|_{[B]}$'s
% is distributive because it is isomorphic with the distributive
% lattice generated by all ideals $\ker\bar{\pi_i}|_{[B]}=[\ker\pi_i|_B]\cong \ker\pi_i|_B$.

Finally, to prove that the piecewise trivialisation of $P$ induces a
piecewise trivialisation of~$P/I$, it suffices to note that
the trivialisations (colinear algebra homomorphisms)
$\gamma_i$ descend to trvialisations of $\bar{P_i}$'s. Indeed, 
since for all $i$ we have 
$\gamma_i(J)\subseteq \pi_i(I)$, we conclude that there are maps
$\bar{\gamma}_i:H/J\ni [h]\mapsto [\gamma_i(h)]\in\bar{P}_i$. They
are colinear algebra homomorphisms, as needed. 
Summarising, we have shown that $P/I$ is a piecewise-trivial principal
$H/J$-comodule algebra, which ends the proof of one of the
implications asserted in Theorem~\ref{mainres}.

Conversely, now we want to prove that,
 if we can reduce a principal comodule algebra to a piecewise-trivial
principal comodule algebra,
then the comodule algebra we started from is piecewise-trivial
in a specific way. Our proof relies on the known fact that the
$H$-prolongation of a reduction of a principal $H$-comodule algebra is
isomorphic with this comodule algebra.
%It turns out that the cotensor product provides a required 
%construction. 
%The point about using this  construction is that the piecewise 
%principality will be 
%much easier to prove for the auxilliary space. The following lemma 
%provides a basic technical tool.

\begin{lemma}[\cite{qsng}]\label{cotensisomlem}
Let $P$ and $Q$ be principal comodule algebras over Hopf algebras
$H$ and $K$ respectively, let $g:H\rightarrow K$ be a morphism of Hopf 
algebras, and let $f:P\rightarrow Q$ be an algebra homomorphism that is 
colinear via~$g$. Assume also that $f$ restricted to $P^{co H}$  gives 
an isomorphism with $Q^{co K}$.
Then $P\cong Q\square^KH$ as comodule algebras.
%Let us denote by $\ell:H\rightarrow P\otimes P$, $h\mapsto\ls{h}\otimes 
%\rs{h}$
%a strong connection on $P$. Then the maps
%\begin{gather}
%F:P\longrightarrow Q\square^HK,\quad p\longmapsto\pi(p\sw{0})\otimes p
%\sw{1},\\
%G:Q\square^HK\longrightarrow P,\quad
%\sum_i q_i\otimes k_i\longmapsto\sum_iq_i\pi(\ls{k_i})\rs{k_i}
%\end{gather}
%are a pair of inverse left $B$-module, right $K$-comodule algebra 
%isomorphisms.
\end{lemma}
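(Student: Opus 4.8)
The plan is to prove Lemma~\ref{cotensisomlem} by exhibiting an explicit isomorphism $P\to Q\square_KH$ of comodule algebras, built from the data $f$ and $g$ together with the strong connection on $P$. First I would recall that since $f$ is an algebra homomorphism colinear via $g$, the map
\[
\Phi:P\ni p\longmapsto f(p\sw0)\otimes p\sw1\in Q\otimes H
\]
lands in the cotensor product $Q\square_KH$: indeed $(\Delta_Q\otimes\id)\Phi(p)=(f(p\sw0)\otimes g(p\sw1))\otimes p\sw2=(\id\otimes{}_H\Delta)\Phi(p)$ by colinearity of $f$ via $g$. One checks readily that $\Phi$ is an algebra map (because $f$ and $\Delta_P$ are) and is right $H$-colinear for the coaction $\id\otimes\Delta$ on $Q\square_KH$. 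So the content of the lemma is that $\Phi$ is bijective.

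For surjectivity and injectivity I would use the strong connection $\ell:H\to P\otimes P$, $h\mapsto\ls{h}\otimes\rs{h}$, on the principal comodule algebra $P$, and the analogous objects on $Q$. The candidate inverse is
\[
\Psi:Q\square_KH\ni q\otimes h\longmapsto q\,\big(f(\ls{h})\,?\,\big)\dots
\]
more precisely, the standard formula: since $B:=P^{co H}\cong Q^{co K}=:B'$ via $f$, and $P\cong (P\square_KH)$-type descriptions hold, I would define $\Psi(q\otimes h):=\big(f|_B\big)^{-1}\!\big(q\,f(\ls{h})\sw0\,\big)\dots$. The cleanest route: recall that for a principal $H$-comodule algebra, $P\cong P^{co H}\square$-reconstruction is encoded by the splitting \eqref{splitting}; here the natural map back is $q\otimes h\mapsto (f|_B)^{-1}(q\cdot f(\ls{h}))\cdot\rs{h}$, where one uses that $q\cdot f(\ls{h})\in Q^{co K}$ because the cotensor condition forces the $K$-degree of $q$ to cancel that of $f(\ls{h})$ via the strong-connection identities. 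Then verifying $\Psi\circ\Phi=\id_P$ uses the strong-connection identity $\ls{p\sw1}\,\rs{p\sw1}\,p\sw0=p$ (a consequence of unitality of $\ell$ plus \eqref{TransProp}), and $\Phi\circ\Psi=\id$ uses the cotensor relation together with colinearity of $\ell$. Both are routine Heynemann--Sweedler manipulations once the maps are pinned down.

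The main obstacle I anticipate is purely bookkeeping: getting the formula for $\Psi$ exactly right so that $q\,f(\ls{h})$ genuinely lies in $Q^{co K}=f(B)$ and the composite identities close up — this requires careful use of all three strong-connection axioms and of the hypothesis that $f|_{P^{co H}}$ is an isomorphism onto $Q^{co K}$ (without which $(f|_B)^{-1}$ is not even defined). A secondary point worth checking is that $\Phi$ really is colinear \emph{and} an algebra map simultaneously, and that the cotensor product $Q\square_KH$ is indeed a comodule algebra in the stated way, which needs $g$ to be a Hopf algebra map (so that $H$ is a left $K$-comodule algebra and the cotensor inherits a product). Since all of these are established techniques for principal comodule algebras, I would cite \cite{qsng} for the detailed computation and present here only the construction of $\Phi$ and its inverse, noting that bijectivity follows from the strong-connection calculus exactly as in the classical Ehresmann-reduction argument recalled in the introduction.
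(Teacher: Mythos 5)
The paper itself gives no proof of this lemma (it is quoted from \cite{qsng}), so there is nothing internal to compare against; judged on its own merits, your construction is correct and is the standard one. The canonical map $\Phi(p)=f(p\sw0)\otimes p\sw1$ indeed lands in $Q\square_KH$, is a right $H$-colinear algebra map, and your inverse built from a strong connection $\ell$ on $P$ does work. The one point that needs more care than your write-up gives it is the well-definedness of $\Psi$: for a general element $\sum_iq_i\otimes h_i$ of the cotensor product, the individual terms $q_if(\ls{h_i})$ are \emph{not} coaction invariant (nor are the single legs of $\ell$ meaningful separately), so $(f|_B)^{-1}$ cannot be applied termwise as your formula literally suggests. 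The clean formulation is to first check that $\Psi'\bigl(\sum_iq_i\otimes h_i\bigr):=\sum_iq_if(\ls{h_i})\otimes\rs{h_i}$ takes values in $Q^{\co K}\otimes P$: this follows from the cotensor relation combined with the left-leg colinearity $\ls{h}\sw{0}\otimes\ls{h}\sw{1}\otimes\rs{h}=\ls{h\sw{2}}\otimes S(h\sw{1})\otimes\rs{h\sw{2}}$ and the colinearity of $f$ via $g$, which make the $K$-legs collapse through $g(h\sw{1}S(h\sw{2}))=\varepsilon(h)1$. Then define $\Psi$ as multiplication after $((f|_B)^{-1}\otimes\id)\circ\Psi'$, exactly the same device by which the splitting \eqref{splitting} is made sense of in the paper. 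With that reading, $\Psi\circ\Phi=\id_P$ follows from $p\sw0\ls{p\sw1}\otimes\rs{p\sw1}\in B\otimes P$ together with $\ls{h}\rs{h}=\varepsilon(h)1$ (apply $\id\otimes\varepsilon$ to the first strong-connection identity), and $\Phi\circ\Psi=\id$ follows from the right-leg colinearity of $\ell$ and $\ls{h}\rs{h}\sw0\otimes\rs{h}\sw1=1\otimes h$, plus unitality of $f$. So the obstacle you flag as bookkeeping is genuinely only bookkeeping, and your argument, once phrased through $Q^{\co K}\otimes P$, is a complete proof.
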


First, we consider  cotensor products with trivial comodule algebras. 
\begin{lemma}\label{cotenstr}
Let  $\pi:H\rightarrow\bar H$ be an epimorphism of Hopf algebras.
 Assume that $\bar P$ is a smash product $\bar H$-comodule algebra and 
$\bar\gamma:\bar H\rightarrow \bar P$ 
is its trivialisation (a colinear algebra
homomorphism). Denote $D:={}^{\co\bar H}H$ and $B:=\bar P^{\co\bar H}$. 
Then $\bar P\square^{\bar H}H$ is a smash product $H$-comodule
algebra and $\gamma:=((\bar\gamma\circ\pi)\ot\id)\circ\Delta:
H\rightarrow P\square^{\bar H}H$ 
is a trivialisation satisfying
$\gamma(k\sw{1})b\gamma(S(k\sw{2}))=0$ for all $b\in B$ and 
$k\in\ker\pi$. 
\end{lemma}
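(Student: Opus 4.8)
The plan is to verify each assertion in turn, leaning on the structure of a cotensor product with a comodule algebra of the form $\bar\gamma\circ\pi$. First I would recall that since $\bar H$ is a Hopf algebra and $\pi$ is a Hopf algebra epimorphism, $H$ is a left $\bar H$-comodule algebra via ${}_H\Delta=(\pi\otimes\id)\circ\Delta$, so the cotensor product $\bar P\,\square_{\bar H}\,H$ makes sense as an algebra (a subalgebra of $\bar P\otimes H$, using that the left coaction on $H$ is an algebra map) and carries the right $H$-coaction $\id\otimes\Delta$; it is a right $H$-comodule algebra with coaction-invariants $(\bar P\,\square_{\bar H}\,H)^{\co H}=\bar P\,\square_{\bar H}\,D$, where $D={}^{\co\bar H}H$. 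The key structural point is that $\bar P\cong B\otimes\bar H$ as an $\bar H$-comodule algebra via $\bar\gamma$ (it is a smash product, hence $\bar\gamma$ is a colinear algebra iso onto a smash-product description), so that $\bar P\,\square_{\bar H}\,H\cong (B\otimes\bar H)\,\square_{\bar H}\,H\cong B\otimes(\bar H\,\square_{\bar H}\,H)=B\otimes H$ as vector spaces; I would make this identification explicit and check it intertwines the coactions, which shows $\bar P\,\square_{\bar H}\,H$ is a free $H$-comodule and a candidate smash product.

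Next I would produce the trivialising map. Set $\gamma:=((\bar\gamma\circ\pi)\otimes\id)\circ\Delta$. Colinearity: $(\gamma\otimes\id)\circ\Delta=(\id\otimes\Delta)\circ\gamma$ follows from coassociativity of $\Delta$ and the fact that $\bar\gamma\circ\pi$ is just applied in the first leg. That $\gamma$ lands in $\bar P\,\square_{\bar H}\,H$ rather than merely in $\bar P\otimes H$ is the cotensor condition $(\Delta_{\bar P}\otimes\id)\gamma(h)=(\id\otimes{}_H\Delta)\gamma(h)$, which reduces, after writing both sides in Sweedler notation, to the $\bar H$-colinearity of $\bar\gamma$ together with $\pi$ being a coalgebra map; this is a short calculation. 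That $\gamma$ is an algebra homomorphism: $\Delta$ is an algebra map, $\pi$ is an algebra map, $\bar\gamma$ is an algebra map, and the multiplication in $\bar P\,\square_{\bar H}\,H$ is the restriction of the componentwise multiplication in $\bar P\otimes H$, so $\gamma(h)\gamma(k)$ and $\gamma(hk)$ agree after expanding; unitality is immediate. Since a right $H$-colinear algebra homomorphism $H\to P$ is automatically a cleaving map (this is recalled in the preliminaries, ``smash product''), the existence of $\gamma$ exhibits $\bar P\,\square_{\bar H}\,H$ as a smash product and as its own trivialisation.

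Finally, the action property. For $k\in\ker\pi$ and $b\in B=\bar P^{\co\bar H}$, I must show $\gamma(k\sw1)\,b\,\gamma(S(k\sw2))=0$, where on the left $b$ is viewed inside $\bar P\,\square_{\bar H}\,H$ via its canonical inclusion $B\hookrightarrow(\bar P\,\square_{\bar H}\,H)^{\co H}$ (namely $b\mapsto$ the element corresponding to $b$ under the identification of the invariants). Expanding, $\gamma(k\sw1)b\gamma(S(k\sw2))$ has first tensor leg $\bar\gamma(\pi(k\sw1))\,b_{\bar P}\,\bar\gamma(\pi(S(k\sw3)))=\bar\gamma(\pi(k\sw1))\,b_{\bar P}\,\bar\gamma(S(\pi(k\sw3)))$ and second leg built from $k\sw2$; since $\pi$ is a Hopf algebra map, $\pi(k\sw1)\otimes\pi(S(k\sw3))$ is governed by $\Delta(\pi(k))$, and the first leg is exactly the Miyashita–Ulbrich-type action $\pi(k)\triangleright b$ computed in the \emph{trivial} $\bar H$-comodule algebra $\bar P$, which—because $b\in\bar P^{\co\bar H}$ and $\bar\gamma$ is a unital colinear algebra map—collapses to $\counit(\pi(k))\,b_{\bar P}$. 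As $k\in\ker\pi$, $\counit(\pi(k))=0$, so the whole expression vanishes. The main obstacle I anticipate is bookkeeping: keeping straight the three different multiplications (in $\bar P$, in $H$, in the cotensor product) and the precise identification of $B$ with a subalgebra of $\bar P\,\square_{\bar H}\,H$, so that the "action" in the statement is literally the conjugation by $\gamma$ inside $\bar P\,\square_{\bar H}\,H$; once that identification is pinned down, each verification is a routine Sweedler-notation computation using only that $\pi$ is a Hopf map and $\bar\gamma$ a colinear algebra map.
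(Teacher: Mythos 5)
Your overall route is the same as the paper's: check that $\gamma$ is a right $H$-colinear algebra homomorphism with image in $\bar P\square_{\bar H}H$ (hence a cleaving map, so the cotensor product is a smash product), and then compute $\gamma(k\sw{1})\,b\,\gamma(S(k\sw{2}))$ in Sweedler notation. Those parts are fine. But the justification of the final vanishing step contains a genuinely false claim. You assert that the action $\bar h\triangleright b:=\bar\gamma(\bar h\sw{1})\,b\,\bar\gamma(S(\bar h\sw{2}))$ collapses to $\counit(\bar h)\,b$ whenever $b\in\bar P^{\co\bar H}$, ``because $\bar\gamma$ is a unital colinear algebra map''. This is not true: for a smash product $\bar P=B\rtimes\bar H$ with $\bar\gamma(\bar h)=1\ot\bar h$, that expression is exactly the smash-product action of $\bar h$ on $b$, which is nontrivial in general (colinearity of $\bar\gamma$ says nothing about commutation with $B$; one would need $b$ to centralize $\mathrm{im}\,\bar\gamma$). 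Indeed, if your claim were correct, the hypothesis $J\triangleright_i B_i=0$ in Theorem~\ref{mainres} would be automatically satisfied, contrary to the whole setup of the paper (and to the $A(GL_q(2))$ example in the last section). Your conclusion survives only because you apply the claim at $\bar h=\pi(k)=0$, where the correct and much simpler argument is the one the paper gives: after the middle legs collapse via $k\sw{2}S(k\sw{3})=\counit(k\sw{2})1$, the first leg reads $\bar\gamma(\pi(k)\sw{1})\,b\,\bar\gamma(S(\pi(k)\sw{2}))$, which vanishes simply because $\pi(k)=0$ forces $\Delta_{\bar H}(\pi(k))=0$; no statement about the action on coinvariants is needed. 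Replace the ``collapse'' step by this observation and the computation is correct.

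A second, minor, slip: $(\bar P\square_{\bar H}H)^{\co H}$ is not $\bar P\square_{\bar H}D$ but $B\ot 1_H$ (this is the identification used later in Lemma~\ref{idsumcot}). Coinvariance under $\id\ot\Delta$ forces the second tensor legs to lie in $k1_H$, and then the cotensor condition forces the first legs into $B=\bar P^{\co\bar H}$; the elements of $\bar P\square_{\bar H}D$ with nontrivial $D$-legs are not coaction invariant. This misidentification is not load-bearing for the lemma (the embedding you actually use, $b\mapsto b\ot 1$, is the right one), but it should be corrected, since the smash-product structure is over the coinvariants.
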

\begin{proof}
For any $b\in B$ and $k\in \ker\pi$, we obtain
\begin{align}
\gamma(k\sw{1})b\gamma(S(k\sw{2}))
&=\bar\gamma(\pi(k\sw{1}))b\bar\gamma(\pi(S(k\sw{4})))\otimes 
k\sw{2}S(k\sw{3})\\
&=\bar\gamma(\pi(k)\sw{1})b\bar\gamma(S(\pi(k)\sw{2}))\otimes 1\nonumber\\
&=0.\nonumber
\end{align}
Now, as $\gamma$ is clearly a colinear algebra homomorphism, we conclude the proof.
\end{proof}

Next, we prove a distributivity result for cotensor products
that will be useful 
in the proof of the subsequent lemma.
\begin{lemma}\label{idsumcot}
Let $\bar P$ be a principal $\bar H$-comodule algebra 
with $B:={\bar P}^{\co\bar  H}$, and let $\pi:H\rightarrow\bar H$ be an epimorphism  of Hopf 
algebras. Assume also that the antipode of $H$ is bijective.
% and  $H$ is a principal left $\bar H$-comodule algebra. 
Let $\bar{K}_1,\bar{K}_2\subseteq \bar P$ be ideals and right $\bar H$-
subcomodules in $\bar P$.
Then
\begin{equation}
\label{idsumcoteq}
\bar{K}_1\square^{\bar H}H+\bar{K}_2\square^{\bar H}H=
(\bar{K}_1+\bar{K}_2)\square^{\bar H}H.
\end{equation}
\end{lemma}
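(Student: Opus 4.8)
The plan is to prove the non-trivial inclusion $(\bar K_1+\bar K_2)\square_{\bar H}H\subseteq \bar K_1\square_{\bar H}H+\bar K_2\square_{\bar H}H$, since the reverse inclusion is immediate from the definition of the cotensor product and the fact that $\bar K_i\subseteq \bar K_1+\bar K_2$. The idea is to linearize the problem using the principality of $\bar P$: because $\bar P$ is a principal $\bar H$-comodule algebra, it is a left $B$-module that is (faithfully) flat, and more to the point the cotensor functor $\bar P\,\square_{\bar H}(-)$ and related constructions are exact. First I would record that $\bar K_i\square_{\bar H}H$ is by Lemma~\ref{lplemma}-type reasoning determined by its intersection with $B$; more precisely, since $\bar K_i$ is an ideal and right $\bar H$-subcomodule of $\bar P$, we have $\bar K_i=L_iP$ where $L_i:=\bar K_i\cap B$ by Lemma~\ref{lplemma} (applied to the principal comodule algebra $\bar P$), and hence $\bar K_1+\bar K_2=(L_1+L_2)\bar P$.

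Next I would use a strong connection / splitting argument to compute $\bar K_i\square_{\bar H}H$ explicitly at the level of the base. The key point is that an element $t=\sum x_j\ot h_j\in\bar P\square_{\bar H}H$ with all $x_j\in \bar K_i$ can be rewritten, via the splitting \eqref{splitting} of the multiplication of $\bar P$ provided by a strong connection, so that the first tensor legs lie in $L_i=\bar K_i\cap B$. Concretely, applying $(\text{split})\ot\id$ and using the cotensor relation to move the coaction onto $H$, one obtains a decomposition of the arbitrary element of $(\bar K_1+\bar K_2)\square_{\bar H}H$ whose terms have first legs in $L_1+L_2\subseteq B$; then one splits $L_1+L_2$ term by term into an $L_1$-part and an $L_2$-part and reassembles two cotensors, one in $\bar K_1\square_{\bar H}H$ and one in $\bar K_2\square_{\bar H}H$. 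This is the same mechanism as in the proof of Lemma~\ref{lplemma}, now carried out inside $\bar P\ot H$ rather than inside $\bar P$.

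Alternatively, and perhaps more cleanly, I would phrase it homologically: tensoring the short exact sequences $0\to \bar K_i\to\bar P\to\bar P/\bar K_i\to 0$ and using that $\bar P\,\square_{\bar H}H$ is exact (again by principality of $\bar P$, which gives flatness of $\bar P$ as a module and hence exactness of the cotensor with $H$ regarded as an $\bar H$-bimodule — here the bijectivity of the antipode of $H$ is what lets us treat $H$ as a well-behaved $\bar H$-comodule from the left), one identifies $\bar K_i\square_{\bar H}H$ with the kernel of $\bar P\,\square_{\bar H}H\to (\bar P/\bar K_i)\square_{\bar H}H$. Then \eqref{idsumcoteq} becomes the assertion $\ker\alpha_1+\ker\alpha_2=\ker(\alpha_1\oplus\alpha_2$ composed with the appropriate quotient$)$, i.e.\ a statement that the two quotient maps $\bar P/\bar K_i$ of $\bar P$ have $\bar P/(\bar K_1+\bar K_2)$ as their pushout, which is transported through the exact functor. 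The main obstacle I anticipate is precisely justifying this exactness in the non-commutative, non-cocommutative setting: one must be careful that $H$ is an $(\bar H,\bar H)$- or at least a left $\bar H$-comodule in a way compatible with cotensoring, and that the surjections $\bar P\to\bar P/\bar K_i$ remain $\bar H$-colinear split (as $B$-modules) so that applying $\bar P\,\square_{\bar H}(-)$ — or rather $(-)\,\square_{\bar H}H$ — preserves the relevant exactness; this is where the hypothesis that the antipode of $H$ is bijective, and the principality of $\bar P$ (yielding faithful flatness over $B$), get used, and I would isolate that as a preliminary observation before doing the element-chase.
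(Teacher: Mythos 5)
Your first route is essentially a hands-on variant of the paper's argument rather than a different one in spirit: both start by invoking Lemma~\ref{lplemma} to write $\bar K_i=L_i\bar P$ with $L_i:=\bar K_i\cap B$. The paper's second step, however, is to apply Lemma~\ref{lplemma} \emph{again}, this time to the prolongation $\bar P\square_{\bar H}H$ viewed as a principal $H$-comodule algebra with coinvariants $B\otimes 1_H$, which yields $\bar K_i\square_{\bar H}H=(L_i\ot 1)(\bar P\square_{\bar H}H)$ in one stroke; then \eqref{idsumcoteq} follows by adding the base ideals and running Lemma~\ref{lplemma} backwards. You instead propose to redo the splitting mechanism by hand inside $\bar P\ot H$. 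This can be made to work (and has the mild advantage of not invoking principality of the prolongation), but the crux is exactly the step you only gesture at with ``using the cotensor relation to move the coaction onto $H$'': you must check that $s\ot\id_H$, with $s$ the splitting \eqref{splitting}, maps $\bar P\square_{\bar H}H$ into $B\ot(\bar P\square_{\bar H}H)$. Concretely, apply $s$ to the first leg of the cotensor identity for $t=\sum_j x_j\ot h_j$ and use the colinearity property of the strong connection, $\ell(h\sw{1})\Sta\ot\ell(h\sw{1})\Stb\ot h\sw{2}=\ell(h)\Sta\ot\ell(h)\Stb\sw{0}\ot\ell(h)\Stb\sw{1}$, to see that the last two legs of $\sum_j x_j\sw{0}\ell(x_j\sw{1})\Sta\ot\ell(x_j\sw{1})\Stb\ot h_j$ still satisfy the cotensor relation; only then may you split the $B$-coefficients (which lie in $B\cap(\bar K_1+\bar K_2)=L_1+L_2$) into $L_1$- and $L_2$-parts and reassemble, using that left multiplication by coinvariant elements preserves the cotensor condition, so $L_i(\bar P\square_{\bar H}H)\subseteq\bar K_i\square_{\bar H}H$. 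Without this check the naive rewriting $x_j=\sum_k b_{jk}p_{jk}$ proves nothing, since the individual tensors $p_{jk}\ot h_j$ need not lie in $\bar P\square_{\bar H}H$.

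Your ``cleaner'' homological alternative, by contrast, has a genuine gap. It requires $(-)\square_{\bar H}H$ to be exact, i.e.\ $H$ to be coflat as a left $\bar H$-comodule, and this is neither among the hypotheses of the lemma nor a consequence of the bijectivity of the antipode of $H$; it also has nothing to do with flatness of $\bar P$ over $B$ coming from principality of $\bar P$, which concerns a different (co)tensoring altogether. In this paper coflatness of $H$ over $\bar H$ is only obtained later, in Lemma~\ref{almostinv}, from the \emph{additional} assumption that $H$ is a principal left $\bar H$-comodule algebra. Note, moreover, that if such exactness were available, the lemma would be immediate for arbitrary right $\bar H$-subcomodules with no ideal or principality hypotheses at all, because an exact functor preserves images and $\bar K_1+\bar K_2$ is the image of $\bar K_1\oplus\bar K_2\to\bar P$; the whole point of the ideal hypotheses and of Lemma~\ref{lplemma} is to get \eqref{idsumcoteq} without assuming coflatness. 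So keep the first route, and supply the colinearity verification that makes it rigorous.
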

\begin{proof}
Let us denote $L_i:=B\cap \bar{K}_i$, $i=1,2$, for brevity. 
Using Lemma~\ref{lplemma}, we get 
\begin{equation}
\bar{K}_i=(\bar{K}_i\cap B)\bar P=L_i\bar{P},\quad i=1,2.
\end{equation}
Similarly, as $\bar{P}\square^{\bar H}H$ is a principal $H$-comodule
algebra
with $(\bar{P}\square^{\bar H}H)^{\co H}=B\otimes 1_H$, we can 
again apply Lemma~\ref{lplemma}  to obtain
\begin{equation*}
\bar{K}_i\square^{\bar H}H=\left(
(B\otimes 1_H)\cap \bar{K}_i\square^{\bar H}H
\right)(\bar{P}\square^{\bar H}H)
=((B\cap\bar{K}_i)\bar{P})\square^{\bar H}H
=L_i\bar{P}\square^{\bar H}H,\quad i=1,2.
\end{equation*}
Hence,
\begin{align}
\bar{K}_1\square^{\bar H}H+\bar{K}_2\square^{\bar H}H
&=L_1\bar{P}\square^{\bar H}H+L_2\bar{P}\square^{\bar H}H\nonumber\\
&=(L_1+L_2)\bar{P}\square^{\bar H}H\nonumber\\
&=((L_1+L_2)\bar P)\square^{\bar H}H\nonumber\\
&=(L_1\bar P+L_2\bar P)\square^{\bar H}H\nonumber\\
&=(K_1+K_2)\square^{\bar H}H,
\end{align}
as needed.
\end{proof}

Now we are ready to generalize  Lemma~\ref{cotenstr} from trivial 
comodule algebras to
 piecewise-trivial comodule algebras.
\begin{lemma}\label{almostinv}
Let $\bar P$ be a piecewise-trivial principal $\bar H$-comodule algebra 
with $B:={\bar P}^{\co\bar  H}$, let 
$\{\bar \pi_i:\bar P\rightarrow {\bar P}_i\}_{i\in\{1,\cdots,N\}}$ 
be a covering of $\bar P$, and let
 $\{{\bar\gamma}_i:\bar H\rightarrow {\bar P}_i\}_{i\in\{1,\cdots,N\}}$ 
be a family of 
trivialisations (colinear algebra homomorphisms). 
Assume also that $\pi:H\rightarrow\bar H$ is an epimorphism  of Hopf 
algebras,  the antipode of $H$ is bijective, and  
$H$ is a principal left $\bar H$-comodule algebra. 
Then $\bar P\square^{\bar H} H$ is a piecewise-trivial principal 
comodule algebra for the covering
\begin{equation}\label{family}
\{\bar\pi_i\square^{\bar H}\id_H:\bar P\square^{\bar H}H\longrightarrow 
{\bar P}_i\square^{\bar H}H\}_{i\in\{1,\cdots,N\}}\,,
\end{equation}
the maps
\begin{equation}\label{triv}
\{H\ni k\stackrel{\gamma_i}{\longmapsto}
\bar\gamma_i(\pi(k\sw{1}))\otimes k\sw{2}\in 
{\bar P}_i\square^{\bar H}H\}_{i\in\{1,\cdots,N\}}
\end{equation}
 are trivialisations
satisfying $\gamma_i(k\sw{1})b\gamma_i(S(k\sw{2}))=0$
 for all $b\in {\bar P}_i^{\co\bar H}\otimes 1$,
$k\in \ker \pi$, and
the associated transition functions  $T_{ij}$ (see~\eqref{trfundef}) 
fulfill
$T_{ij}(\ker \pi)=0$ for all $i,j\in\{1,\cdots,N\}$.
\end{lemma}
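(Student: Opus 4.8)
The plan is to prove Lemma~\ref{almostinv} by reducing its several assertions to the previously established results, chiefly Lemma~\ref{cotenstr}, Lemma~\ref{idsumcot}, and Proposition~\ref{pullisom2}. First I would check that \eqref{family} is genuinely a covering of $\bar P\square_{\bar H}H$. Since $H$ is a principal left $\bar H$-comodule algebra, the functor ${}-\square_{\bar H}H$ is exact (by left faithful flatness of $H$ over $\bar H$, which principality provides), so from $\bigcap_i\ker\bar\pi_i=0$ we get $\bigcap_i(\ker\bar\pi_i\square_{\bar H}H)=0$, and $\ker(\bar\pi_i\square_{\bar H}\id_H)=\ker\bar\pi_i\square_{\bar H}H$. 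Distributivity of the lattice generated by the $\ker\bar\pi_i\square_{\bar H}H$'s follows because $\cdot+\cdot$ is handled by Lemma~\ref{idsumcot} and $\cap$ is preserved by the exact functor; since $\{\bar\pi_i\}_i$ is a covering and the $\ker\bar\pi_i$'s are ideals and $\bar H$-subcomodules of the principal comodule algebra $\bar P$ (hence closed under $+$ and $\cap$ by \cite[Proposition~3.4]{HKMZ}), the image lattice under $\cdot\square_{\bar H}H$ is again distributive.

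Next I would address the maps $\gamma_i$ of \eqref{triv}. These are exactly the maps produced by applying Lemma~\ref{cotenstr} to each $\bar P_i$ in place of $\bar P$, with the same epimorphism $\pi:H\rightarrow\bar H$ and trivialisation $\bar\gamma_i$; indeed $\gamma_i=((\bar\gamma_i\circ\pi)\otimes\id)\circ\Delta$ written out is $k\mapsto\bar\gamma_i(\pi(k\sw1))\otimes k\sw2$. So Lemma~\ref{cotenstr} immediately gives that each $\bar P_i\square_{\bar H}H$ is a smash product $H$-comodule algebra, that $\gamma_i$ is a colinear algebra homomorphism (a trivialisation), and that $\gamma_i(k\sw1)b\gamma_i(S(k\sw2))=0$ for all $b\in\bar P_i^{\co\bar H}\otimes 1=(\bar P_i\square_{\bar H}H)^{\co H}$ and $k\in\ker\pi$. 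That $(\bar P_i\square_{\bar H}H)^{\co H}=\bar P_i^{\co\bar H}\otimes 1_H$ is the standard identification used already in Lemma~\ref{idsumcot}. Moreover, one must check that the restrictions of $\bar\pi_i\square_{\bar H}\id_H$ to the coaction-invariant subalgebra $B\otimes 1_H$ form a covering of $B\otimes 1_H$: but this restriction is just $\bar\pi_i|_B\otimes\id$ under the identification $(\bar P_i\square_{\bar H}H)^{\co H}=\bar P_i^{\co\bar H}\otimes 1_H$, and $\{\bar\pi_i|_B\}_i$ is a covering of $B$ by hypothesis (piecewise triviality of $\bar P$). Hence $\{\gamma_i\}_i$ is a piecewise trivialisation of $\bar P\square_{\bar H}H$ with respect to the covering \eqref{family}, and in particular $\bar P\square_{\bar H}H$ is a piecewise trivial principal $H$-comodule algebra.

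It remains to verify the transition-function condition $T_{ij}(\ker\pi)=0$. By \eqref{trfundef}, $T_{ij}=(\pi^i_j\circ\gamma_i)*(\pi^j_i\circ\gamma_j\circ S)$, where now $\pi^i_j$ is the canonical surjection of $\bar P_i\square_{\bar H}H$ onto $(\bar P\square_{\bar H}H)/(\ker(\bar\pi_i\square\id)+\ker(\bar\pi_j\square\id))$. The point is that on coaction-invariants everything takes place in the base algebras: applying $\pi^i_j$ to $\gamma_i(k\sw1)\otimes$-component, and using $\ker(\bar\pi_i\square\id)+\ker(\bar\pi_j\square\id)=(\ker\bar\pi_i+\ker\bar\pi_j)\square_{\bar H}H$ from Lemma~\ref{idsumcot}, one identifies $\pi^i_j\circ\gamma_i$ with the map $k\mapsto\overline{\bar\gamma_i(\pi(k\sw1))}\otimes k\sw2$ factoring through $\bar\pi^i_j$ on $\bar P_i$. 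I would then compute $T_{ij}$ directly: for $k\in H$,
\[
T_{ij}(k)=\bar\pi^i_j(\bar\gamma_i(\pi(k\sw1)))\,\bar\pi^j_i(\bar\gamma_j(S(\pi(k\sw2))))\otimes(\text{something in }k)
\]
which, after the standard Hopf-algebra bookkeeping with $\Delta$ and using that $\bar\gamma_j$ is an algebra map so $\bar\gamma_j\circ S$ is its convolution inverse, collapses to a quantity depending only on $\pi(k)$; in fact $T_{ij}=\bar T_{ij}\circ\pi$ where $\bar T_{ij}$ is the transition function of the given piecewise trivialisation $\{\bar\gamma_i\}$ of $\bar P$. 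Consequently $T_{ij}(\ker\pi)=\bar T_{ij}(\pi(\ker\pi))=\bar T_{ij}(0)=0$ — or more simply, $T_{ij}$ is built from compositions with $\pi$ applied to each Sweedler leg, so it annihilates $\ker\pi$ because $\pi(k\sw1)\otimes\pi(k\sw2)=\Delta(\pi(k))=0$ when $k\in\ker\pi$ (as $\pi$ is a coalgebra map and $\ker\pi$ is a coideal). The main obstacle I anticipate is the careful identification, via Lemma~\ref{idsumcot}, of the iterated quotients $(\bar P\square_{\bar H}H)/((\ker\bar\pi_i+\ker\bar\pi_j)\square_{\bar H}H)$ with $(\bar P_{ij})\square_{\bar H}H$ for $\bar P_{ij}:=\bar P/(\ker\bar\pi_i+\ker\bar\pi_j)$, so that the $\pi^i_j$ for the new covering really do arise by applying $\cdot\square_{\bar H}H$ to the $\bar\pi^i_j$ of the old one — once that compatibility is in place, the colinearity observation that the images of all $T_{ij}$ lie in the base algebra, combined with $\Delta\circ\pi$ killing $\ker\pi$, finishes the argument cleanly.
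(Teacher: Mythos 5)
Your proposal is correct and follows essentially the same route as the paper: surjectivity and the covering of the coinvariant subalgebra via coflatness of $H$ over $\bar H$, Lemma~\ref{cotenstr} applied componentwise to get the trivialisations $\gamma_i$ and the annihilation property, and Lemma~\ref{idsumcot} together with left exactness of the cotensor functor to identify $\pi^i_j$ with $\bar\pi^i_j\otimes\id_H$ (up to the canonical isomorphism), after which the Sweedler computation shows $T_{ij}$ factors through $\pi$ and hence vanishes on the Hopf ideal $\ker\pi$. The only nitpick is terminological: the exactness you need is coflatness of $H$ as a left $\bar H$-comodule (the paper cites \cite[Theorem~II.3.26]{qsng}), not ``faithful flatness'', though this does not affect the substance of your argument.
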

\begin{proof}
First note that since the principality of $H$ implies the coflatness of 
$H$ as a left 
$\bar H$-comodule \cite[Theorem~II.3.26]{qsng}, it follows that
 the maps $\bar\pi_i\otimes \id$ are all surjective.
Because $\{\bar\pi_i|_B\}_i$ is a covering of $B$,
it is immediate that 
$\{\left.\bar\pi_i\otimes\id_H\right|_{B\otimes 1_H}\}_i$ is a covering 
of $B\otimes 1_H=\left(\bar P\square^{\bar H} H\right)^{\co H}$.

%First we prove that 
%$\{\bar\pi_i\otimes\id_H:\bar P\square^{\bar H}H\rightarrow 
%{\bar P}_i\square^{\bar H}H\}_i$ is a weak covering.
%Since the principality of $H$ implies the coflatness of $H$ as a left 
%$\bar H$-comodule \cite[Theorem~II.3.26]{qsng}, it follows that
% the maps $\bar\pi_i\otimes \id$ are all surjective.
%On the other hand, the left exactness of the cotensor product functor
%implies that 
%$\bigcap_i\ker(\bar\pi_i\square^{\bar H}\id_H)=
%\bigcap_i\ker(\bar\pi_i)\square^{\bar H}H
%\subseteq\bigcap_i\ker\bar\pi_i\otimes H=0$.
%Furthermore, 
%as $(\bar P\square^{\bar H}H)^{\co H}=B\otimes 1$ and
% $(\bar P_i\square^{\bar H}H)^{\co H}={\bar P}_i^{\co\bar H}\otimes 1$
%for all $i$, the covering  
%$\{\bar\pi_i|_B: B\rightarrow{\bar P}_i^{\co\bar H}\}_i$
%induces the covering 
%$\{\bar\pi_i|_B\otimes\id
%: B\otimes 1\rightarrow{\bar P}_i^{\co\bar H}\otimes 1\}_i$.
%Hence $\bar P\square^{\bar H}H$ is a piecewise trivial principal
%comodule algebra. By \cite[Corollary~3.9]{HKMZ}, the weak covering
%\eqref{family} is a covering.

Next, from Lemma~\ref{cotenstr}, we conclude that all
the trivialisations \eqref{triv} satisfy
\begin{equation}
\gamma_i(k\sw{1})b\gamma_i(S(k\sw{2}))=0\quad\text{for all}\quad b\in {\bar P}_i^{\co\bar H}\otimes 1,
\;k\in \ker \pi.
\end{equation}
  Finally, we prove the desired property of the
associated transition functions.
The left exactness of the cotensor functor implies
that $\ker(\bar\pi_i\square^{\bar H}\id_H)
=(\ker\bar\pi_i)\square^{\bar H}H$. Combining this with 
Lemma~\ref{idsumcot} and the 
left coflatness of $H$ over $\bar H$,
we obtain the canonical isomorphism $\phi$
\begin{align}
(\bar P\square^{\bar H}H)/
(\ker(\bar\pi_i\square^{\bar H}\id_H)+
\ker(\bar\pi_j\square^{\bar H}\id_H))
&=(\bar P\square^{\bar H}H)/
(\ker(\bar\pi_i)\square^{\bar H}H+
\ker(\bar\pi_j)\square^{\bar H}H)\nonumber\\
&=(\bar P\square^{\bar H}H)/
(\ker\bar\pi_i+\ker\bar\pi_j)\square^{\bar H}H\nonumber\\
&\cong
({\bar P}/(\ker\bar\pi_i+\ker\bar\pi_j))\square^{\bar H}H.
\end{align}
Hence, we conclude
 that $\pi^i_j=\phi^{-1}\circ(\bar\pi^i_j\otimes\id_H)$
for all $i$ and $j$.
Therefore, 
we can write the transition functions (see \eqref{trfundef}) as
\begin{align}
T_{ij}(k)&=\pi^i_j(\gamma_i(k\sw{1}))\pi^j_i(\gamma_j(S(k\sw{2})))\nonumber\\
&=\phi^{-1}(\bar\pi^i_j(\bar\gamma_i(\pi(k\sw{1})))\bar\pi^j_i(\bar\gamma_j(\pi(S(k\sw{4}))))\otimes 
k\sw{2}S(k\sw{3}))\nonumber\\
&=\phi^{-1}(\bar\pi^i_j(\bar\gamma_i(\pi(k\sw{1})))\bar\pi^j_i(\bar\gamma_j(\pi(S(k\sw{2}))))\otimes 1_K).
\end{align}
Now the equality $T_{ij}(J)=0$ for any $i$ and $j$ follows from the fact
that  $J:=\ker\pi$ is a Hopf ideal.
\end{proof}

Summarising, it follows from Lemma~\ref{almostinv} and 
 Lemma~\ref{cotensisomlem} that, if a principal comodule algebra $P$
is reducible to a piecewise-trivial principal comodule algebra $\bar P$, 
then there exists a trivialisation of $P$ satisfying the two conditions
of the theorem.

\section{Noncommutative bundles over the Toeplitz deformation
of $\mathbb{R}P^2$}

\subsection{A quantum real projective space}

A new type of a noncommutative deformation of complex
projective spaces was constructed in~\cite{HKZ}. The construction is based on the
idea of covering a complex projective space by Cartesian powers of
closed discs (a compact restriction of the canonical affine covering).
Then discs are replaced by quantum discs \cite{kl93} given in terms of
the Toeplitz algebra~$\mathcal{T}$.
For real projective spaces $\mathbb{R}P^N$, $N-1\in\mathbb{N}$, a suitable compact restriction of the 
canonical affine covering is given by cubes $I^N$, where $I$ is the real unit disc, i.e.\ $I:=[-1,1]$. 
Now we replace $I^{2k}$ by $\mathcal{T}^{\otimes k}$ 
%and 
%$I^{2k+1}$ by $\mathcal{T}^{\otimes k}\otimes_{\mathrm{min}}C(I)$.
%Thus in the real case we are forced to consider the even and odd 
%dimension separately. 

Here we carry out the aforementioned construction for $N=2$. Hence,
the C*-algebra of our quantum $\mathbb{R}P^2$ will be a triple-pullback C*-algebra obtained from 
three Toeplitz algebras viewed
this time as the C*-algebras of quantum squares rather than quantum discs.
We consider the Toeplitz algebra~$\mathcal{T}$ as the universal
C*-algebra generated by an isometry $s$, and the symbol map
given by the assignment 
$\sigma\colon\mathcal{T}\ni s\mapsto \widetilde{u}\in C(S^1)$, where 
$\widetilde{u}$ is the unitary function
generating $C(S^1)$. Now we are ready ``to square the bounadry circle"
of the quantum disc with the help of the
following two maps
\begin{equation}
\ztwo\times I\ni(k,t)\stackrel{\delta_1}{\longmapsto} e^{i\pi(\frac{1}{4}kt+\frac{1}{2}k+\frac{3}{2})}\in 
S^1,\quad I\times\ztwo\ni(t,k)\stackrel{\delta_2}{\longmapsto} e^{i\pi(-\frac{1}{4}kt-\frac{1}{2}k+1)}\in 
S^1,
\label{deltadef}
\end{equation}
and their pullbacks
\begin{equation}\label{deltadef*}
\delta^*_1\colon C(S^1)\longrightarrow C(\ztwo)\otimes C(I),\quad
\delta^*_2\colon C(S^1)\longrightarrow  C(I)\otimes C(\ztwo).
\end{equation}
We will denote, for brevity, $\sigma_i:=\delta^*_i\circ\sigma$, 
$i=1,2$. Each
of the maps $\delta_i$ can be understood as a parametrisation of two appropriate quarters of $S^1$ as shown 
on the pictures below:
\begin{center}\label{pbelow}
\begin{tabular}{cc}
\begin{tikzpicture}[scale=0.7]
\fill (0,0) circle (0.5mm);
\draw (0,0) circle (2cm);
\draw[very thin, dashed] (45:2cm)--(225:2cm);
\draw[very thin, dashed] (135:2cm)--(315:2cm);
\draw[->,very thick,color=red] (225:2cm) arc (225:135:2cm);
\draw[->,very thick,color=red,rotate=-45] (0:2cm) arc (0:90:2cm); 
\fill (45:2cm) circle (0.5mm) (135:2cm) circle (0.5mm)(225:2cm) circle (0.5mm) (315:2cm) circle (0.5mm);
\draw (-2cm,0) node[anchor=east]{{\footnotesize $k=-1$}};
\draw (2cm,0) node[anchor=west]{{\footnotesize $k=1$}};
\draw (0,2cm) node[anchor=south]{{\footnotesize $\phantom{k=1}$}};
\draw (0,-2cm) node[anchor=north]{{\footnotesize $\phantom{k=-1}$}};
\draw (45:2cm) node[anchor=south west]{{\footnotesize $\delta_1(1,1)=e^{i\frac{9\pi}{4}}$}};
\draw (315:2cm) node[anchor=north west]{{\footnotesize $\delta_1(1,-1)=e^{i\frac{7\pi}{4}}$}};
\draw (135:2cm) node[anchor=south east]{{\footnotesize $\delta_1(-1,1)=e^{i\frac{3\pi}{4}}$}};
\draw (225:2cm) node[anchor=north east]{{\footnotesize $\delta_1(-1,-1)=e^{i\frac{5\pi}{4}}$}};
\end{tikzpicture} 

& \begin{tikzpicture}[scale=0.7]
\fill (0,0) circle (0.5mm);
\draw (0,0) circle (2cm);
\draw[very thin, dashed] (45:2cm)--(225:2cm);
\draw[very thin, dashed] (135:2cm)--(315:2cm);
\draw[->,very thick,color=red] (135:2cm) arc (135:45:2cm);
\draw[->,very thick,color=red] (225:2cm) arc (225:315:2cm); 
\fill (45:2cm) circle (0.5mm) (135:2cm) circle (0.5mm)(225:2cm) circle (0.5mm) (315:2cm) circle (0.5mm);
\draw (0,2cm) node[anchor=south]{{\footnotesize $k=1$}};
\draw (0,-2cm) node[anchor=north]{{\footnotesize $k=-1$}};
\draw (45:2cm) node[anchor=south west]{{\footnotesize $\delta_2(1,1)=e^{i\frac{\pi}{4}}$}};
\draw (315:2cm) node[anchor=north west]{{\footnotesize $\delta_2(1,-1)=e^{i\frac{7\pi}{4}}$}};
\draw (135:2cm) node[anchor=south east]{{\footnotesize $\delta_2(-1,1)=e^{i\frac{3\pi}{4}}$}};
\draw (225:2cm) node[anchor=north east]{{\footnotesize $\delta_2(-1,-1)=e^{i\frac{5\pi}{4}}$}};
\end{tikzpicture} \,. 
\end{tabular}
\end{center} 

We view $S^1$ and $I$ as $\ztwo$-spaces via multiplication by $\pm 1$.
Then $\ztwo\times I$ and $I\times \ztwo$ are $\ztwo$-spaces with the diagonal action. Accordingly, $C(I)$, $C(S^1)$,  
$C(\ztwo)\otimes C(I)$ and $C(I)\otimes C(\ztwo)$ are right 
$C(\ztwo)$-comodule algebras with coactions given by the pullbacks of respective $\ztwo$-actions.  
Denote by $u$
the generator $C(\ztwo)$ given by $u(\pm 1):=\pm 1$.
 Then the assignment $s\mapsto s\otimes u$ makes $\tplz$ a 
$C(\ztwo)$-comodule algebra. (This coaction corresponds to
the $\ztwo$-action given by $\alpha_{-1}^\tplz(s)=-s$.)
It is easy to verify that the maps $\delta_i$, $i=1,2$, are 
$\ztwo$-equivariant, so  their pullbacks
$\delta^*_i$'s  are right $C(\ztwo)$-comodule maps. Also, since
the symbol map $\sigma$ is a right $C(\ztwo)$-comodule map,  so are 
$\sigma_i$'s.

Now we are ready to define the C*-algebra $\qcp 2$
of our Toeplitz deformation
of the real projective plane. We take three copies of the
Toeplitz algebra $\tplz$, distinguish them by subscripts for clarity,
and write the building blocks of a triple pullback
diagram as follows:
\begin{gather}
\xymatrix{
\tplz_0 \ar[d]_{\sigma_1}& \tplz_1 \ar[d]^{\sigma_1}\\
C(\ztwo)\otimes C(I) & \ar[l]^{\Psi_{01}} C(\ztwo)\otimes C(I)
\,,}\quad
\xymatrix{
\tplz_0 \ar[d]_{\sigma_2}& \tplz_2 \ar[d]^{\sigma_1}\\
 C(I)\otimes C(\ztwo) & \ar[l]^{\Psi_{02}} C(\ztwo)\otimes C(I)
\,,}\nonumber\\
\xymatrix{
\tplz_1 \ar[d]_{\sigma_2}& \tplz_2 \ar[d]^{\sigma_2}\\
C(I)\otimes C(\ztwo) & \ar[l]^{\Psi_{12}} C(I)\otimes C(\ztwo)
\,.}
\end{gather}
Here the isomorphisms $\Psi_{ij}$ are given by formulae analogous
to the formulae used in \cite{HKZ}, that is: 
%\begin{subequations}
 \begin{align}
 C(\B{Z}_2)\otimes C(I)\ni u\otimes x&\stackrel{\Psi_{01}}{\longrightarrow} x\sw{1}u\otimes x\sw{0} \in C(\B{Z}_2)\otimes C(I),
\nonumber\\
C(\B{Z}_2)\otimes C(I)\ni u\otimes x&\stackrel{\Psi_{02}}{\longrightarrow} x\sw{0}\otimes x\sw{1}u \in C(I)\otimes C(\B{Z}_2),\\
C(I)\otimes C(\B{Z}_2) \ni x\otimes u&\stackrel{\Psi_{12}}{\longrightarrow}x\sw{0}\otimes x\sw{1}u\in   C(I)\otimes C(\B{Z}_2).\nonumber
 \end{align}
Putting all this together, we define the C*-algebra
\begin{align}
\qcp 2:=\bigl\{ 
(t_0,t_1,t_2)\in\tplz^3\;|\;\sigma_1(t_0)&=(\Psi_{01}\!\circ\!\sigma_1)(t_1),\\ \;\sigma_2(t_0)&=(\Psi_{02}\!
\circ\!\sigma_1)(t_2),\nonumber\\
\;\sigma_2(t_1)&=(\Psi_{12}\!\circ\!\sigma_2)(t_2)\bigr\}.\nonumber
\end{align}

\subsection{From $\proj R 2$ to a quantum 2-sphere}
\label{2.2}

The usual way of constructing real projective spaces is by taking
$\ztwo$-quotients of spheres. Here we reverse this procedure, i.e.\
we  treat
projective spaces as primary objects, and construct spheres from them. 
%projective spaces.
More precisely, since each cube covering a real projective space
is contractible, any principal bundle over such a cube must be trivial. Consequently, as the fiber of each 
principal bundle
$S^N\rightarrow\mathbb{R}P^N$, $N-1\in\mathbb{N}$, is $\ztwo$, we can assemble any sphere by 
appropriate glueing of pairs of cubes.
In particular, for $N=2$, we construct the topological 2-sphere
by assembling three pairs of squares to the boundary of a cube.

Our aim is to construct a quantum sphere $S^2_{\mathbb{R}\tplz}$ as a
 $\ztwo$-bundle over  $\proj R 2$. To this end, we take
$\tplz\otimes C(\ztwo)$ as basic  ingredients, and write
building blocks of a triple-pullback diagram
as follows:
\begin{gather}
\xymatrix{
\tplz_0\otimes C(\ztwo) \ar[d]_{\sigma_1\otimes\id}& \tplz_1\otimes C(\ztwo) \ar[d]^{\sigma_1\otimes\id}\\
C(\ztwo)\otimes C(I)\otimes C(\ztwo) & \ar[l]^{\widetilde\Phi_{01}} C(\ztwo)\otimes C(I)\otimes C(\ztwo)
\,,}\nonumber\\
~ \nonumber\\
\xymatrix{
\tplz_0\otimes C(\ztwo) \ar[d]_{\sigma_2\otimes\id}& \tplz_2\otimes C(\ztwo) \ar[d]^{\sigma_1\otimes\id}\\
 C(I)\otimes C(\ztwo)\otimes C(\ztwo) & \ar[l]^{\widetilde\Phi_{02}} C(\ztwo)\otimes C(I)\otimes C(\ztwo)
\,,}\nonumber\\
~ \nonumber\\
\xymatrix{
\tplz_1\otimes C(\ztwo) \ar[d]_{\sigma_2\otimes\id}& \tplz_2\otimes C(\ztwo) \ar[d]^{\sigma_2\otimes\id}\\
C(I)\otimes C(\ztwo)\otimes C(\ztwo) & \ar[l]^{\widetilde\Phi_{12}} C(I)\otimes C(\ztwo)\otimes C(\ztwo)
\,.}\label{gluecub}
\end{gather}
Here the isomorphisms  $\widetilde\Phi_{ij}$ are given by the formulae
(see \cite[eq.~9]{CM02})
\begin{equation}
\widetilde\Phi_{ij}(a\otimes b\otimes h):=\Psi_{ij}(a\otimes b)T_{ij}(h\sw{1})\otimes h\sw{2}\,,\quad 
i,j\in\{0,1,2\},\quad i<j,
\end{equation}  
for some  transitions functions $T_{ij}$ that will be determined later.
Now we can define our triple-pullback C*-algebra in the following
way:
\begin{align}
\widetilde{C(S^2_{\B R\tplz})}:=\bigl\{ 
(t_i\otimes u_i)_i\in
(\tplz\otimes C(\ztwo))^3\;|\;(\sigma_1\otimes\id)(t_0\otimes 
u_0)&=\bigl(\widetilde\Phi_{01}\circ(\sigma_1\otimes\id)\bigl)(t_1\otimes u_1),\nonumber\\ \;
(\sigma_2\otimes\id)(t_0\otimes u_0)&=\bigl(\widetilde\Phi_{02}\circ(\sigma_1\otimes\id)\bigr)(t_2\otimes 
u_2),\nonumber\\
\;(\sigma_2\otimes\id\bigr)(t_1\otimes u_1)&=\bigl(\widetilde\Phi_{12}\circ(\sigma_2\otimes\id))(t_2\otimes 
u_2)\bigr\}.
\end{align}

If we consider the natural $\ztwo$-actions
on the rightmost tensorands of the components of 
$\widetilde{C(S^2_{\B R\tplz})}$, 
all maps in the diagram \eqref{gluecub}
 are 
$\ztwo$-equivariant C*-homomorphisms. 
%On the  other hand, we can also consider the diagonal $\ztwo$-action. 
Thus, we obtain a $\ztwo$-action on $\widetilde{C(S^2_{\B R\tplz})}$
such that its fixed-point subalgebra satisfies
\[\label{equal}
\widetilde{C(S^2_{\B R\tplz})}^{\ztwo}=C(\proj R2)\otimes\B C.
\]
Trading the above action for coaction, we can view the components
of $\qcube$ as trivial $C(\ztwo)$-comodule algebras. 
However, to see that the quantum real projective space $\proj R2$ corresponds to the quotient of 
$S^2_{\B R\tplz}$ by the antipodal
 $\ztwo$-action, we need to  transform $\widetilde{C(S^2_{\B R\tplz})}$
 into an appropraite isomorphic $\ztwo$-C*-algebra. To this end, 
we need to gauge the aforementioned  $\ztwo$-actions on components
to the diagonal $\ztwo$-actions thereon.
We transform the former  into the latter 
by conjugating
all maps of \eqref{gluecub} by
the gauge transformation of the form 
\begin{equation}
\label{gbdef}
g_B:B\otimes C(\ztwo)\ni b\otimes h\longmapsto b\sw{0}\otimes b\sw{1}h\in B\otimes C(\ztwo).
\end{equation}
To define the diagonal action on the right-hand side,
 we view $B$ as  one of the following $\ztwo$-C*-algebras: 
 $C(\ztwo)\otimes C(I)$ with
the diagonal antipodal action,
$C(I)\otimes C(\ztwo)$ again with the diagonal antipodal action,
or $\tplz$ with the $\ztwo$-action given by $\alpha_{-1}^\tplz(s)=-s$.

For brevity, in what follows we will omit 
 the subscript distinguishing various maps $g$ whenever it is implied by the context.
 To compute the result of conjugation of morphisms in \eqref{gluecub} with maps $g$, first we note that 
$g=g^{-1}$ because the antipode of 
$C(\ztwo)$  is equal to the identity function. 
Next, it is immediate to verify that, due to the right
$C(\ztwo)$-colinearity of $\sigma_i$'s,  we obtain
 \begin{equation}
 g\circ(\sigma_1\otimes\id)\circ g=(\sigma_1\otimes\id)\quad\text{and}\quad g\circ(\sigma_2\otimes\id)\circ 
g=(\sigma_2\otimes\id).
 \end{equation}
Furthermore, let us define $\Phi_{ij}:=g\circ\tilde\Phi_{ij}\circ g$,
and compute:
\begin{align}\label{gluexpl}
\Phi_{01}(h\otimes p\otimes k)&=(g\circ\tilde\Phi_{01}\circ g)(h\otimes p\otimes k)\nonumber\\
&=(g\circ\tilde\Phi_{01})(h\sw 1 \otimes p\sw 0 \otimes  h\sw 2p\sw 1 k)\nonumber\\
&=g\bigl(
\Psi_{01}(h\sw 1 \otimes p\sw 0)T_{01}( h\sw{2}p\sw{1}k\sw{1})\otimes h\sw{3}p\sw{2}k\sw{2}
\bigr)\nonumber\\
&=g\bigl(
(h\sw{1}p\sw{1} \otimes p\sw{0})T_{01}( h\sw{2}p\sw{2}k\sw{1})\otimes h\sw{3}p\sw{3}k\sw{2}
\bigr)\nonumber\\
&=(h\sw{1}p\sw{2}\otimes p\sw{0})T_{01}(h\sw{3}p\sw{4}k\sw{1})\sw{0}\otimes h\sw{2}p\sw{3}p\sw{1}T_{01}(h\sw{3}p\sw{4}k\sw{1})\sw{1}
h\sw{4}p\sw{5}k\sw{2}\nonumber\\
&=(h\sw{1}p\sw{1}\otimes p\sw{0})T_{01}(h\sw{2}p\sw{5}k\sw{1})\sw{0}\otimes h\sw{3}h\sw{4}
p\sw{2}p\sw{3}T_{01}(h\sw{2}p\sw{5}k\sw{1})\sw{1}
p\sw{4}k\sw{2}\nonumber\\
&=(h\sw{1}p\sw{1}\otimes p\sw{0})T_{01}(h\sw{2}p\sw{3}k\sw{1})\sw{0}\otimes 
T_{01}(h\sw{2}p\sw{3}k\sw{1})\sw{1}p\sw{2}k\sw{2}\,.
\end{align}
The penultimate line above follows from the commutativity and cocomutativity of $C(\ztwo)$. The last equality 
is a consequence of
$h\sw{1}h\sw{2}=\varepsilon(h)$ for all $h\in C(\ztwo)$. The computations for $\Phi_{02}$ and $\Phi_{12}$ 
are similar.

Finally, we determine the transition functions $T_{ij}$.
% so that $C(S^2_{\B R\tplz})$ is indeed a noncommutative
% deformation of~$C(S^2)$.
To this end,  we observe that $\Phi_{01}$ is the pullback
of the following map:
\begin{align}
\ztwo\times I\times\ztwo&\stackrel{\Psi_{01}^*}{\longrightarrow}\ztwo\times I\times\ztwo,\\
(a,t,c)&\longmapsto (af_{01}(ac,tc),\,atcf_{01}(ac,ct),\,cf_{01}(ac,tc)).\nonumber
\end{align}
Here $f_{01}:\ztwo\times I\to \ztwo$ is  the map whose pullback
is $T_{01}$. Much in the same way, we note that $\Phi_{02}$ is
the pullback of
\begin{align}
\ztwo\times I\times\ztwo&\stackrel{\Psi_{02}^*}{\longrightarrow}I\times\ztwo\times \ztwo,\\
(a,t,c)&\longmapsto (atcf_{02}(ac,tc),\,af_{02}(ac,tc),\,cf_{02}(ac,tc)),\nonumber
\end{align}
and $\Phi_{12}$ is the pullback of
\begin{align}
I\times\ztwo\times \ztwo&\stackrel{\Psi_{12}^*}{\longrightarrow}I\times\ztwo\times \ztwo,\\
(t,a,c)&\longmapsto (atcf_{12}(ac,tc),\,af_{12}(ac,tc),\,cf_{12}(ac,tc)).\nonumber
\end{align}
The continuity of $f_{ij}$'s implies that they are independent
of their continuous argument. Hence, we have to choose only between four possible functions: $\id$, $-\id$, 
$1$, $-1$.
We choose $f_{ij}=\id$ for all $i,j\in\{0,1,2\}$, $i<j$.
Thus, we obtain
\begin{align}\label{phiijdef}
\Phi_{01}(h\otimes p\otimes k)&=k\otimes p\otimes h,\nonumber\\
\Phi_{02}(h\otimes p\otimes k)&=p\otimes k\otimes h,\nonumber\\
\Phi_{12}( p\otimes h\otimes k)&=p\otimes k\otimes h.
\end{align} 

We are now ready to define the following triple-pullback C*-algebra:
\begin{align}\label{qcube}
{C(S^2_{\B R\tplz})}:=\bigl\{ 
(t_i\otimes u_i)_i\in
(\tplz\otimes C(\ztwo))^3\;|\;(\sigma_1\otimes\id)(t_0\otimes 
u_0)&=\bigl(\Phi_{01}\circ(\sigma_1\otimes\id)\bigl)(t_1\otimes u_1),\nonumber\\ \;(\sigma_2\otimes\id)
(t_0\otimes u_0)&=\bigl(\Phi_{02}\circ(\sigma_1\otimes\id)\bigr)(t_2\otimes u_2),\nonumber\\
\;(\sigma_2\otimes\id\bigr)(t_1\otimes u_1)&=\bigl(\Phi_{12}\circ(\sigma_2\otimes\id))(t_2\otimes u_2)\bigr\}.
\end{align}
Since the diagonal and the rightmost $\ztwo$-actions on the components
of $C(S^2_{\B R\tplz})$ and $\widetilde{C(S^2_{\B R\tplz})}$, respectively, are intertwined by
 C*-isomorphisms, we conclude that they
are isomorphic as $\ztwo$-C*-algebras. Consequently,
 their invariant subalgebras
are naturally isomorphic. Combining this with \eqref{equal}, we
obtain an isomorphism of C*-algebras:
\[\label{cong}
C(S^2_{\B R\tplz})^{\ztwo}\cong C(\proj R2).
\]

One can check that replacing in the foregoing construction of
$C(S^2_{\B R\tplz})$ the Toeplitz algebra $\mathcal{T}$ by
the algebra $C(D)$ of continuous functions on the unit disc,
yields a C*-algebra isomorphic with $C(S^2)$. 
Also, 
the $\ztwo$-action on $C(S^2_{\B R\tplz})$, which is
given by the diagonal action on each component, becomes precisely the
the pullback of the diagonal (antipodal) action on~$S^2$.
The isomorphism is
 given by rounding the boundary of a cube to the unit sphere.
Indeed, using the notation
\[
\mathcal{T}_{i,j}:=(\id\otimes\ev_j)(\mathcal{T}_i\otimes C(\ztwo)),\quad 
E_{1,i}:=(\ev_i\otimes\id)\circ\sigma_1,\quad
E_{2,i}:=(\id\otimes\ev_i)\circ\sigma_2,\label{nota}
\]
allows us to verify it with the help of the following picture:
{\newcommand{\dd}{3.5cm}
\newcommand{\ddh}{1.75cm}
\newcommand{\ead}{3mm}
\newcommand{\ebd}{3.2cm}
\newcommand{\eada}{0.9cm}
\newcommand{\ebda}{2.6cm}
\begin{center}
\begin{tikzpicture}
\draw[fill=green!20!white,draw=black] (0,0)--(\dd,0)--(\dd,\dd)--(0,\dd)--(0,0);
\draw[fill=blue!20!white,draw=black,xshift=-\dd] (0,0)--(\dd,0)--(\dd,\dd)--(0,\dd)--(0,0);
\draw[fill=blue!20!white,draw=black,xshift=\dd] (0,0)--(\dd,0)--(\dd,\dd)--(0,\dd)--(0,0);
\draw[fill=green!20!white,draw=black,xshift=\dd][xshift=\dd] (0,0)--(\dd,0)--(\dd,\dd)--(0,\dd)--(0,0);
\draw[fill=red!20!white,draw=black,yshift=-\dd] (0,0)--(\dd,0)--(\dd,\dd)--(0,\dd)--(0,0);
\draw[fill=red!20!white,draw=black,yshift=\dd] (0,0)--(\dd,0)--(\dd,\dd)--(0,\dd)--(0,0);
\draw (1.75cm,1.75cm) node{\small $\tplz_{0,-1}$};
\draw[xshift=-\dd] (\ddh,\ddh) node{\small $\tplz_{1,-1}$};
\draw[xshift=\dd] (\ddh,\ddh) node{\small $\tplz_{1,1}$};
\draw[xshift=\dd][xshift=\dd] (\ddh,\ddh) node{\small $\tplz_{0,1}$};
\draw[yshift=-\dd] (\ddh,\ddh) node{\small $\tplz_{2,-1}$};
\draw[yshift=\dd] (\ddh,\ddh) node{\small $\tplz_{2,1}$};

\draw[->] (\ead,\eada)--(\ead,\ebda);
\draw[->] (\eada,\ebd)--(\ebda,\ebd);
\draw[->] (\eada,\ead)--(\ebda,\ead);
\draw[->] (\ebd,\eada)--(\ebd,\ebda);

\draw[->,xshift=-\dd] (\ead,\eada)--(\ead,\ebda);
\draw[<-,xshift=-\dd] (\eada,\ebd)--(\ebda,\ebd);
\draw[<-,xshift=-\dd] (\eada,\ead)--(\ebda,\ead);
\draw[->,xshift=-\dd] (\ebd,\eada)--(\ebd,\ebda);

\draw[->,xshift=\dd] (\ead,\eada)--(\ead,\ebda);
\draw[->,xshift=\dd] (\eada,\ebd)--(\ebda,\ebd);
\draw[->,xshift=\dd] (\eada,\ead)--(\ebda,\ead);
\draw[->,xshift=\dd] (\ebd,\eada)--(\ebd,\ebda);

\draw[->,xshift=\dd][xshift=\dd] (\ead,\eada)--(\ead,\ebda);
\draw[<-,xshift=\dd][xshift=\dd] (\eada,\ebd)--(\ebda,\ebd);
\draw[<-,xshift=\dd][xshift=\dd] (\eada,\ead)--(\ebda,\ead);
\draw[->,xshift=\dd][xshift=\dd] (\ebd,\eada)--(\ebd,\ebda);

\draw[->,yshift=\dd] (\ead,\eada)--(\ead,\ebda);
\draw[->,yshift=\dd] (\eada,\ebd)--(\ebda,\ebd);
\draw[->,yshift=\dd] (\eada,\ead)--(\ebda,\ead);
\draw[->,yshift=\dd] (\ebd,\eada)--(\ebd,\ebda);

\draw[<-,yshift=-\dd] (\ead,\eada)--(\ead,\ebda);
\draw[->,yshift=-\dd] (\eada,\ebd)--(\ebda,\ebd);
\draw[->,yshift=-\dd] (\eada,\ead)--(\ebda,\ead);
\draw[<-,yshift=-\dd] (\ebd,\eada)--(\ebd,\ebda);

\draw (\ead,\ddh) node[anchor=west]{\tiny $E_{1,-1}$};
\draw (\ebd,\ddh) node[anchor=east]{\tiny $E_{1,1}$};
\draw (\ddh,\ead) node[anchor=south]{\tiny $E_{2,-1}$};
\draw (\ddh,\ebd) node[anchor=north]{\tiny $E_{2,1}$};

\draw[xshift=-\dd] (\ead,\ddh) node[anchor=west]{\tiny $E_{1,1}$};
\draw[xshift=-\dd] (\ebd,\ddh) node[anchor=east]{\tiny $E_{1,-1}$};
\draw[xshift=-\dd] (\ddh,\ead) node[anchor=south]{\tiny $E_{2,-1}$};
\draw[xshift=-\dd] (\ddh,\ebd) node[anchor=north]{\tiny $E_{2,1}$};

\draw[xshift=\dd] (\ead,\ddh) node[anchor=west]{\tiny $E_{1,-1}$};
\draw[xshift=\dd] (\ebd,\ddh) node[anchor=east]{\tiny $E_{1,1}$};
\draw[xshift=\dd] (\ddh,\ead) node[anchor=south]{\tiny $E_{2,-1}$};
\draw[xshift=\dd] (\ddh,\ebd) node[anchor=north]{\tiny $E_{2,1}$};

\draw[xshift=\dd][xshift=\dd] (\ead,\ddh) node[anchor=west]{\tiny $E_{1,1}$};
\draw[xshift=\dd][xshift=\dd] (\ebd,\ddh) node[anchor=east]{\tiny $E_{1,-1}$};
\draw[xshift=\dd][xshift=\dd] (\ddh,\ead) node[anchor=south]{\tiny $E_{2,-1}$};
\draw[xshift=\dd][xshift=\dd] (\ddh,\ebd) node[anchor=north]{\tiny $E_{2,1}$};

\draw[yshift=\dd] (\ead,\ddh) node[anchor=west]{\tiny $E_{1,-1}$};
\draw[yshift=\dd] (\ebd,\ddh) node[anchor=east]{\tiny $E_{1,1}$};
\draw[yshift=\dd] (\ddh,\ead) node[anchor=south]{\tiny $E_{2,-1}$};
\draw[yshift=\dd] (\ddh,\ebd) node[anchor=north]{\tiny $E_{2,1}$};

\draw[yshift=-\dd] (\ead,\ddh) node[anchor=west]{\tiny $E_{1,-1}$};
\draw[yshift=-\dd] (\ebd,\ddh) node[anchor=east]{\tiny $E_{1,1}$};
\draw[yshift=-\dd] (\ddh,\ead) node[anchor=south]{\tiny $E_{2,1}$};
\draw[yshift=-\dd] (\ddh,\ebd) node[anchor=north]{\tiny $E_{2,-1}$};
\end{tikzpicture}
\end{center}}
\begin{remark}
It is worth mentioning that the choice
$f_{ij}=1$ for all $i,j\in\{0,1,2\}$, $i<j$, would yield
the C*-algebra $\qcp 2\otimes C(\ztwo)$.
\end{remark}

\subsection{The quantum $\ztwo$-principal bundle 
$S^2_{\B R\tplz}\rightarrow\proj R 2$}
\label{2.3}

To prove that the  $C(\ztwo)$-comodule algebra $\qcube$
constructed above as a triple-pullback comodule algebra
is principal, first we need to show that all restrictions
$\qcube\to (\mathcal{T}\otimes C(\ztwo))_i$ of the canonical
surjections remain surjective. 
A sufficient condition for the aforementioned surjectivity is given 
in the following technical proposition.

\begin{proposition}\ \cite[Proposition~9]{CM00} \label{MattProp}
Let us denote, for brevity, $\underline{N}:=\{0,\ldots,N\}$.
 Let $\{B_i\}_{i\in{\underline{N}}}$ and
  $\{B_{ij}\}_{i,j\in{\underline{N}},\,i\neq j}$ be two families of
algebras
  such that
  $B_{ij}=B_{ji}$\,, and let $\{\pi^i_j:B_i\rightarrow B_{ij}\}_{ij}$
be a family of surjective algebra maps whose kernels generate a distributive lattice of ideals. Also,
let $\pi_i:B\rightarrow B_i$, $i\in{\underline{N}}$,
  be the restrictions to
$$
B:=\{(b_i)_i\in\mbox{$\prod_{i\in{\underline{N}}}$}B_i\;|\;\pi^i_j(b_i)=\pi^j_i(b_j),\;\forall\,
i,j\in{\underline{N}},\;i\neq j\}
$$
 of the canonical projections.  Assume that, for all triples
of distinct indices $i,j,k\in{\underline{N}}$, the following
conditions hold:
\begin{enumerate}
%\item for all $I\subseteq\underbar{N}$ and $j,k\in\underbar{N}\setminus I$ %we have
%\begin{equation*}
%\bigcap_{i\in I}(\ker\pi^k_i+\ker\pi^k_j)=\left(\bigcap_{i\in I}\ker\pi^k_i\right)+\ker\pi^k_j\,;
%\end{equation*}
\item $\pi^i_j(\ker\pi^i_k)=\pi^j_i(\ker\pi^j_k)$;
\vspace{1mm}
\item the isomorphisms $\pi^{ij}_k\colon
  B_i/(\ker\pi^i_j+\ker\pi^i_k)\longrightarrow
  B_{ij}/\pi^i_j(\ker\pi^i_k)$ defined as
  \begin{align*}
    &b_i+\ker\pi^i_j+\ker\pi^i_k\longmapsto \pi^i_j(b_i)+\pi^i_j(\ker\pi^i_k)
  \\
  \text{ satisfy }\quad  &(\pi^{ik}_j)^{-1}\circ\pi^{ki}_j=(\pi^{ij}_k)^{-1}\circ\pi^{ji}_k\circ(\pi^{jk}_i)^{-1}\circ\pi^{kj}_i.
  \end{align*}
\end{enumerate}\vspace*{-5mm}
\begin{align*}
\text{Then, }\;&\mbox{\Large$\forall\;$}(b_i)_{i\in I}\in\mbox{$\prod_{i\in I}$}\;
B_i,\;  I\subseteq{\underline{N}},
\text{ such that }
\pi^i_j(b_i)=\pi^j_i(b_j),\, \forall\; i,j\in I,\,i\neq j,
\\
&\mbox{\Large$\exists\;$}
(c_i)_{i\in{\underline{N}}}\in\mbox{$\prod_{i\in{\underline{N}}}$}\; B_i : \:
\pi^i_j(c_i)=\pi^j_i(c_j),\,\forall\; i,j\in{\underline{N}},\,i\neq j, \text{ and }
c_i=b_i, \forall\; i\in I.\phantom{WWWWWWWWWWWW}
\end{align*}
\end{proposition}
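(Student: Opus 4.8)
The plan is to reduce the statement, by induction, to extending a compatible family by a single index, and then to solve that one-index extension by a Chinese-remainder argument tailored to a distributive lattice of ideals.

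\emph{Step 1 (reduction to adding one index).} I would induct on $|\underline N\setminus I|$, the case $I=\underline N$ being vacuous. For the inductive step it is enough to prove: given $k\in\underline N\setminus I$ and a compatible family $(b_i)_{i\in I}$ (i.e.\ $\pi^i_j(b_i)=\pi^j_i(b_j)$ for all $i,j\in I$), there exists $b_k\in B_k$ with $\pi^k_j(b_k)=\pi^j_k(b_j)$ for all $j\in I$. Indeed, then $(b_i)_{i\in I\cup\{k\}}$ is again compatible and the induction proceeds until $I=\underline N$.

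\emph{Step 2 (a distributive Chinese Remainder Lemma).} The workhorse will be the following general fact, proved by induction on a finite index set $S$: if $\{\rho_s\colon A\to A_s\}_{s\in S}$ are surjective algebra maps whose kernels $K_s:=\ker\rho_s$ generate a distributive lattice of ideals, and $(a_s)_{s\in S}$ is such that the images of $a_s$ and $a_t$ agree in $A/(K_s+K_t)$ for all $s,t$, then there is $a\in A$ realizing every $a_s$. The base $|S|\le 2$ is the ordinary two-ideal patching. For the step, one realizes $(a_s)_{s\ne t}$ by some $a'$ via the inductive hypothesis, picks any lift $x$ of $a_t$, uses pairwise compatibility to get $a'-x\in K_s+K_t$ for every $s\ne t$, hence $a'-x\in\bigcap_{s\ne t}(K_s+K_t)$, and then invokes the distributive-lattice identity $\bigcap_{s\ne t}(K_s+K_t)=\bigl(\bigcap_{s\ne t}K_s\bigr)+K_t$ to write $a'-x=z+w$ with $z\in\bigcap_{s\ne t}K_s$ and $w\in K_t$; the element $a:=a'-z$ then works. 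That lattice identity is itself a short induction from $(p\vee r)\wedge(q\vee r)=(p\wedge q)\vee r$, and this is the one place where distributivity is genuinely used.

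\emph{Step 3 (applying it with $A=B_k$).} Apply Step 2 with $\rho_j:=\pi^k_j$ and $a_j:=\pi^j_k(b_j)\in B_{kj}$ for $j\in I$; the sought $b_k$ is exactly a simultaneous solution of $\pi^k_j(b_k)=a_j$. It remains to verify, for distinct $i,j\in I$, that the classes of $\pi^i_k(b_i)$ and $\pi^j_k(b_j)$ coincide in $B_k/(\ker\pi^k_i+\ker\pi^k_j)$. Here enter hypotheses (1) and (2) for the triple $i,j,k$: condition (1) turns each $B_{ab}/\pi^a_b(\ker\pi^a_c)$ into an unambiguous common target, so that the maps of (2) assemble into isomorphisms $\phi_{a\to b}$ between the quotients $B_a/(\ker\pi^a_b+\ker\pi^a_c)$, and condition (2) is precisely the cocycle identity $\phi_{k\to i}=\phi_{j\to i}\circ\phi_{k\to j}$. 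Unwinding definitions, the class of $\pi^i_k(b_i)$ in $B_k/(\ker\pi^k_i+\ker\pi^k_j)$ equals $\phi_{k\to i}^{-1}$ of the class of $b_i$ in $B_i/(\ker\pi^i_j+\ker\pi^i_k)$, and symmetrically for $j$; and the assumed equality $\pi^i_j(b_i)=\pi^j_i(b_j)$ says exactly that the class of $b_i$ equals $\phi_{j\to i}$ of the class of $b_j$. Substituting and cancelling via the cocycle identity shows the two classes agree, so Step 2 produces $b_k$ and Step 1 closes.

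I expect the main obstacle to be bookkeeping rather than ideas: keeping the many quotients $B_a/(\ker\pi^a_b+\ker\pi^a_c)$, the identifications supplied by (1), and the exact directions of the isomorphisms in (2) straight long enough to see the cocycle relation collapse the two expressions in Step 3. The distributive Chinese Remainder Lemma of Step 2, though the conceptual heart, is routine once the identity $\bigcap_s(K_s+L)=(\bigcap_s K_s)+L$ is in hand.
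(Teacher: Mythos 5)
Your proposal is correct, and there is nothing to compare it against inside the paper: Proposition~\ref{MattProp} is quoted verbatim from \cite[Prop.~9]{CM00} and no proof is reproduced here. Your argument --- extending a compatible family one index at a time, solving the one-index extension by the distributive-lattice Chinese remainder lemma $\bigcap_{s}(K_s+K_t)=\bigl(\bigcap_{s}K_s\bigr)+K_t$, and verifying the required pairwise compatibility in $B_k/(\ker\pi^k_i+\ker\pi^k_j)$ from condition (1) (which makes the common targets unambiguous) together with condition (2) read as the cocycle identity $\phi_{k\to i}=\phi_{j\to i}\circ\phi_{k\to j}$ --- is essentially the argument of the cited reference, and all the steps check out.
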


Our task  now is to check that our multipullback construction of
$\qcube$ satisfies the assumptions of Proposition~\ref{MattProp}.
The distributivity condition is automatically satisfied because 
here we work with C*-ideals, and the lattices of C*-ideals
are always distributive. 
We begin by defining certain auxiliary maps 
%that will turn out to be instrumental in constructing  elements in the preimages
%of $\delta^*_i$'s (see \eqref{deltadef*}) and other maps. 
%Such example elements will be necessary for some of the proofs in what follows.
%Let 
$\hat\phi_1,\hat\phi_2\in C(S^1)$ by the formulae:
\begin{equation}
\hat\phi_1(e^{i\theta}):=
\left\{
\begin{array}{ccl}
2-\frac{4}{\pi}\theta & \text{if} &\theta\in[\frac{\pi}{4},\frac{3\pi}{4}]\\
&&\\
-1 & \text{if} & \theta\in[\frac{3\pi}{4},\frac{5\pi}{4}]\\
&&\\
\frac{4}{\pi}\theta-6 &\text{if} & \theta\in[\frac{5\pi}{4},\frac{7\pi}{4}]\\
&&\\
1 &\text{if} & \theta\in[\frac{7\pi}{4},\frac{9\pi}{4}]
\end{array}
\right.,\quad
\hat\phi_2(e^{i\theta}):=
\left\{
\begin{array}{ccl}
1 & \text{if} &\theta\in[\frac{\pi}{4},\frac{3\pi}{4}]\\
&&\\
4-\frac{4}{\pi}\theta & \text{if} & \theta\in[\frac{3\pi}{4},\frac{5\pi}{4}]\\
&&\\
-1 &\text{if} & \theta\in[\frac{5\pi}{4},\frac{7\pi}{4}]\\
&&\\
\frac{4}{\pi}\theta-8 &\text{if} & \theta\in[\frac{7\pi}{4},\frac{9\pi}{4}]
\end{array}
\right..\label{phidefeq}
\end{equation}
One immediately sees that
\[\label{line2}
\hat\phi_1,\hat\phi_2:S^1\longrightarrow [-1,1],\quad
\hat\phi_1(-z)=-\hat\phi_1(z),\quad \hat\phi_2(-z)=-\hat\phi_2(z).
\]
Next, let us denote by $\imath_I\in C(I)$ the inclusion given by
 $\imath_I(t):=t$, where
$t\in I:=[-1,1]$. Recalling that $u$ is the generator of $C(\ztwo)$
given by $u(\pm 1) = \pm 1$, and remembering \eqref{deltadef},
one easily verifies the following properties of $\hat\phi_i$'s:
\[
\hat\phi_1\circ\delta_1=u\otimes 1_{C(I)},\quad
\hat\phi_2\circ\delta_2=1_{C(I)}\otimes u,\quad
\hat\phi_1\circ\delta_2=\imath_I\otimes 1_{C(\ztwo)},\quad
\hat\phi_2\circ\delta_1=1_{C(\ztwo)}\otimes \imath_I.\label{phiprop}
\]

Furthermore, we need to define
unital and right $C(\ztwo)$-colinear splittings 
\begin{equation}
\hat\omega_1:C(\ztwo)\otimes C(I)\longrightarrow C(S^1),\qquad
\hat\omega_2:C(I)\otimes C(\ztwo)\longrightarrow C(S^1),
\end{equation}
of $\delta_1^*$ and $\delta_2^*$, respectively. To this end,
take any $h\in C(I)$ and set
\begin{align}
\label{omegadef}
\hat\omega_1(1\otimes h):=h\circ\hat\phi_2,&\quad
\hat\omega_1(u\otimes h):=\hat\phi_1\cdot (h\circ\hat\phi_2),\nonumber\\
\hat\omega_2(h\otimes 1):=h\circ\hat\phi_1,&\quad
\hat\omega_2(h\otimes u):=\hat\phi_2\cdot (h\circ\hat\phi_1).
\end{align}
Here $\cdot$ stands for the pointwise multiplication in $C(S^1)$.
The right colinearity of $\hat\omega_i$'s follows immediately from  \eqref{line2}, and it is straightforward to 
check that $\hat\omega_i$'s are
splittings, i.e.\ \mbox{$\delta_1^*\circ\hat\omega_1=\id$}, $\delta_2^*\circ\hat\omega_2=\id$. Indeed,
take any $h\in C(I)$ and use \eqref{phiprop} to compute: 
\begin{align}
(\delta_1^*\circ\hat\omega_1)&(1_{C(\ztwo)}\otimes h)=h\circ\hat\phi_2\circ\delta_1=
h\circ(1_{C(\ztwo)}\otimes \imath_I)=(1_{C(\ztwo)}\otimes h),\nonumber\\
(\delta_1^*\circ\hat\omega_1)&(u\otimes h)=(\hat\phi_1\circ\delta_1)\cdot (h\circ\hat\phi_2\circ\delta_1)
=(u\otimes 1_{C(I)})\cdot (1_{C(\ztwo)}\otimes h)=u\otimes h.
\end{align}
The case of $\delta_2^*\circ\hat\omega_2$ is analogous.

To prove certain additional properties of $\omega_1$ and $\omega_2$,
let us denote by $\imath_\ztwo:\ztwo\rightarrow I$ the inclusion map, 
by $\imath_\ztwo^*:C(I)\rightarrow C(\ztwo)$ its pullback,  and 
 by $\imath^\ztwo_*:C(\ztwo)\rightarrow C(I)$ the right 
$C(\ztwo)$-colinear splitting of $\imath^*_\ztwo$ defined by the formula
\begin{equation}
\imath^\ztwo_*(1_{C(\ztwo)})=1_{C(I)},\quad \imath^\ztwo_*(u)=\imath_I.
\end{equation}
Now we are ready to verify that
\begin{equation}
\label{omegaprop}
\delta_2^*\circ\hat\omega_1=\imath^\ztwo_*\otimes\imath^*_\ztwo,\quad
\delta_1^*\circ\hat\omega_2=\imath_\ztwo^*\otimes\imath_*^\ztwo.
\end{equation}
To this end, again take
any $h\in C(I)$ and use \eqref{phiprop} to compute
\begin{align}
(\delta_2^*\circ\hat\omega_1)(1_{C(\ztwo)}\otimes h)
&=h\circ\hat\phi_2\circ\delta_2=h\circ(1_{C(I)}\otimes u)\nonumber\\
&=\imath^\ztwo_*(1_{C(\ztwo)})\otimes\imath^*_\ztwo(h),\nonumber\\
(\delta_2^*\circ\hat\omega_1)(u\otimes h)&=(\hat\phi_1\circ\delta_2)\cdot(h\circ\hat\phi_2\circ\delta_2)=(\imath_I\otimes 1_{C(\ztwo)})\cdot\bigl(1_{C(I)}\otimes\imath^*_\ztwo(h)\bigr)\nonumber\\
&=\imath^\ztwo_*(u)\otimes\imath^*_\ztwo(h).
\end{align}
The proof of the second  equality in \eqref{omegaprop} is similar.

As the last  prerequisite to check that the assumptions of 
Proposition~\ref{MattProp} are satisfied, we note the following property of the kernels of $\delta^*_i$'s:
\begin{equation}
\label{deltakerprop}
\delta^*_1(\ker\delta^*_2)=C(\ztwo)\otimes\ker\imath^*_\ztwo,\quad
\delta^*_2(\ker\delta^*_1)=\ker\imath^*_\ztwo\otimes C(\ztwo).
\end{equation} 
Recalling that $\sigma_i:=\delta_i^*\circ\sigma$, $i=1,2$,
we can combine
 $\ker\sigma_i=\sigma^{-1}(\ker\delta_i^*)$, $i=1,2$, with 
\eqref{deltakerprop} to obtain
\begin{equation}
\label{sigmakerprop}
\sigma_1(\ker\sigma_2)=C(\ztwo)\otimes\ker\imath^*_\ztwo,\quad
\sigma_2(\ker\sigma_1)=\ker\imath^*_\ztwo\otimes C(\ztwo).
\end{equation}

Let us now instantiate Condition~(1)  of Proposition~\ref{MattProp}
for $N=2$:
\begin{equation}
\label{Mattjed}
\pi^0_1(\ker\pi^0_2)=\pi^1_0(\ker\pi^1_2),\quad
\pi^0_2(\ker\pi^0_1)=\pi^2_0(\ker\pi^2_1),\quad
\pi^1_2(\ker\pi^1_0)=\pi^2_1(\ker\pi^2_0),
\end{equation} 
where
\begin{gather}
\pi^0_1:=\sigma_1\otimes\id,\quad \pi^1_0:=\Phi_{01}\circ(\sigma_1\otimes\id),\quad
\pi^0_2:=\sigma_2\otimes\id,\quad \pi^2_0:=\Phi_{02}\circ(\sigma_1\otimes\id),\nonumber\\ 
\pi^1_2:=\sigma_2\otimes\id,\quad \pi^2_1:=\Phi_{12}\circ(\sigma_2\otimes\id),\label{jedi}
\end{gather}
and $\Phi_{ij}$'s are given by~\eqref{phiijdef}. Taking advantage
of \eqref{sigmakerprop}, we check the first equality of~\eqref{Mattjed}:
\begin{align}
\pi^1_0(\ker\pi^1_2)
&=\Phi_{01}\bigl((\sigma_1\otimes\id)(\ker(\sigma_2\otimes\id))\bigr)
\nonumber\\
&=\Phi_{01}\bigl(\sigma_1(\ker\sigma_2)\otimes C(\ztwo)\bigr)
\nonumber\\
&=\Phi_{01}\bigl(C(\ztwo)\otimes\ker\imath^*_\ztwo\otimes C(\ztwo)\bigr)
\nonumber\\
&=C(\ztwo)\otimes\ker\imath^*_\ztwo\otimes C(\ztwo)
\nonumber\\
&=\sigma_1(\ker\sigma_2)\otimes C(\ztwo)
\nonumber\\
&=(\sigma_1\otimes\id)(\ker(\sigma_2\otimes\id))
\nonumber\\
&=\pi^0_1(\ker\pi^0_2).
\end{align}
Observe that the remaining equalities of \eqref{Mattjed}
 can be verified in the same way.

Condition~(2) of Proposition~\ref{MattProp} for $N=2$ gives
us 6 equalities of the form 
$\phi^{ik}_j=\phi^{ij}_k\circ\phi^{jk}_i$, 
where $\phi^{ij}_k:=(\pi^{ij}_k)^{-1}\circ\pi^{ji}_k$.
Since $(\phi^{ij}_k)^{-1}=\phi^{ji}_k$, these 6 equalities
are pairwise equivalent. Thus, 
it suffices to show only one of them. We choose the equality
$\phi^{02}_1=\phi^{01}_2\circ\phi^{12}_0$ and write it as
\begin{equation}
\label{lastmatcond}
\pi^{01}_2\circ(\pi^{02}_1)^{-1}\circ\pi^{20}_1=\pi^{10}_2\circ(\pi^{12}_0)^{-1}\circ\pi^{21}_0.
\end{equation}
Next, denote by 
\begin{equation}
[\cdot]^i_{jk}:B_i\rightarrow B_i/(\ker\pi^i_j+\ker\pi^i_k),\quad
[\cdot]^{ij}_k:B_{ij}\rightarrow B_{ij}/\pi^i_j(\ker\pi^i_k), 
\end{equation}
the natural epimorphisms.
Using the splittings of $\delta^*_i$'s defined by \eqref{omegadef}
and remembering~\eqref{jedi},
for any $h\otimes g\otimes g'\in C(I)\otimes C(\ztwo)\otimes C(\ztwo)$, we determine the formulae:
\begin{align}\label{pidef}
(\pi^{02}_1)^{-1}\left([h\otimes g\otimes g']^{02}_1\right)&=[\sigma^{-1}(\hat\omega_2(h\otimes 
g))\otimes g']^0_{21},\nonumber\\
(\pi^{12}_0)^{-1}\left([h\otimes g\otimes g']^{12}_0\right)&=[\sigma^{-1}(\hat\omega_2(h\otimes 
g))\otimes g']^1_{20}.
\end{align}
Furthermore, taking $\omega$ to be a linear splitting of 
$\sigma\colon\tplz\rightarrow C(S^1)$, using the notation 
\begin{equation}\label{notswe}
\sigma_1(b)= 1_{C(\ztwo)}\otimes b_{10}+u\otimes b_{11},\quad
\sigma_2(b)=b_{20}\otimes 1_{C(\ztwo)}+b_{21}\otimes u,\quad 
b\in\tplz,
\end{equation}
and employing \eqref{jedi}, \eqref{pidef}, \eqref{omegaprop}, for any $b\otimes g\in\tplz\otimes C(\ztwo)$,
 we compute:
\begin{align}
(\pi^{01}_2\circ(\pi^{02}_1)^{-1}&\circ\pi^{20}_1)([b\otimes g]^2_{01})\nonumber\\
&=[\bigl((\sigma_1\otimes\id)\circ((\omega\circ\hat\omega_2)\otimes\id)\circ\Phi_{02}\circ(\sigma_1\otimes\id)\bigr)
(b\otimes g)]^{01}_2\nonumber\\
&=[\bigl(((\delta^*_1\circ\hat\omega_2)\otimes\id)\circ\Phi_{02}\bigr)
(1_{C(\ztwo)}\otimes b_{10} \otimes g+u\otimes b_{11}\otimes g)]^{01}_2\nonumber\\
&=[\bigl((\delta^*_1\circ\hat\omega_2)\otimes\id\bigr)
(b_{10}\otimes g \otimes 1_{C(\ztwo)} +b_{11}\otimes g \otimes u)]^{01}_2\nonumber\\
&=[\imath_\ztwo^*(b_{10})\otimes \imath_*^\ztwo(g) \otimes 1_{C(\ztwo)} 
+\imath_\ztwo^*(b_{11})\otimes \imath_*^\ztwo(g) \otimes u]^{01}_2\,,
\nonumber\\ %\phantom{a}\nonumber\\
(\pi^{10}_2\circ(\pi^{12}_0)^{-1}&\circ\pi^{21}_0)([b\otimes g]^2_{01})\nonumber\\
&=[\bigl(\Phi_{01}\circ(\sigma_1\otimes\id)\circ((\omega\circ\hat\omega_2)\otimes\id)\circ\Phi_{12}\circ(\sigma_2\otimes\id)\bigr)(b\otimes g)]^{01}_2\nonumber\\
&=[\bigl(\Phi_{01}
\circ((\delta^*_1\circ\hat\omega_2)\otimes\id)\circ\Phi_{12}\bigr)\otimes 1_{C(\ztwo)}\otimes g +b_{21}\otimes u\otimes g)]^{01}_2\nonumber\\
&=[\bigl(\Phi_{01}
\circ((\delta^*_1\circ\hat\omega_2)\otimes\id)\bigr)
(b_{20}\otimes g\otimes 1_{C(\ztwo)} +b_{21}\otimes g \otimes u)]^{01}_2\nonumber\\
&=[\Phi_{01}
(\imath_\ztwo^*(b_{20})\otimes \imath_*^\ztwo(g)\otimes 1_{C(\ztwo)} +\imath_\ztwo^*(b_{21})
\otimes \imath_*^\ztwo(g) \otimes u)]^{01}_2\nonumber\\
&=[
1_{C(\ztwo)} \otimes \imath_*^\ztwo(g)\otimes \imath_\ztwo^*(b_{20})
+u\otimes \imath_*^\ztwo(g) \otimes \imath_\ztwo^*(b_{21})]^{01}_2\,.
\end{align}
Hence \eqref{lastmatcond} is satisfied provided that,
\begin{equation}
\imath_\ztwo^*(b_{10}) \otimes 1_{C(\ztwo)} 
+\imath_\ztwo^*(b_{11}) \otimes u
=1_{C(\ztwo)} \otimes \imath_\ztwo^*(b_{20})
+u \otimes \imath_\ztwo^*(b_{21}),\quad \forall\; b\in\tplz.
\end{equation}
Remembering \eqref{notswe} and applying the flip to the above equation, one sees that it is equivalent to
$(\id\otimes\imath_\ztwo^*)\circ\sigma_1=(\imath_\ztwo^*\otimes\id)\circ\sigma_2$. Due to the surjectivity of $\sigma$, the latter is tantamount
to
 $(\id\otimes\imath_\ztwo^*)\circ\delta^*_1=(\imath_\ztwo^*\otimes\id)\circ\delta^*_2$, which can be immediately verified. 

Thus, we have proven
that, by Proposition~\ref{MattProp}, all maps $\qcube\to (\mathcal{T}\otimes C(\ztwo))_i$ are surjective. Furthermore, they are, by construction, 
$\ztwo$-equivariant for the diagonal action on 
$\mathcal{T}\otimes C(\ztwo)$. The $\ztwo$-equivariance is equivalent
to the $C(\ztwo)$-colinearity for the induced coactions. 
Using the gauge conjugation
by \eqref{gbdef}, we see that $\mathcal{T}\otimes C(\ztwo)$
with the induced diagonal $C(\ztwo)$-coaction
is a trivial principal comodule algebra.
Combining all this with the fact that the kernels of the maps
$\qcube\to (\mathcal{T}\otimes C(\ztwo))_i$
intersect to zero, we take advantage of \cite[Theorem~3.3]{HKMZ} 
to conclude:
\begin{proposition}
$\qcube$ is a principal
$C(\ztwo)$-comodule algebra.
\end{proposition}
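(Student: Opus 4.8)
The plan is to assemble the pieces already prepared above and feed them into the gluing theorem of \cite{HKMZ}. Recall that, by \cite[Theorem~3.3]{HKMZ}, a piecewise trivial comodule algebra over a Hopf algebra with bijective antipode is automatically principal; since the antipode of $C(\ztwo)$ is the identity map, it therefore suffices to exhibit $\qcube$ as a piecewise trivial $C(\ztwo)$-comodule algebra in the sense of Definition~\ref{PiecePrinc}, with respect to the covering given by the three canonical restrictions $\qcube\to(\tplz\otimes C(\ztwo))_i$.

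First I would note that these restrictions are right $C(\ztwo)$-colinear: every structure map in the triple-pullback diagram defining $\qcube$ in \eqref{qcube} is $\ztwo$-equivariant for the relevant diagonal actions, and $\ztwo$-equivariance is the same as colinearity for the dual coactions. Next, each component $\tplz\otimes C(\ztwo)$ carrying the diagonal coaction is a trivial (smash-product) principal $C(\ztwo)$-comodule algebra: conjugating by the gauge transformation $g_B$ of \eqref{gbdef} turns the diagonal coaction into the coaction supported on the rightmost tensorand, which is visibly of smash-product type, so a cleaving map is at hand. What remains for piecewise triviality is that the three restrictions form a covering, i.e.\ that their kernels generate a distributive lattice and intersect to zero; distributivity is automatic for $C^*$-ideals, and the zero-intersection, together with the surjectivity of the restrictions, will follow from Proposition~\ref{MattProp}.

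The hard part — and the one actually carried out in the preceding pages — is verifying the hypotheses of Proposition~\ref{MattProp}, namely the overlap-compatibility conditions \eqref{Mattjed} and \eqref{lastmatcond}. I would deduce \eqref{Mattjed} directly from the kernel identities \eqref{sigmakerprop} and the explicit form \eqref{phiijdef} of the $\Phi_{ij}$'s, and \eqref{lastmatcond} by evaluating both sides against the right $C(\ztwo)$-colinear splittings $\hat\omega_i$ of the $\delta_i^*$'s, thereby reducing the required equality to the manifestly true identity $(\id\otimes\imath_\ztwo^*)\circ\delta^*_1=(\imath_\ztwo^*\otimes\id)\circ\delta^*_2$. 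Here the choice $f_{ij}=\id$ for the transition functions is essential, and the $\ztwo$-colinearity of the splittings is what keeps all computations compatible with the coactions. With surjectivity of the restrictions thus guaranteed, together with their colinearity, the smash-product structure of the components, and the vanishing intersection of kernels, \cite[Theorem~3.3]{HKMZ} yields that $\qcube$ is a principal $C(\ztwo)$-comodule algebra.
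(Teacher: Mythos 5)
Your proposal follows essentially the same route as the paper: the bulk of the work is verifying the hypotheses of Proposition~\ref{MattProp} (via \eqref{sigmakerprop}, \eqref{phiijdef} and the colinear splittings $\hat\omega_i$) to obtain surjectivity of the restrictions $\qcube\to(\tplz\otimes C(\ztwo))_i$, after which their colinearity, the gauge-conjugated smash-product (hence principal) structure of each component, and the vanishing intersection of the kernels are fed into \cite[Theorem~3.3]{HKMZ}. The only slip is cosmetic: the zero-intersection of the kernels is immediate from the multipullback definition of $\qcube$ and does not come from Proposition~\ref{MattProp}, which is needed only for surjectivity.
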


\subsection{The tautological line bundle}

The tautological line bundle over $\B RP^2$ can be defined
as the line bundle associated with the $\ztwo$-principal bundle
$S^2\to\B RP^2$ via the antipodal action of $\ztwo$ on $\B C$. 
This antipodal action translates to the coaction given by the
formula $1\mapsto u\otimes 1$, where  $u\in C(\ztwo)$
is defined by $u(\pm 1)=\pm 1$. We can now use this coaction
to associate with the principal $C(\ztwo)$-comodule algebra
$\qcube$ a
finitely generated projective left $C(\proj R2)$-module 
$L:=C(S^2_{\B R\mathcal{T}})\square^{C(\ztwo)}\B C$.
This  is the module
of the noncommutative tautological
line bundle over the quantum projective space $\proj R2$.
Our primary goal is to prove that this bundle is not stably trivial,
i.e.\ that $L$ is not stably free.

In order to determine the $K_0$-class of $L$, we need refer to yet
another isomorphic construction of $C(\proj R2)$. Let us  recall that 
$\B R P^2:=S^2/(\ztwo)$ is homeomorphic with a disc whose boundary circle is divided by the antipodal $\ztwo$-action. In the same
spirit, we will show that $C(\proj R2)$ and $L$ are respectively
isomorphic with
$C(D_\tplz)^+$ and $C(D_\tplz)^-$ of~\eqref{cdt+-}. First, we define
 $C(D_\mathcal{T})$ that will play the role of the C*-algebra
of a disk in the above construction:
\[\label{cdt}
\xymatrix{&&C(D_\mathcal{T})\ar[dll]\ar[d]\ar[drr]&&\\
\mathcal{T}_0\ar[dr]\ar@/_2pc/[ddrr]&&\mathcal{T}_1\ar[dr]\ar[dl]&&\mathcal{T}_2\,,\ar[dl]\ar@/^2pc/[ddll]\\
&C(I)&&C(I)&\\
&&C(I)&&}
\]
\begin{eqnarray}\label{cdtf}C(D_\mathcal{T}):=\{(p_0,p_1,p_2)\in\mathcal{T}^3 | \sigma_1(p_0)(-1,x)&=&\sigma_1(p_1)(-1,x),
\nonumber\\
\sigma_2(p_0)(x,-1)&=&\sigma_1(p_2)(-1,x),
\nonumber\\ 
\sigma_2(p_1)(x,-1)&=&\sigma_2(p_2)(x,-1)\}.
\end{eqnarray}

Throughout this section, we will frequently consider a $\ztwo$-action
on an algebra~$C(\#)$:
\[
\alpha^\#\colon\ztwo\longrightarrow \mathrm{Aut}(C(\#)),
\qquad \alpha^\#_{-1}:=\alpha^\#(-1).
\]
In particular, $\alpha^{\ztwo}_{-1}$ is simply
the pullback of the multiplication by $-1$. With the help of
this notation, we define
\[\label{qcube+-}
C(S^2_{\B R\mathcal{T}})^\pm:=\{(p_i\otimes t_i)_i\in C(S^2_{\B R\mathcal{T}})\;|\;\alpha^{\mathcal{T}}_{-1}(p_i)\otimes 
\alpha^{\ztwo}_{-1}(t_i)=\pm p_i\otimes t_i\text{ for } i=0,1,2\}.
\]
Note that $C(S^2_{\B R\mathcal{T}})^+
=C(S^2_{\B R\mathcal{T}})^{\mathrm{co}\,C(\ztwo)}$ and
$L$ is naturally isomorphic with $C(S^2_{\B R\mathcal{T}})^-$
(by ommitting $\otimes 1$).
Thus $+$ and $-$ stand for the $\ztwo$-invariant and 
$\ztwo$-equivariant part, respectively. 

Next, we shall argue that
$C(S^2_{\B R\mathcal{T}})$ can be identified with the pullback
C*-algebra of the following diagram:
\[\label{diagram}
\xymatrix{&C(S^2_{\B R\mathcal{T}})=C(S^2_{\B R\mathcal{T}})^{+}\oplus C(S^2_{\B R\mathcal{T}})^{-}\ar[dr]^{\pi_2}\ar[dl]_{\pi_1}&\\
C(D_\mathcal{T})\ar[dr]^{\sigma_1^\circ}&&C(D_\mathcal{T})\,.\ar[dl]_{\sigma_2^\circ}\\
&C(S^1)&}
\]
In this diagram, the top maps are defined as 
\[\label{pi+-}
\pi_n\colon C(S^2_{\B R\mathcal{T}})\ni(p_i\otimes t_i)\longmapsto (\alpha_{(-1)^n}^\tplz (p_i)t_i((-1)^{n+1}))\in C(D_\mathcal{T})\text{ for } n=1,2.
\]
To specify the maps  $\sigma_n^\circ$,
first we  identify six continuous functions on intervals that agree on appropriate endpoints with a continuous function on a circle.
One sees that the antipodal action on $S^1$ pullbacks to
\[
\alpha_{-1}^{S^1}\colon C(S^1)\ni(f_1,\dots,f_6)\longmapsto(f_4,f_5,f_6,f_1,f_2,f_3)\in C(S^1).
\]
This map reflects the difference between the way in which the left 
$D_\mathcal{T}$ and the right $D_\mathcal{T}$ are embedded in 
$S^2_{\B R\mathcal{T}}$. Now we can define 
$\sigma^\circ_2:=\alpha_{-1}^{S^1}\circ\sigma^\circ_1$ and 
\begin{align}\label{scirc}
\sigma^\circ_1(p_0,p_1,p_2):=
\bigl(((&\ev_1\otimes\id)\circ\sigma_1)(p_0)\,,\,((\alpha^I_{-1}\otimes\ev_1)\circ\sigma_2)(p_0),\\ 
((&\id\otimes\ev_1)\circ\sigma_2)(p_1)\,,\,((\ev_1\otimes\alpha^I_{-1})\circ\sigma_1)(p_1),\nonumber\\ 
((&\ev_1\otimes\id)\circ\sigma_1)(p_2)\,,\,((\alpha^I_{-1}\otimes\ev_1)\circ\sigma_2)(p_2)\bigr).\nonumber
\end{align}
These definitions ensure the commutativity of the diagram \eqref{diagram}, i.e.\
$\sigma^\circ_1\circ\pi_1=\sigma^\circ_2\circ\pi_2$. Hence, we have
a $*$-homomorphism 
\[
C(S^2_{\B R\mathcal{T}})\ni x\longmapsto (\pi_1(x),\pi_2(x))\in
\text{ the pullback C*-algebra 
of $\sigma^\circ_1$ and $\sigma^\circ_2$}\,.
\]
It is straightforward to verify that the above map is bijective, so
 $C(S^2_{\B R\mathcal{T}})$ is isomorphic with the pullback C*-algebra of the diagram~\eqref{diagram}.

It is easily checked that the compositions $\sigma_n^\circ\circ\pi_n$ are $\ztwo$-equivariant with respect to the antipodal actions on $C(S^2_{\B R
\mathcal{T}})$ and $C(S^1)$. 
 Indeed, on the left part of the following picture 
(see~\eqref{nota} for notation) the
antipodal $\ztwo$-action on $C(S^2_{\B R\mathcal{T}})$ 
restricted to $\sigma^\circ_1(C(D_\mathcal{T}))$ coincides with the above defined 
antipodal $\ztwo$-action on $C(S^1)$ (see the right figure below).
\[
\newcommand{\dd}{3.5cm}
\newcommand{\ddh}{1.75cm}
\newcommand{\ead}{3mm}
\newcommand{\ebd}{3.2cm}
\newcommand{\eada}{0.9cm}
\newcommand{\ebda}{2.6cm}
\begin{tikzpicture}
\draw[fill=green!20!white,draw=black] (0,0)--(\dd,0)--(\dd,\dd)--(0,\dd)--(0,0);
\draw[fill=blue!20!white,draw=black,xshift=-\dd] (0,0)--(\dd,0)--(\dd,\dd)--(0,\dd)--(0,0);
\draw[fill=red!20!white,draw=black,yshift=-\dd] (0,0)--(\dd,0)--(\dd,\dd)--(0,\dd)--(0,0);
\draw (1.75cm,1.75cm) node{\small $\tplz_{0,-1}$};
\draw[xshift=-\dd] (\ddh,\ddh) node{\small $\tplz_{1,-1}$};
\draw[yshift=-\dd] (\ddh,\ddh) node{\small $\tplz_{2,-1}$};

\draw[->,very thick] (\dd,0)--(\dd,\dd);
\draw[->,very thick] (\dd,\dd)--(0,\dd);
\draw[->,very thick] (0,\dd)--(-\dd,\dd);
\draw[->,very thick] (-\dd,\dd)--(-\dd,0);
\draw[->,very thick] (0,-\dd)--(\dd,-\dd);
\draw[->,very thick] (\dd,-\dd)--(\dd,0);

\draw[->] (\ead,\eada)--(\ead,\ebda);

\draw[->] (\eada,\ead)--(\ebda,\ead);

\draw[<-,xshift=-\dd] (\eada,\ead)--(\ebda,\ead);
\draw[->,xshift=-\dd] (\ebd,\eada)--(\ebd,\ebda);

\draw[<-,yshift=-\dd] (\ead,\eada)--(\ead,\ebda);
\draw[->,yshift=-\dd] (\eada,\ebd)--(\ebda,\ebd);

\draw (\ead,\ddh) node[anchor=west]{\tiny $E_{1,-1}$};
\draw (\dd,\dd/2) node[anchor=west]{\tiny $E_{1,1}$};
\draw (\ddh,\ead) node[anchor=south]{\tiny $E_{2,-1}$};
\draw (\dd/2,\dd) node[anchor=south]{\tiny $\alpha_{-1}^I \circ E_{2,1}$};

\draw[xshift=-\dd] (0,\dd/2) node[anchor=east]{\tiny $\alpha_{-1}^I\circ E_{1,1}$};
\draw[xshift=-\dd] (\ebd,\ddh) node[anchor=east]{\tiny $E_{1,-1}$};
\draw[xshift=-\dd] (\ddh,\ead) node[anchor=south]{\tiny $E_{2,-1}$};
\draw[xshift=-\dd] (\dd/2,\dd) node[anchor=south]{\tiny $E_{2,1}$};

\draw[yshift=-\dd] (\ead,\ddh) node[anchor=west]{\tiny $E_{2,-1}$};
\draw[yshift=-\dd] (\dd,\dd/2) node[anchor=west]{\tiny $\alpha_{-1}^I\circ E_{2,1}$};
\draw[yshift=-\dd] (\dd/2,0) node[anchor=north]{\tiny $E_{1,1}$};
\draw[yshift=-\dd] (\ddh,\ebd) node[anchor=north]{\tiny $E_{1,-1}$};
\end{tikzpicture}\quad
\newcommand{\sss}{3*0.87cm}
\begin{tikzpicture}
\draw (\sss,0) node[anchor=west]{$f_1$};
\draw (\sss/2,2.1) node[anchor=south west]{$f_2$};
\draw (-\sss/2,2.1) node[anchor=south east]{$f_3$};
\draw (-\sss,0) node[anchor=east]{$f_4$};
\draw (-\sss/2,-2.1) node[anchor=north east]{$f_5$};
\draw (\sss/2,-2.1) node[anchor=north west]{$f_6$};

\draw[fill=green!20!white,draw=black] (0,0)--(\sss,-3/2)--(\sss,3/2)--(\sss,3/2)--(0,3)--(0,0);
\draw[fill=blue!20!white,draw=black]
(0,0)--(-\sss,-3/2)--(-\sss,3/2)--(-\sss,3/2)--(0,3)--(0,0);
\draw[fill=red!20!white,draw=black]
(0,0)--(-\sss,-3/2)--(0,-3)--(\sss,-3/2)--(0,0);

\draw[->,very thick] (\sss,-3/2)--(\sss,3/2);
\draw[->,very thick] (\sss,3/2)--(0,3);
\draw[->,very thick] (0,3)--(-\sss,3/2);
\draw[->,very thick] (-\sss,3/2)--(-\sss,-3/2);
\draw[->,very thick] (-\sss,-3/2)--(0,-3);
\draw[->,very thick] (0,-3)--(\sss,-3/2);

\end{tikzpicture}
\]

Since $L\cong C(S^2_{\B R\mathcal{T}})^-$, our next step is
to transform $C(S^2_{\B R\mathcal{T}})^-$ to a more managable form. 
To this end, using \eqref{scirc} and the line above it, we define
\[\label{cdt+-}
C(D_\mathcal{T})^\pm:=\{(p_0,p_1,p_2)\in C(D_\mathcal{T})\;|\;
\sigma^\circ_2(p_1,p_2,p_3)=\pm \sigma^\circ_1(p_1,p_2,p_3)\}.
\]
Next, we
note  that it follows from the $\ztwo$-equivariance of 
$\sigma_n^\circ\circ\pi_n$ that $\pi_n^\pm(C(S^2_{\B R\mathcal{T}})^\pm)\subseteq 
C(D_\mathcal{T})^\pm$, so 
the restrictions of the 
$*$-ho\-mo\-mor\-phisms \eqref{pi+-} define
\[\label{pi+-n}
\pi_n^\pm:C(S^2_{\B R\mathcal{T}})^\pm\longrightarrow C(D_\mathcal{T})^\pm, n\in\{1,2\}. 
\]
\begin{lemma}\label{isolema} 
Let $n\in\{1,2\}$.
The restrictions  
$\pi_n^+$ are isomorphisms of C*-algebras, and $\pi_n^-$
are isomorphisms of modules over $C(S^2_{\B R\mathcal{T}})^+$.
\end{lemma}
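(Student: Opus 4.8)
The plan is to leverage the pullback presentation of $C(S^2_{\B R\mathcal{T}})$ established just above the lemma: the assignment $x\mapsto(\pi_1(x),\pi_2(x))$ is an isomorphism of $C(S^2_{\B R\mathcal{T}})$ onto the fibre product $\{(a,b)\in C(D_\mathcal{T})\times C(D_\mathcal{T})\mid\sigma^\circ_1(a)=\sigma^\circ_2(b)\}$; in particular $(\pi_1,\pi_2)$ is injective, and every pair $(a,b)$ with $\sigma^\circ_1(a)=\sigma^\circ_2(b)$ arises this way. The single computational ingredient I need is how $\pi_1,\pi_2$ interact with the antipodal (diagonal) $\ztwo$-action $\alpha_{-1}$ on $C(S^2_{\B R\mathcal{T}})$: substituting $\alpha_{-1}((p_i\otimes t_i)_i)=(\alpha^{\mathcal{T}}_{-1}(p_i)\otimes\alpha^{\ztwo}_{-1}(t_i))_i$ into~\eqref{pi+-} and using $\alpha^{\mathcal{T}}_{-1}\circ\alpha^{\mathcal{T}}_{-1}=\id$ together with $(\alpha^{\ztwo}_{-1}t)(\pm1)=t(\mp1)$ gives $\pi_1\circ\alpha_{-1}=\pi_2$, whence also $\pi_2\circ\alpha_{-1}=\pi_1$ since $\alpha_{-1}^2=\id$.

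Next I would show that $\pi^\pm_1\colon C(S^2_{\B R\mathcal{T}})^\pm\to C(D_\mathcal{T})^\pm$ is a bijection. Well-definedness: for $x\in C(S^2_{\B R\mathcal{T}})^\pm$ one has $\pi_2(x)=\pi_1(\alpha_{-1}(x))=\pm\pi_1(x)$, so the fibre-product relation $\sigma^\circ_1(\pi_1(x))=\sigma^\circ_2(\pi_2(x))$ becomes $\sigma^\circ_1(\pi_1(x))=\pm\sigma^\circ_2(\pi_1(x))$, i.e.\ $\pi_1(x)\in C(D_\mathcal{T})^\pm$. Injectivity: if moreover $\pi_1(x)=0$, then $\pi_2(x)=\pm\pi_1(x)=0$, so $x=0$ by injectivity of $(\pi_1,\pi_2)$. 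Surjectivity: given $p\in C(D_\mathcal{T})^\pm$, i.e.\ $\sigma^\circ_2(p)=\pm\sigma^\circ_1(p)$, the pair $(p,\pm p)$ lies in the fibre product since $\sigma^\circ_2(\pm p)=\pm\sigma^\circ_2(p)=\sigma^\circ_1(p)$, hence $(p,\pm p)=(\pi_1(x),\pi_2(x))$ for a unique $x\in C(S^2_{\B R\mathcal{T}})$; applying the key identities, $(\pi_1(\alpha_{-1}(x)),\pi_2(\alpha_{-1}(x)))=(\pi_2(x),\pi_1(x))=(\pm\pi_1(x),\pm\pi_2(x))$, so $\alpha_{-1}(x)=\pm x$ by injectivity, i.e.\ $x\in C(S^2_{\B R\mathcal{T}})^\pm$ and $\pi^\pm_1(x)=p$. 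The map $\pi^\pm_2$ is handled the same way, or simply by noting that $\pi_2=\pi_1\circ\alpha_{-1}$ restricts to $\pm\pi^\pm_1$ on $C(S^2_{\B R\mathcal{T}})^\pm$.

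Finally I would upgrade these bijections. Since $\pi_1,\pi_2$ are $*$-homomorphisms, $C(D_\mathcal{T})^+=\{p\mid\sigma^\circ_1(p)=\sigma^\circ_2(p)\}$ is the equalizer of two $*$-homomorphisms, hence a $C^*$-subalgebra, and $\pi^+_n$ is a bijective $*$-homomorphism, i.e.\ a $C^*$-algebra isomorphism. For the minus part, $C(D_\mathcal{T})^-$ is a $C(D_\mathcal{T})^+$-module (if $\sigma^\circ_1(a)=\sigma^\circ_2(a)$ and $\sigma^\circ_2(p)=-\sigma^\circ_1(p)$ then $\sigma^\circ_2(ap)=-\sigma^\circ_1(ap)$), which we regard as a $C(S^2_{\B R\mathcal{T}})^+$-module through the isomorphism $\pi^+_n$ (there is no ambiguity, since $\pi^+_1=\pi^+_2$); then $\pi^-_n(ax)=\pi_n(a)\pi_n(x)=\pi^+_n(a)\,\pi^-_n(x)$ for $a\in C(S^2_{\B R\mathcal{T}})^+$ and $x\in C(S^2_{\B R\mathcal{T}})^-$, so $\pi^-_n$ is a $C(S^2_{\B R\mathcal{T}})^+$-linear bijection, i.e.\ an isomorphism of modules over $C(S^2_{\B R\mathcal{T}})^+$. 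The only genuinely non-formal step is the identity $\pi_1\circ\alpha_{-1}=\pi_2$; the only subtlety beyond that is that the preimage of $(p,\pm p)$ automatically lands in the $\pm$-eigenspace of $\alpha_{-1}$, which is precisely where injectivity of $(\pi_1,\pi_2)$ enters.
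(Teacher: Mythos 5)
Your proposal is correct, but it proves the lemma by a different route than the paper. The paper proceeds constructively: it writes down an explicit candidate inverse $(\pi_1^\pm)^{-1}(p_0,p_1,p_2)=(\mathbf{p}_i)_i$ with $\mathbf{p}_i=\alpha^{\mathcal{T}}_{-1}(p_i)\otimes\mathbf{1}_1\pm p_i\otimes\mathbf{1}_{-1}$, verifies by hand that this lands in $\qcube$ (checking the gluing relations \eqref{qcube} via the consequences \eqref{podsu} of membership in $C(D_\mathcal{T})^\pm$), checks it lands in the correct $\pm$-eigenspace, and then computes both compositions to be the identity, ending with the same remark you make about $\pi_1^-(av)=\pi_1^+(a)\pi_1^-(v)$. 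You instead treat everything formally: you take as input the pullback presentation of $C(S^2_{\B R\mathcal{T}})$ via $(\pi_1,\pi_2)$ stated just above the lemma, add the intertwining identity $\pi_1\circ\alpha_{-1}=\pi_2$ (a correct one-line computation from \eqref{pi+-}), and deduce well-definedness, injectivity and surjectivity of $\pi_n^\pm$ abstractly, with the eigenspace membership of the preimage of $(p,\pm p)$ extracted from injectivity of $(\pi_1,\pi_2)$. This buys a shorter and more transparent argument, and it makes clear that the only genuinely new ingredient beyond the pullback picture is the equivariance relation between $\pi_1$ and $\pi_2$. The trade-off is that the bijectivity of $x\mapsto(\pi_1(x),\pi_2(x))$ onto the fibre product of $\sigma_1^\circ$ and $\sigma_2^\circ$ is only asserted in the paper (``straightforward to verify''), and its surjectivity is essentially the same computation the paper performs inside its proof: the preimage of a compatible pair $(p,\pm p)$ is exactly the paper's $\mathbf{p}_i$ above, and checking it satisfies \eqref{qcube} is the nontrivial step. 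So your argument is valid as a derivation from facts already stated in the text, but the computational content has been relocated into that citation rather than eliminated.
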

\begin{proof}
We consider only the case $n=1$ as the case $n=2$ is analogous. 
Let us define
\begin{gather}
(\pi_1^\pm)^{-1}:C(D_\mathcal{T})^\pm\ni(p_0,p_1,p_2)\longmapsto(\textbf{p}_0,\textbf{p}_1,\textbf{p}_2)\in C(S^2_{\B R\mathcal{T}})^\pm,
\nonumber\\
\textbf{p}_i:=\alpha^{\mathcal{T}}_{-1}(p_i)\otimes \textbf{1}_1\pm p_i\otimes\textbf{1}_{-1},
\end{gather}
where $\textbf{1}_x$ is the function taking $1$ at $x$ and $0$ everywhere else. To show that the ranges of $(\pi_1^\pm)^{-1}$ are
indeed $C(S^2_{\B R\mathcal{T}})^\pm$, respectively,
first we need to check that 
$(\pi_1^\pm)^{-1}(C(D_\mathcal{T})^\pm)\subseteq \qcube$.
To verify this inclusion, we have to check that the defining equalities
\eqref{qcube} hold. 
We will do this only for the first equality 
\[\label{cel}
(\sigma_1\otimes\id)(\alpha^{\mathcal{T}}_{-1}(p_0)\otimes \textbf{1}_1
\pm p_0\otimes\textbf{1}_{-1})=\bigl(\Phi_{01}\circ(\sigma_1\otimes\id)\bigl)(\alpha^{\mathcal{T}}_{-1}(p_1)\otimes \textbf{1}_1
\pm p_1\otimes\textbf{1}_{-1})
\]
as the remaing ones are similar. If $(p_0,p_1,p_2)\in C(D_\mathcal{T})^\pm$, it follows
 from \eqref{cdtf} and \eqref{cdt+-} that
\begin{align}\label{podsu}
\bigl((\ev_{-1}\otimes\id)\circ\sigma_1\bigr)(p_0)&=\bigl((\ev_{-1}\otimes\id)\circ\sigma_1\bigr)(p_1),
\\
\bigl((\ev_1\otimes\id)\circ\sigma_1\bigr)(p_0)&=\pm\bigl((\ev_1\otimes\alpha_{-1}^I)\circ\sigma_1\bigr)(p_1).
\nonumber\end{align} 
Next, let us  introduce the following Heynemann-Sweedler-type notation
with the summation sign suppressed:
\[
\sigma_1(p)=\sigma_1(p)^{(1)}\otimes\sigma_1(p)^{(0)},\quad
\sigma_2(p)=\sigma_2(p)^{(0)}\otimes\sigma_1(p)^{(1)}.
\]
Now, remembering the $\ztwo$-equivariance of $\sigma_1$ and 
$\sigma_2$, we transform \eqref{cel}  into the following 
equivalent form:
\begin{align}
&\alpha_{-1}^\ztwo(\sigma_1(p_0)^{(1)})\otimes\alpha_{-1}^I(\sigma_1(p_0)^{(0)})\otimes\textbf{1}_1\;\pm\;
\sigma_1(p_0)^{(1)}\otimes\sigma_1(p_0)^{(0)}\otimes\textbf{1}_{-1}\\
&=\textbf{1}_1\otimes\alpha_{-1}^I(\sigma_1(p_1)^{(0)})\otimes\alpha_{-1}^\ztwo(\sigma_1(p_1)^{(1)})\;\pm\;
\textbf{1}_{-1}\otimes\sigma_1(p_1)^{(0)}\otimes\sigma_1(p_1)^{(1)}.\nonumber
\end{align}
One can directly check that this formula holds  by evaluating 
the outside legs on the elements of $\ztwo\times\ztwo$  and 
 using~\eqref{podsu}. Finally, the fact that 
$C(D_\tplz)^\pm$ are mapped, respectively, to $\qcube^\pm$ 
follows immediately
from the definition of $\qcube^\pm$ (see~\eqref{qcube}).

We are ready now to verify 
 that both compositions $\pi^\pm_1\circ(\pi_1^{\pm})^{-1}$ and $(\pi_1^{\pm})^{-1}\circ\pi_1^\pm$ are equal to identity. 
First, for each component of $C(D_\tplz)^\pm$
we check that
\[
\pi_1^\pm\bigl(\alpha^{\mathcal{T}}_{-1}(p)\otimes \textbf{1}_1\pm p\otimes\textbf{1}_{-1}\bigr)= \alpha_{-1}^\tplz(\alpha_{-1}^\tplz (p))\;\textbf{1}_1(1)=p\,.
\]
Hence, $\pi^\pm_1\circ(\pi_1^{\pm})^{-1}=\id$. To see the other identity,
we compute:
\begin{align}
(\pi_1^\pm)^{-1}\bigl(\alpha_{-1}^\tplz (p)t(1)\bigr)
&=\alpha^{\mathcal{T}}_{-1}(\alpha_{-1}^\tplz (p))t(1)\otimes \textbf{1}_1\pm \alpha_{-1}^\tplz (p)t(1)\otimes\textbf{1}_{-1}
\nonumber\\
&=p\,t(1)\otimes \textbf{1}_1\pm p\, (\pm\alpha_{-1}^\ztwo (t))(1)\otimes\textbf{1}_{-1}
\nonumber\\
&=p\otimes \bigl( t(1)\textbf{1}_{1}+ t(-1)\textbf{1}_{-1}\bigr)
\nonumber\\
&=p\otimes t\,.
\end{align}
Here to pass from the first to the second line we used the fact 
that
\[
\alpha^{\mathcal{T}}_{-1}(p)\otimes 
\alpha^{\ztwo}_{-1}(t)=\pm p\otimes t \quad\Longrightarrow\quad
\alpha^{\mathcal{T}}_{-1}(p)\otimes t=\pm p\otimes 
\alpha^{\ztwo}_{-1}(t).
\]

To end with, observe that $\pi_1^-$ is an isomorphisms of modules in the sense that $\pi_1^-(av)=\pi_1^+(a)\pi_1^-(v)$.
\end{proof}
To prove that $L\cong C(D_\mathcal{T})^-$ is not stably free, we will
proceed along the lines of~\cite{bh}, where it was crucial to use
the fact that $K_1(\tplz)=0$. Here it is $D_\tplz$ that plays the
role of $\tplz$.
\begin{lemma}$K_0(C(D_{\mathcal{T}}))\cong\B Z,\qquad K_1(C(D_{\mathcal{T}}))\cong 0.$\label{kdysk}
\end{lemma}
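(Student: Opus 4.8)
The plan is to compute $K_*\bigl(C(D_{\mathcal{T}})\bigr)$ by realising $C(D_{\mathcal{T}})$ as an iterated pullback $C^*$-algebra and applying the Mayer--Vietoris six-term exact sequence twice. The ingredients are the standard facts $K_0(\tplz)\cong\B Z$, generated by $[1]$, and $K_1(\tplz)=0$, together with the contractibility of $C(I)=C([-1,1])$, which gives $K_0(C(I))\cong\B Z$, generated by $[1]$, and $K_1(C(I))=0$. Every structure map $\tplz\to C(I)$ occurring in~\eqref{cdtf} is of the form $(\ev_{-1}\otimes\id)\circ\sigma_1$ or $(\id\otimes\ev_{-1})\circ\sigma_2$; since $\sigma\colon\tplz\to C(S^1)$ is surjective and each $\delta_i$ of~\eqref{deltadef} is injective, all these maps are surjective and carry $[1]$ to $[1]$, hence induce the identity $\B Z\to\B Z$ on $K_0$.

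First I would glue the first two Toeplitz blocks: form $A:=\tplz_0\times_{C(I)}\tplz_1$, the pullback along the two maps $p\mapsto\sigma_1(p)(-1,\cdot)$ appearing in the first relation of~\eqref{cdtf}. Both maps are surjective, so Mayer--Vietoris applies; as $K_1$ vanishes for $\tplz_0,\tplz_1,C(I)$ and the two maps induced on $K_0$ are $\mathrm{id}_{\B Z}$, the sequence collapses to $0\to K_0(A)\to\B Z^2\xrightarrow{(m,n)\mapsto m-n}\B Z\to K_1(A)\to 0$. Hence $K_0(A)\cong\B Z$, generated by $[1]$, and $K_1(A)=0$.

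Next I would attach the third block by writing $C(D_{\mathcal{T}})=A\times_{B'}\tplz_2$, where $B'$ is the common image, inside $C(I)\oplus C(I)$, of the maps $\phi_1\colon A\ni(p_0,p_1)\mapsto\bigl(\sigma_2(p_0)(\cdot,-1),\sigma_2(p_1)(\cdot,-1)\bigr)$ and $\phi_2\colon\tplz_2\ni p_2\mapsto\bigl(\sigma_1(p_2)(-1,\cdot),\sigma_2(p_2)(\cdot,-1)\bigr)$, which encode the second and third relations of~\eqref{cdtf}. The decisive observation is that the two arcs of the symbol circle entering here --- those parametrised by $\delta_1(-1,\cdot)$ and $\delta_2(\cdot,-1)$ --- share the single point $\delta_1(-1,-1)=\delta_2(-1,-1)$, so $\phi_1$ and $\phi_2$ both land in, and (using surjectivity of $\sigma$, and the first relation of~\eqref{cdtf} in the case of $\phi_1$) surject onto, $B'=\{(f,g)\in C(I)\oplus C(I)\mid f(-1)=g(-1)\}$. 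This $B'$ is the algebra of continuous functions on two intervals joined at an endpoint, hence contractible, so $K_0(B')\cong\B Z$, generated by $[1]$, $K_1(B')=0$, and both structure maps again induce $\mathrm{id}_{\B Z}$ on $K_0$. A second Mayer--Vietoris sequence then gives $0\to K_0(C(D_{\mathcal{T}}))\to\B Z^2\xrightarrow{(m,n)\mapsto m-n}\B Z\to K_1(C(D_{\mathcal{T}}))\to 0$, whence $K_0(C(D_{\mathcal{T}}))\cong\B Z$ and $K_1(C(D_{\mathcal{T}}))\cong 0$, as claimed.

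The genuinely delicate point is the identification of the intermediate algebra $B'$ in the second pullback: one must see that the two ``free'' edges along which $\tplz_2$ is attached meet at a corner, so that $B'\cong C(I)$ and not $C(I)\oplus C(I)$. Were those edges disjoint, the identical bookkeeping would output $K_1\cong\B Z$ --- which is exactly what happens for three squares glued in a cyclic chain, yielding a cylinder rather than a disc. The attendant surjectivity checks for $\phi_1$ and $\phi_2$ are routine, being of the same kind as the verifications already carried out through Proposition~\ref{MattProp}.
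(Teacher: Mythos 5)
Your argument is correct and follows essentially the same route as the paper: the paper also converts the triple pullback \eqref{cdt} into an iterated pullback (via \cite{BHR}), whose intermediate algebras $P_1$ and $P_{12}$ are exactly your $A$ and $B'$ (two intervals joined at the corner $\delta_1(-1,-1)=\delta_2(-1,-1)$), and then applies the Mayer--Vietoris six-term sequence twice with the same inputs $K_0(\tplz)\cong\B Z$, $K_1(\tplz)=0$. The only cosmetic difference is that the paper obtains $K_*(P_{12})$ by a further Mayer--Vietoris over $\B C$ and delegates the surjectivity checks to Section~\ref{2.3} and \cite{BHR}, whereas you identify $B'$ and verify surjectivity of the structure maps directly.
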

\begin{proof}
In Section~\ref{2.3}, we have proven that all
maps $\qcube\to (\mathcal{T}\otimes C(\ztwo))_i$ are surjective.
Combining this with \eqref{pi+-}, one can easily conclude
that all restrictions to $C(D_\tplz)$ of the canonical surjections
are also surjective. Therefore, we can
use \cite[Lemma~0.2]{r-j12} to
 convert the defining  triple-pullback diagram \eqref{cdt}
to the iterated pullback diagram:
\[\label{iteratedp}
\xymatrix{
&&&C(D_{\mathcal{T}})\ar[dll]\ar[drr]&&\\
&P_1\ar[dr]\ar[drrr]\ar[dl]&&&&\mathcal T_2\ar[dl]\\
\mathcal T_0\ar[dr]&&\mathcal T_1\ar[dl]&& P_{12}\ar[dl]\ar[dr]&\\
&C(I)&&C(I)\ar[dr]&&C(I).\ar[dl]\\
&&&&\B C&}
\]
Here $I$ is identified with an arc of $S^1$ as previously done
(see~\eqref{pbelow}). Next, applying the Mayer--Vietoris six-term
exact sequence to the bottom pullback sub-diagrams of the above diagram,
we obtain
\[
\xymatrix{K_0(P_1)\ar[r]&\B Z\oplus \B Z\ar[r]&\B Z\ar[d]\\
0\ar[u]&0\ar[l]&K_1(P_1),\ar[l]}\qquad
\xymatrix{K_0(P_{12})\ar[r]&\B Z\oplus \B Z\ar[r]&\B Z\ar[d]\\
0\ar[u]&0\ar[l]&K_1(P_{12}).\ar[l]}
\]
Since $K_0(\tplz)\cong\B Z\cong K_0(C(I))$ are generated by the classes 
of  respective $1$'s in the algebras, both arrows 
$\B Z\oplus\B Z\to\B Z$ are given by the formula 
$(a,b)\mapsto a-b$. Hence, we obtain
\begin{align*}
K_0(P_1)=\B Z&,\quad K_0(P_{12})=\B Z,\\
 K_1(P_1)=0&,\quad K_1(P_{12})=0.
\end{align*}
This in turn yields the following form of the
 Mayer--Vietoris six-term
exact sequence of the top pullback sub-diagram of \eqref{iteratedp}: 
\[
\xymatrix{K_0(C(D_{\mathcal{T}}))\ar[r]&\B Z\oplus \B Z\ar[r]&\B Z\ar[d]\\
0\ar[u]&0\ar[l]&K_1(C(D_{\mathcal{T}})).\ar[l]}
\]
Finally, as the arrow $\B Z\oplus\B Z\to\B Z$ is again (and
for much the same reasons) given
by the formula $(a,b)\mapsto a-b$, we conclude the claim of the lemma.
\end{proof}

\begin{theorem}\label{nst}
 Let $L:=C(S^2_{\B R\mathcal{T}})\square^{C(\ztwo)}\B C$ be the associated left $C(\B R P_\mathcal{T}^2)$-module for the coaction $\varrho\colon\B C\to C(\ztwo)\otimes \B C$, $\varrho(1)=u\otimes 1$, $u(\pm 1)=\pm 1$. Then $L$ is \emph{not stably free}. In other words, the tautological line bundle over $\B R P_\mathcal{T}^2$ is not stably trivial. 
\end{theorem}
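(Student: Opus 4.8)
The plan is to compute $K_0(\qcp 2)$ by a Mayer--Vietoris argument and then to locate the class $[L]$ in it, following the method of \cite{bh}; here $C(D_{\mathcal T})$, whose $K$-theory was determined in Lemma~\ref{kdysk}, takes over the role that $\mathcal T$ (for which $K_1(\mathcal T)=0$) plays there. First I would record a pullback presentation of the base. By \eqref{cong} and Lemma~\ref{isolema} we may identify $\qcp 2$ with $C(D_{\mathcal T})^+$ and $L$ with $C(D_{\mathcal T})^-$ as a module over it. Since $\sigma_2^\circ=\alpha_{-1}^{S^1}\circ\sigma_1^\circ$, the spaces $C(D_{\mathcal T})^\pm$ of \eqref{cdt+-} are exactly $(\sigma_1^\circ)^{-1}(C(S^1)^\pm)$, where $C(S^1)^{\pm}$ are the $(\pm1)$-eigenspaces of the antipodal action on $C(S^1)$; here $C(S^1)^+$ is the fixed-point subalgebra $C(S^1)^{\ztwo}\cong C(S^1)$ (via the double cover $z\mapsto z^2$), and $C(S^1)^-=z\,C(S^1)^{\ztwo}$ is free of rank one over $C(S^1)^{\ztwo}$ on the generator $z$. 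Thus $\qcp 2=C(D_{\mathcal T})^+$ is the pullback $C^*$-algebra of $C(D_{\mathcal T})\xrightarrow{\sigma_1^\circ}C(S^1)\hookleftarrow C(S^1)^{\ztwo}$, with $\sigma_1^\circ$ surjective (as noted in the proof of Lemma~\ref{kdysk}), and $L=C(D_{\mathcal T})^-$ is a finitely generated projective module over it.

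Next I would run the Mayer--Vietoris six-term exact sequence for this pullback, inserting $K_0(C(D_{\mathcal T}))\cong\B Z$ (generated by $[1]$) and $K_1(C(D_{\mathcal T}))=0$ from Lemma~\ref{kdysk}, together with $K_*(C(S^1))=K_*(C(S^1)^{\ztwo})=(\B Z,\B Z)$. The inclusion $C(S^1)^{\ztwo}\hookrightarrow C(S^1)$ is pullback along $z\mapsto z^2$, hence multiplication by $2$ on $K_1$, while $(\sigma_1^\circ)_*$ annihilates $K_1(C(D_{\mathcal T}))=0$; therefore the map $K_1(C(D_{\mathcal T}))\oplus K_1(C(S^1)^{\ztwo})\to K_1(C(S^1))$ has image $2\B Z$, the connecting map $\partial\colon K_1(C(S^1))=\B Z\to K_0(\qcp 2)$ has kernel $2\B Z$ and image $\cong\ztwo$, and the rest of the sequence gives $K_0(\qcp 2)\cong\B Z\oplus\ztwo$. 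The vanishing $K_1(C(D_{\mathcal T}))=0$ is exactly what keeps the preceding map from being onto, and so is exactly what makes $\partial\neq0$ --- this is the analogue of the use of $K_1(\mathcal T)=0$ in \cite{bh}.

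I would then locate $[L]$. Set $M_1:=L\otimes_{C(D_{\mathcal T})^+}C(D_{\mathcal T})$. Restricting $L\cong C(D_{\mathcal T})^-$ along $\sigma_1^\circ$ yields $C(D_{\mathcal T})^-/\ker\sigma_1^\circ\cong C(S^1)^-$, free of rank one over $C(S^1)^{\ztwo}$; consequently $M_1\otimes_{C(D_{\mathcal T})}C(S^1)=L\otimes_{C(D_{\mathcal T})^+}C(S^1)\cong C(S^1)$ is free of rank one, and since $(\sigma_1^\circ)_*\colon K_0(C(D_{\mathcal T}))\xrightarrow{\sim}K_0(C(S^1))$ is an isomorphism carrying $[1]$ to $[1]$, this forces $[M_1]=[1]$ in $K_0(C(D_{\mathcal T}))$. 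As also $[L]-[1]$ has zero image in $K_0(C(S^1)^{\ztwo})$, by exactness $[L]-[1]\in\operatorname{im}\partial\cong\ztwo$. To see this class is nonzero I would exhibit the clutching unitary of $L$ over the pullback. The multiplication map $M_1\to C(D_{\mathcal T})$ is surjective: a lift $\hat z\in C(D_{\mathcal T})^-$ of the generator $z$ of $C(S^1)^-$ satisfies $\sigma_1^\circ(\hat z\hat z^{*})=1$, so $1=\hat z\hat z^{*}+(1-\hat z\hat z^{*})$ lies in its image (note $1-\hat z\hat z^{*}\in\ker\sigma_1^\circ\subseteq C(D_{\mathcal T})^-$). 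Hence $M_1\cong C(D_{\mathcal T})\oplus N$ with $[N]=[M_1]-[1]=0$; passing to $C(S^1)$, over which every finitely generated projective module is free, kills $N$ (its class in $K_0(C(S^1))\cong\B Z$ is $0$), so $M_1|_{C(S^1)}\cong C(S^1)$ with a basis coming from the $C(D_{\mathcal T})$-summand. Under the canonical identification $L\otimes_{C(D_{\mathcal T})^+}C(S^1)=C(S^1)^-\otimes_{C(S^1)^{\ztwo}}C(S^1)\cong C(S^1)$ this basis is the constant $1$ (it is $\sigma_1^\circ$ of $1\in C(D_{\mathcal T})$), whereas the trivialisation of $L$ coming from $C(S^1)^{\ztwo}$ is the generator $z$ of $C(S^1)^-$. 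The clutching unitary is therefore $z^{\pm1}$, which is not in $\ker\partial=2\B Z$, so $[L]-[1]=\pm\partial([z])$ is the nonzero element of $\operatorname{im}\partial\cong\ztwo$.

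Finally: if $L$ were stably free then $[L]=n[1]$ for some $n$, so $[L]-[1]=(n-1)[1]$; but $[L]-[1]$ is $2$-torsion while $[1]$ has infinite order (its image in $K_0(C(S^1)^{\ztwo})\cong\B Z$ is $1$), which forces $n=1$ and $[L]=[1]$, contradicting the previous paragraph. Hence $L$ is not stably free, i.e.\ the tautological line bundle over $\proj R2$ is not stably trivial. I expect the main obstacle to be the computation of the clutching unitary in the third paragraph --- establishing that the restriction of $L$ to $C(D_{\mathcal T})$ becomes free over $C(S^1)$ with boundary value $1$, so that the clutching unitary is an \emph{odd} power of $z$; the remainder is Mayer--Vietoris bookkeeping on top of Lemmas~\ref{isolema} and~\ref{kdysk}.
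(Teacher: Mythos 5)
Your route is genuinely different from the paper's: the paper invokes the stable-triviality criterion of \cite{h-pm} to produce an invertible matrix over $\qcube$ whose first row lies in the odd part, pushes it through $\pi_1$ and $\sigma^\circ_1$, and contradicts $K_1(C(D_\mathcal{T}))=0$ with the odd winding number of a $\ztwo$-odd determinant; you instead present $\qcp 2\cong C(D_\mathcal{T})^+$ as the pullback of $C(D_\mathcal{T})\stackrel{\sigma^\circ_1}{\to}C(S^1)\hookleftarrow C(S^1)^{\ztwo}$ and run Mayer--Vietoris. That skeleton is fine: the identification $C(D_\mathcal{T})^\pm=(\sigma^\circ_1)^{-1}(C(S^1)^\pm)$, the computation $K_0(\qcp 2)\cong\B Z\oplus\ztwo$, and the placement $[L]-[1]\in\operatorname{im}\partial\cong\ztwo$ are correct, modulo two points you should not just wave at: the surjectivity of $\sigma^\circ_1$ is \emph{not} what is recorded in the proof of Lemma~\ref{kdysk} (that proof concerns the restrictions $C(D_\mathcal{T})\to\tplz_i$), so it needs its own short argument; and since $C(D_\mathcal{T})\neq C(D_\mathcal{T})^+\oplus C(D_\mathcal{T})^-$ (both parts contain $\ker\sigma^\circ_1$), the identification $L\otimes_{C(D_\mathcal{T})^+}C(S^1)^{\ztwo}\cong C(S^1)^-$ requires $(\ker\sigma^\circ_1)\,C(D_\mathcal{T})^-=\ker\sigma^\circ_1$ (e.g.\ Cohen factorisation), not merely $\sigma^\circ_1(C(D_\mathcal{T})^-)=C(S^1)^-$.

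The genuine gap is the decisive last step, where you read off the clutching unitary as $z^{\pm1}$ and conclude $[L]-[1]=\pm\partial([z])\neq0$. The boundary formula $[L]-[1]=\partial([w])$ holds only when $w$ compares trivialisations that are (stably) defined over the two legs: one must pick an isomorphism $M_1\oplus C(D_\mathcal{T})^r\cong C(D_\mathcal{T})^{r+1}$ (it exists since $[M_1]=[1]$) and compute $w$ through its reduction. What you use instead is the basis $\bar\xi$ of the fibre $M_1\otimes_{C(D_\mathcal{T})}C(S^1)$ coming from the summand $C(D_\mathcal{T})\xi\subseteq M_1$; this trivialises the fibre but is not the reduction of a (stable) trivialisation of $M_1$, because the complement $N$ need not vanish: $[N]=0$ in $K_0(C(D_\mathcal{T}))$ together with $N\otimes_{C(D_\mathcal{T})}C(S^1)=0$ does not force $N=0$ (for instance $(e,0,0)$, with $e$ a rank-one compact in $\tplz_0$, is a nonzero projection in $\ker\sigma^\circ_1$ of trivial $K_0$-class). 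The discrepancy between your fibre basis and a genuine stable trivialisation is the reduction $\sigma^\circ_1(Y)$ of a matrix $Y$ over $C(D_\mathcal{T})$ that is injective with cokernel $N$ but \emph{not} invertible over $C(D_\mathcal{T})$; its $K_1(C(S^1))$-class is controlled by the index map into $K_0(\ker\sigma^\circ_1)$, i.e.\ by $[N]$ there, and is not annihilated by $K_1(C(D_\mathcal{T}))=0$. Hence its winding parity --- equivalently, whether $[L]-[1]$ is the nonzero element --- is precisely what remains unproven; as written, your argument only shows $[L]-[1]\in\{0,\partial([z])\}$. To close it you would have to prove, e.g., that $M_1$ is (stably) free with a trivialisation whose boundary value has even winding, or pass, as the paper does, through a matrix that is invertible over $C(D_\mathcal{T})$ itself, since that invertibility is exactly what forces the even winding and produces the contradiction.
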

\begin{proof}
Suppose that $L$ is stably free. Since $\qcube$ is principal,
it follows from the stable triviality criterion \cite{h-pm} that there exists an invertible matrix $T\in M_n(C(S^2_{\B R\mathcal{T}}))$ whose first row has entries in $L\cong C(S^2_{\B R\mathcal{T}})^{-}$ and all other rows have entries in $C(\B R P^2_\mathcal{T})=C(S^2_{\B R\mathcal{T}})^{+}$. Next, let $T_1:=(\pi_1(T_{ij}))$ (see \eqref{pi+-} for $\pi_1$) be the corresponding invertible matrix over $C(D_\mathcal{T})$. Then, by Lemma~\ref{isolema}, the first row of $T_1$ has entries in 
$C(D_\mathcal{T})^-$ and all other rows have entries in~$C(D_\mathcal{T})^+$. Furthermore, applying 
$\sigma^\circ_1$
of \eqref{scirc} componentwise to $T_1$, we obtain an invertible matrix $T_2$ over~$C(S^1)$. 
It follows directly from the definition of $C(D_\mathcal{T})^\pm$ that the determinant of this matrix is a 
$\ztwo$-equivariant function, i.e.\ $\det(T_2)(-t)=-\det(T_2)(t)$. A standard topological argument shows that 
the winding number of such a function (normalized to a function from $S^1$ to $S^1$) is odd. Hence, the 
$K_1$ class of $T_2$ is odd. Furthermore, this class equals to $\sigma^\circ_{1*}
([T_1]_{K_1(C(D_\mathcal{T}))})$, which  contradicts the fact that $K_1(C(D_\mathcal{T}))=0$ (see 
Lemma~\ref{kdysk}).
\end{proof}

Consider now the obvious Hopf algebra surjection
$\pi\colon \mathcal{O}(U(1))\rightarrow C(\ztwo)$. This yields
the pro\-lon\-ga\-tion $\qcube\square^{C(\ztwo)}\mathcal{O}(U(1))$
(cf.~\cite{bh}). Since $\qcube$ is a principal $C(\ztwo)$-comodule
algebra,
it follows from
   \cite[Lemma~2.3]{bz12} that
\mbox{$\qcube\square^{C(\ztwo)}{\mathcal{O}}(U(1))$}  is  a  principal
${\mathcal{O}}(U(1))$-comodule algebra.
Furthermore, as 
$L:=C(S^2_{\B R\mathcal{T}})\square^{C(\ztwo)}\B C$ is not free
due to Theorem~\ref{nst}, we conclude that the $C(\ztwo)$-comodule algebra $\qcube$ is not cleft. Likewise, since 
\[
\qcube\square^{C(\ztwo)}\C\cong
\qcube\square^{C(\ztwo)}\mathcal{O}(U(1))\square^{\mathcal{O}(U(1))}\C,
\]  
we can view $L$ as a module associated to the $\mathcal{O}(U(1))$-comodule algebra $\qcube\square^{C(\ztwo)}\mathcal{O}(U(1))$.
Hence, the latter is also not cleft. Next, since the 
$\qcube$ and $\widetilde{\qcube}$ are isomorphic as $C(\ztwo)$-comodule algebras, and
the latter is a piecewise-trivial principal $C(\ztwo)$-comodule algebra
by construction, so is~$\qcube$.  Combining
this with Lemma~\ref{almostinv}
and the obvious fact that ${\mathcal{O}}(U(1))$ is a principal
$C(\ztwo)$-comodule algebra, we can apply Theorem~\ref{mainres}
to conclude that $\qcube\square^{C(\ztwo)}{\mathcal{O}}(U(1))$
admits a $\ker\pi$-reduction. Thus, we obtain a non-trivial illustration
of Theorem~\ref{mainres}: a non-cleft piecewise-trivial and reducible
principal comodule algebra.

\section{The irreducibility of a quantum-plane frame bundle}
\noindent
The aim of this section is to show that the frame bundle of the quantum
plane $\mathbb{C}_q$ 
is not reducible to an $SL_q(2)$-subbundle unless $q$ is a cubic root
of 1 \cite{hm98}. To this end, we will need: 

\begin{proposition}\label{trivred}
For a smash product $P=B\rtimes H$, the elements
 $f\in\alg^H_H(\lco,Z_P(B))$  are in bijective correspondence
with unital linear maps $\vartheta:\lco\rightarrow B$ satisfying,
 for all $k,l\in\lco$, $h\in H$, $b\in B$,
\begin{gather}\label{redprop}
\vartheta(kl)=\vartheta(l)\vartheta(k),\quad
b\vartheta(k)=\vartheta(k_{(1)})(k_{(2)}\triangleright b),\quad 
\vartheta(Sh_{(1)}kh_{(2)})=Sh\triangleright \vartheta(k).
\end{gather}
The correspondence is given explicitly by
\begin{equation}\label{redcor}
		  f \longmapsto \vartheta_f = (\mathrm{id}_B \otimes 
			\counit) \circ f,\quad
		\vartheta \longmapsto f_\vartheta =
		(\vartheta \otimes \mathrm{id}_H) \circ \comul.  	
\end{equation}
\end{proposition}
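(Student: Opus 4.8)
The plan is to dismantle the asserted bijection into independent layers: handle right $H$-colinearity first, then centrality, then multiplicativity together with unitality, and finally compatibility with the Miyashita--Ulbrich action, verifying at each layer that the condition imposed on $f$ corresponds precisely to the stated condition on $\vartheta$.

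First I would record the relevant structure of the smash product $P=B\rtimes H$: the coaction is $\id\otimes\Delta$, the coaction-invariant subalgebra is $B\otimes 1_H$, the map $j(h)=1\otimes h$ is a cleaving map with convolution inverse $j\circ S$, hence (cf.~\eqref{smashstrong}) the translation map is $h\o\otimes_B h\t=(1\otimes S(h\sw1))\otimes_B(1\otimes h\sw2)$, and the Miyashita--Ulbrich action restricts on $B\otimes 1_H$ to the given $H$-module algebra action $\triangleright$. By Lemma~\ref{jl}, $\lco$ carries the right $H$-coaction obtained by restricting $\Delta$. Since $P\cong B\otimes H$ is the cofree right $H$-comodule on the vector space $B$, a right $H$-colinear linear map $\lco\to P$ is freely determined by its composite with $\id_B\otimes\counit$; explicitly, $f\mapsto\vartheta_f=(\id_B\otimes\counit)\circ f$ and $\vartheta\mapsto f_\vartheta=(\vartheta\otimes\id_H)\circ\Delta$ are mutually inverse bijections between right $H$-colinear linear maps $\lco\to P$ and linear maps $\lco\to B$. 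I would record the two short checks: $\vartheta_{f_\vartheta}=\vartheta$ by a one-step counit computation, and $f_{\vartheta_f}=f$ using that colinearity of $f$ forces $f(k)=\vartheta_f(k\sw1)\otimes k\sw2$. In particular every $f\in\alg^H_H(\lco,Z_P(B))$ is of the form $f_\vartheta$ with $\vartheta=\vartheta_f$, and $f$ is unital precisely when $\vartheta(1)=1$.

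Next I would translate the remaining conditions, working with $f_\vartheta(k)=\vartheta(k\sw1)\otimes k\sw2$ and the multiplication $(a\otimes h)(b\otimes g)=a(h\sw1\triangleright b)\otimes h\sw2 g$ throughout. For \emph{centrality}: expanding $(b\otimes 1_H)f_\vartheta(k)$ and $f_\vartheta(k)(b\otimes 1_H)$ shows that $f_\vartheta(k)\in Z_P(B)$ for all $k$ is equivalent to $b\vartheta(k\sw1)\otimes k\sw2=\vartheta(k\sw1)(k\sw2\triangleright b)\otimes k\sw3$; applying $\id\otimes\counit$ yields $b\vartheta(k)=\vartheta(k\sw1)(k\sw2\triangleright b)$, and conversely this relation, applied to the leg $k\sw1$ and tensored with $k\sw2$, recovers the $\otimes$-form. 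For \emph{multiplicativity}: computing $f_\vartheta(k)f_\vartheta(l)$ and then using the centrality relation to move $\vartheta(l\sw1)$ past the action gives $f_\vartheta(k)f_\vartheta(l)=\vartheta(l\sw1)\vartheta(k\sw1)\otimes k\sw2 l\sw2$, whereas $f_\vartheta(kl)=\vartheta(k\sw1 l\sw1)\otimes k\sw2 l\sw2$ because $\Delta$ is an algebra map; hence, granting centrality, multiplicativity of $f_\vartheta$ is equivalent to $\vartheta(kl)=\vartheta(l)\vartheta(k)$ (apply $\id\otimes\counit$ one way, split the coproducts of $k$ and $l$ the other way).

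The delicate step is compatibility with the Miyashita--Ulbrich action, i.e.\ $f(S(h\sw1)kh\sw2)=h\o f(k)h\t$. Using the translation map above I would compute, on the one hand, $h\o f_\vartheta(k)h\t=(S(h\sw2)\triangleright\vartheta(k\sw1))\otimes S(h\sw1)k\sw2 h\sw3$, and on the other hand, after expanding $\Delta(S(h\sw1)kh\sw2)$, $f_\vartheta(S(h\sw1)kh\sw2)=\vartheta\bigl(S(h\sw2)k\sw1 h\sw3\bigr)\otimes S(h\sw1)k\sw2 h\sw4$. The point is that, writing $\Delta^{(2)}=(\Delta\otimes\id)\circ\Delta$, the inner argument $S(h\sw2)\,k\sw1\,h\sw3$ in the last display has the form $S(m\sw1)\,k\sw1\,m\sw2$ with $m$ the middle leg of $\Delta^{(2)}(h)$, so that the relation $\vartheta(S(h\sw1)kh\sw2)=S(h)\triangleright\vartheta(k)$ rewrites $\vartheta(S(h\sw2)k\sw1 h\sw3)$ as $S(h\sw2)\triangleright\vartheta(k\sw1)$ after a coassociativity relabelling, producing exactly $h\o f_\vartheta(k)h\t$; this yields the implication from the third relation to Miyashita--Ulbrich compatibility. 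For the converse, applying $\id_B\otimes\counit_H$ to the identity $f_\vartheta(S(h\sw1)kh\sw2)=h\o f_\vartheta(k)h\t$ and simplifying the right-hand side with $\counit(h\sw1)\counit(h\sw3)S(h\sw2)=S(h)$ and $\vartheta(k\sw1)\counit(k\sw2)=\vartheta(k)$ collapses it to $S(h)\triangleright\vartheta(k)$, while the left-hand side collapses to $\vartheta(S(h\sw1)kh\sw2)$, giving back the third relation. Assembling these equivalences with the colinear bijection of the second paragraph completes the argument. I expect the only genuine obstacle to be this last step: keeping straight which Sweedler legs of $h$ are split by the translation map versus by the coproduct inside $f_\vartheta$, and carrying out the coassociativity identification cleanly; everything else is a mechanical unwinding of definitions.
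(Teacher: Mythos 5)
Your proof is correct and takes essentially the same route as the paper's: use right $H$-colinearity to establish the bijection \eqref{redcor}, then translate centrality, multiplicativity (with centrality already in hand), and Miyashita--Ulbrich compatibility of $f$ into the three conditions on $\vartheta$, checking both directions. The only difference is one of detail: you write out explicitly the translation-map computation for the Miyashita--Ulbrich step and the converse implications, which the paper compresses into ``the last property follows from the invariance of $f$'' and ``using the above arguments backwards''.
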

\begin{proof}
The correspondence \eqref{redcor} can be proven 
using the right $H$-colinearity of $f$. Next,
put $D:=\lco$. 
Then $bf(k)=f(k)b$ for all $k\in D$ and $b\in B$. Explicitly,
\begin{equation}
bf(k)=b\vartheta(k\sw{1})\otimes k\sw{2}\quad\quad\text{and}\quad\quad
 f(k)b=\vartheta(k\sw{1})(k\sw{2}\triangleright b)\otimes k\sw{3}.
\end{equation}
Hence, the second equality in \eqref{redprop} follows. In order to prove the first one, we use 
the fact that $f$ is an algebra homomorphism. For any $k,l\in D$, 
we have $f(kl)=\vartheta(k\sw{1}l\sw{1})\otimes k\sw{2}l\sw{2}$. 
Furthermore,
\begin{equation}
f(kl)=f(k)f(l)=(\vartheta(k\sw{1})\otimes k\sw{2})(\vartheta(l\sw{1})\otimes l\sw{2})
=\vartheta(k\sw{1})(k\sw{2}\triangleright\vartheta(l\sw{1}))\otimes k\sw{3}l\sw{2}.
\end{equation}
Therefore,  the already proven second property of \eqref{redprop} 
and the fact that $\vartheta(l)\in B$ yield
\begin{equation}
\vartheta(kl)=\vartheta(k\sw{1})(k\sw{2}\triangleright\vartheta(l))=
\vartheta(l)\vartheta(k).
\end{equation}
Finally, the last property of $\vartheta$ follows from the 
invariance of $f$ with respect to the Miyashita--Ulbrich $H$-action.
We end this proof by noting that using the above arguments backwards 
shows that, if the map $\vartheta:D\rightarrow B$ satisfies 
\eqref{redprop}, then
the map $k\mapsto\vartheta(k\sw{1})\otimes k\sw{2}$
belongs to $\alg^H_H(\lco,Z_{B\rtimes H}(B))$.\hfill
\end{proof}

We are now ready to 
demonstrate that $B\rtimes H$, where $B=\mathcal{O}({\C}^2_q)$ and 
$H=\mathcal{O}(GL_{q}(2))$,
is not reducible to a principal $\mathcal{O}(SL_{q}(2))$-comodule algebra unless $q^3=1$.
Recall that
$\mathcal{O}({\C}^2_q)$ is defined as the unital associative algebra over ${\C}$ 
generated by $x,y$ with relations
\begin{equation}\label{cp2} 
xy=qyx,\quad q\in{\C}\setminus\{0\},
\end{equation}
and $\mathcal{O}(GL_{q}(2))$ is defined as the unital associative algebra over $\C$
generated by $a,b,c,d,D^{-1}$ with relations
\begin{gather}
ab=qba,\quad ac=qca,\quad bd=qdb,
\quad cd=qdc,\quad bc=cb,\quad ad=da+(q-q^{-1})bc,\label{rel1}\\
\label{rel3} 
(ad-qbc)D^{-1}=D^{-1}(ad-qbc)=1,
\end{gather}
where $q\in{\C}\setminus\{0\}$.
The Hopf algebra structure of $\mathcal{O}(GL_{q}(2))$ is defined in terms of the matrix
\[
\left(\begin{array}{cc}a & b \\ c & d \end{array}\right)
\]
of generators in the usual way.

There exists a well-defined left action of $\mathcal{O}(GL_{q}(2))$ on $\mathcal{O}({\C}^2_q)$ 
given by the formulas
\begin{gather}\label{wx} 
a\triangleright x=q^{-2}x, ~b\triangleright x=0, 
~c\triangleright x=(q^{-2}-1)y, ~d\triangleright x=q^{-1}x, ~D^{-1}\triangleright x=q^3x,
\\
\label{wy} 
a\triangleright y=q^{-1}y, ~b\triangleright y=0, ~c\triangleright y=0, ~d\triangleright y=q^{-2}y, ~D^{-1}
\triangleright y=q^3y.
\end{gather}
Denote by $\pi:\mathcal{O}(GL_{q}(2))\rightarrow \mathcal{O}(SL_{q}(2))$ 
the natural surjection sending $D$ to $1$.
Suppose that there exists a $\ker\pi$-reduction of $B\rtimes H$. 
It follows from Lemma~\ref{trivred}  that
there exists a unital and anti-algebra map
 $\vartheta:{}^{\co A(SL_{q}(2))}H\rightarrow B$.
In particular, as $D,D^{-1}\in{}^{\co A(SL_{q}(2))}H$ and
\begin{equation}
1=\vartheta(1)=\vartheta(DD^{-1})=\vartheta(D^{-1})\vartheta(D),\quad 
1=\vartheta(1)=\vartheta(D^{-1}D)=\vartheta(D)\vartheta(D^{-1}),
\end{equation}
we obtain
 that $\vartheta(D^{-1})$ is an invertible element of $B=\mathcal{O}({\C}^2_q)$.
Since the only invertible elements of $\mathcal{O}({\C}^2_q)$ are multiples of 
identity, we conclude 
that $\vartheta(D^{-1})=\mu 1_B$, with $0\neq \mu\in \C$.
Furthermore, from Lemma~\ref{trivred} and~\eqref{wx}, we obtain
 that
\begin{equation}
\mu x=x\vartheta(D^{-1})=\vartheta(D^{-1})(D^{-1}\triangleright x)
=q^3\mu x,
\end{equation}
so  $q^{3}=1$, as claimed. 

\noindent{\bf Acknowledgements:}
 The authors are grateful to Tomasz Brzezi\'nski for many helpful discussions. They  also 
thank the Department of Mathematics, Swansea University, for hospitality.
This work is part of the EU-project {\sl Geometry and
symmetry of quantum spaces} PIRSES-GA-2008-230836.
 It was  partially supported by the
Polish Government grants N201 1770 33  and 1261/7.PRUE/2009/7.

%\nocite{*}
%
%\bibliographystyle{acm}
%\bibliography{reduct}

\end{document}